\numberwithin{equation}{section}
\newtheorem{theo}{Theorem} 
\newtheorem{mainprop}[theo]{Proposition} 
\newtheorem{lemma}{Lemma}[section]
\newtheorem{prop}[lemma]{Proposition}
\newtheorem{corol}[lemma]{Corollary}
\newtheorem{claim}[lemma]{Claim}
\theoremstyle{remark}
\newtheorem{remark}[lemma]{Remark}
\newtheorem{step}{Step}
\theoremstyle{definition}
\newtheorem{defi}[lemma]{Definition}
\newcommand{\hdot}{\dot{H}^1}
\newcommand{\NN}{\mathbb{N}}
\newcommand{\RR}{\mathbb{R}}
\newcommand{\eps}{\varepsilon}
\newcommand{\tu}{\tilde{u}}
\newcommand{\EEE}{\mathcal{E}}
\newcommand{\HHH}{\mathcal{H}}
\newcommand{\LLL}{\mathcal{L}}
\newcommand{\PPP}{\mathcal{P}}
\newcommand{\indic}{1\!\!1}
\newcommand{\ent}[1]{\left\lfloor #1 \right\rfloor} 
\DeclareMathOperator{\vect}{span}
\DeclareMathOperator{\rad}{rad}
\title[Nonradiative solutions of critical wave equations]{Decay estimates for nonradiative solutions of the energy-critical focusing wave equation}
\author[T.~Duyckaerts]{Thomas Duyckaerts$^1$}
\author[C.~Kenig]{Carlos Kenig$^2$}
\author[F.~Merle]{Frank Merle$^3$}
\thanks{$^1$LAGA (UMR 7539), Universit\'e Paris 13, Sorbonne Paris Cit\'e, and Institut Universitaire de France}
\thanks{$^2$University of Chicago. Partially supported by NSF Grants DMS-14363746 and DMS-1800082}
\thanks{$^3$Cergy-Pontoise (UMR 8088), Institut des Hautes \'Etudes Scientifiques}
\keywords{Focusing wave equation, soliton}
\begin{document}

\begin{abstract}
 Consider the energy-critical focusing wave equation in space dimension $N\geq 3$. The equation has a nonzero radial stationary solution $W$, which is unique up to scaling and sign change. It is conjectured (soliton resolution) that any radial, bounded in the energy norm  solution of the equation behaves asymptotically as a sum of modulated $W$s, decoupled by the scaling, and a radiation term. A \emph{nonradiative} solution of the equation is by definition a solution whose energy in the exterior $\{|x|>|t|\}$ of the wave cone vanishes asymptotically as $t\to +\infty$ and $t\to -\infty$. In our previous work \cite{DuKeMe13}, we have proved that the only radial nonradiative solutions of the equation in three space dimensions are, up to scaling, $0$ and $\pm W$. This was crucial in the proof of soliton resolution in \cite{DuKeMe13}. 

In this paper, we prove that the initial data of a radial nonradiative solution in odd space dimension have a prescribed asymptotic behaviour as $r\to \infty$. We will use this property for the proof of soliton resolution, for radial data, in all odd space dimensions. The proof uses the characterization of nonradiative solutions of the linear wave equation in odd space dimensions obtained by Lawrie, Liu, Schlag and the second author in \cite{KeLaLiSc15}. 

We also study the propagation of the support of nonzero radial solutions with compactly supported initial data, and prove that these solutions cannot be nonradiative.
\end{abstract}
\maketitle
\tableofcontents

\section{Introduction}
Consider the wave equation on $\RR^N$, $N\geq 3$, with an energy-critical focusing nonlinearity:
\begin{equation}
 \label{NLW}
 \partial_t^2u-\Delta u=|u|^{\frac{4}{N-2}}u,
\end{equation} 
and initial data
\begin{equation}
 \label{ID}
 \vec{u}_{\restriction t=0}=(u_0,u_1)\in \HHH,
\end{equation} 
where $\vec{u}:=(u,\partial_t u)$, $\HHH:=\hdot(\RR^N)\times L^2(\RR^N)$, where $\hdot(\RR^N)$ is the usual homogeneous Sobolev space. We will mainly consider spherically symmetric solutions, and denote by $\HHH_{\rad}$ the subspace of radial elements of $\HHH$

The equation is locally well-posed in $\HHH$ (see e.g. \cite{GiSoVe92,LiSo95,KeMe08,BuCzLiPaZh13}). It has the following scaling invariance: if $u$ is a solution of \eqref{NLW} and $\lambda>0$, then 
$$\lambda^{\frac{N}{2}-1} u\left( \lambda t,\lambda x\right)$$
 is also a solution. The energy:
$$E(\vec{u}(t))=\frac{1}{2}\int_{\RR^N} |\nabla_{t,x}u(t,x)|^2\,dx-\frac{N-2}{2N}\int_{\RR^N} |u(t,x)|^{\frac{2N}{N-2}}\,dx,$$
where $\nabla_{t,x} u=(\partial_t,\partial_{x_1}u,\ldots,\partial_{x_N}u)$
is conserved, and invariant by the scaling of the equation

%
%
The equation \eqref{NLW} has stationary solutions. Indeed, the set of radial stationary solutions is exactly 
$$\{0\}\cup\left\{\pm\lambda^{\frac{N}{2}-1} W(\lambda x),\lambda>0\right\}, $$
where 
$$W(x)=\left( 1+\frac{|x|^2}{N(N-2)} \right)^{\frac{N}{2}-1}.$$
There also exist type I blow-up solutions, i.e. solutions whose maximal time of existence $T_+$ is finite and such that 
$$\lim_{t\to T_+}\|\vec{u}(t)\|_{\HHH}=+\infty.$$
The \emph{soliton resolution conjecture} predicts that every solution $\vec{u}$ of \eqref{NLW}, that is not a type I blow-up solution, decomposes asympotically as $t\to T_+$, up to a term which is negligible in $\HHH$, as a finite sum of stationary solutions, decoupled by time dependent scaling parameters, 
and a radiation term. The radiation term is a solution of the linear wave equation if $T_+=+\infty$ or a fixed element of $\HHH$ if $T_+<\infty$. 

This conjecture was proved by the authors in \cite{DuKeMe13} in space dimension $3$ in the radial setting. 
A key step of the proof is the
following rigidity theorem (see Propositions 2.1 and 2.2 in \cite{DuKeMe13}): let $u$ be a radial solution of \eqref{NLW} with $N=3$ which is global (to fix ideas), and such that
\begin{equation}
 \label{nonradi}
 \sum_{\pm}\lim_{t\to \pm\infty}\int_{|x|>|t|}|\nabla_{t,x}u(t,x)|^2\,dx=0
\end{equation} 
(we will call such solutions \emph{nonradiative}),
then $u$ is stationary, i.e. $u(t,x)=0$ or $u(t,x)=\pm \lambda^{\frac{N}{2}-1}W(\lambda x)$. 

It is essentially proved in \cite[Lemma 3.11]{DuKeMe13} that if $u$ is a solution of \eqref{NLW} with maximal time of existence $T_+$ and $t_n\to T_+$ is such that $\vec{u}(t_n)$ is bounded in $\HHH$, then $u$ can be expanded, in a neighborhood of $t_n$, as a sum of nonradiative solutions decoupled by scaling, and thus the classification of nonradiative solutions is crucial in the proof of the soliton resolution conjecture for equation \eqref{NLW}.

We note that an analog of the rigidity theorem is known for the linear wave equation:
\begin{equation}
 \label{LW}
 \partial_t^2u-\Delta u=0.
\end{equation} 
Namely, by \cite{DuKeMe12}, if $u$ is a solution of \eqref{LW} and $N$  is odd,
\begin{equation}
 \label{equirepartition}
\sum_{\pm}\lim_{t\to \pm\infty}\int_{|x|>|t|}|\nabla_{t,x}u(t,x)|^2\,dx=\int |\nabla_{t,x}u(0,x)|^2\,dx.
 \end{equation} 
In particular, if $u$ satisfies \eqref{nonradi}, then $u=0$.

According to \cite{CoKeSc14}, \eqref{equirepartition} is not valid for solutions of the linear wave equation in even space dimension, even in the radial case. In this paper we will show however, as a consequence of \cite{CoKeSc14}, that the rigidity theorem for linear solutions is also valid when $N$ is even.
\begin{mainprop}
 \label{P:Neven}
Let $u$ be a solution of \eqref{LW} with $N\geq 3$. Assume \eqref{nonradi}. Then $u$ is the constant zero solution.
 \end{mainprop}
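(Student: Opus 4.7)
The plan is to treat the cases $N$ odd and $N$ even separately. When $N$ is odd, the identity \eqref{equirepartition} recalled from \cite{DuKeMe12} gives the result at once: if both one-sided limits in \eqref{nonradi} vanish then $\int|\nabla_{t,x}u(0,x)|^2\,dx=0$, so $u\equiv 0$. The substantive case is therefore $N$ even, where I rely on \cite{CoKeSc14}.

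For even $N$, first reduce to the radial case via spherical harmonics. Decomposing $(u_0,u_1)=\sum_{\ell\geq 0}(u_{0,\ell},u_{1,\ell})$ with $(u_{j,\ell})$ in degree-$\ell$ harmonics, and letting $u_\ell$ be the associated solution of \eqref{LW}, orthogonality on $S^{N-1}$ gives
\[
\int_{|x|>|t|}|\nabla_{t,x}u|^2\,dx=\sum_{\ell\geq 0}\int_{|x|>|t|}|\nabla_{t,x}u_\ell|^2\,dx,
\]
so each $u_\ell$ is itself nonradiative and it suffices to show $u_\ell\equiv 0$ for every $\ell\geq 0$. The case $\ell=0$ is an immediate application of \cite{CoKeSc14} for radial data in even dimension.

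For $\ell\geq 1$, reduce $u_\ell$ to a purely radial problem in a higher even dimension. Writing $u_\ell(t,x)=r^\ell h(t,r)Y_\ell(\omega)$ with $r=|x|$, a direct computation (using that $\alpha(\alpha+N-2)=\ell(\ell+N-2)$ for $\alpha=\ell$ absorbs the angular potential) shows that $h$ solves the radial linear wave equation in dimension $\tilde N:=N+2\ell$. Expanding the energy density in spherical coordinates and integrating the cross term $r^{2\ell-1}h\,h_r$ by parts in $r$ yields an identity of the form
\[
\int_{|x|>|t|}|\nabla_{t,x}u_\ell|^2\,dx=c_1\int_{|y|>|t|}|\nabla_{t,y}h|^2\,dy-c_2\,\ell\,|t|^{\tilde N-2}h(t,|t|)^2,\qquad c_1,c_2>0.
\]
Since both exterior cone energies are non-negative and monotone non-increasing in $|t|$, both terms on the right-hand side have finite limits as $t\to\pm\infty$, linked by the identity and by the nonradiativity of $u_\ell$. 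A radiation-field analysis for $h\in\dot H^1\times L^2$ (which yields the expected $r^{-(\tilde N-1)/2}$ decay along the light cone) forces the boundary term $|t|^{\tilde N-2}h(t,|t|)^2$ to tend to $0$, so $h$ is itself nonradiative in dimension $\tilde N$. Invoking \cite{CoKeSc14} in the even dimension $\tilde N$ then gives $h\equiv 0$, hence $u_\ell\equiv 0$, and summing over $\ell$ yields $u\equiv 0$.

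The main obstacle is the boundary term $\ell|t|^{\tilde N-2}h(t,|t|)^2$ produced by the integration by parts: at $\dot H^1$ scaling this quantity is exactly at critical size and does not vanish automatically, so the reduction from "$u_\ell$ nonradiative in dimension $N$" to "$h$ nonradiative in dimension $\tilde N$" is not purely algebraic and must use genuine linear information on $h$ — either a pointwise radiation-field estimate on the light cone, or a version of the lower bound of \cite{CoKeSc14} robust enough to absorb the boundary contribution directly. A secondary issue is that the estimate of \cite{CoKeSc14} may come with a finite-dimensional obstruction subspace which, in all even dimensions $\tilde N\geq 4$, must be checked to contribute nothing once combined with the identity above.
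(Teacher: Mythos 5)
Your overall architecture (odd $N$ via \eqref{equirepartition}, even $N$ via spherical harmonics plus the radial substitution $h=r^{-\ell}u_\ell$ placing the problem in dimension $\tilde N=N+2\ell$) is the same as the paper's, but the two places where you defer to \cite{CoKeSc14} are exactly where the real work lies, and as written the argument has a genuine gap there. The radial case in even dimension is \emph{not} an immediate application of \cite{CoKeSc14}: that paper shows that the odd-dimensional exterior-energy \emph{lower bound} fails in even dimensions (even for radial data), and what it supplies is only the exact asymptotic formula \eqref{ext_energy}, in which the exterior energy equals a positive term plus a sign-indefinite correction built from the Hankel transform $H\varphi(\rho)=\int_0^\infty\varphi(\sigma)(\rho+\sigma)^{-1}d\sigma$. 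To conclude that the asymptotic exterior energy is strictly positive for every nonzero radial datum one must prove the strict operator bound \eqref{boundH}, i.e.\ that $H$ has norm $\pi$ on $L^2(0,\infty)$ and that this norm is not attained; the paper does this by factoring $H=\LLL^2$ through the Laplace transform and running an equality analysis in Cauchy--Schwarz (equality would force $f(t)=\lambda t^{-1/2}\notin L^2$). Your proposal never supplies this ingredient — your closing remark about a possible ``finite-dimensional obstruction subspace'' is a symptom of the missing step — so the case $\ell=0$, and likewise the final invocation of \cite{CoKeSc14} in dimension $\tilde N$, is not justified as stated.

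The second gap you identify yourself: your integration by parts produces the pointwise boundary term $\ell\,|t|^{\tilde N-2}h(t,|t|)^2$, and you do not control it. Note that this term can be avoided altogether: instead of integrating the cross term by parts, estimate it by Cauchy--Schwarz (or Young) against $r^{2\ell-2}h^2\,r^{N-1}=|u_\ell|^2|x|^{-2}\,r^{N-1}$, and use the standard fact that $\lim_{t\to\pm\infty}\int_{\RR^N}|x|^{-2}|u(t,x)|^2dx=0$ for finite-energy solutions of \eqref{LW}; this is how the paper passes between the exterior energies of $u_\ell$ in dimension $N$ and of $h$ in dimension $\tilde N$ (in the contrapositive direction: positivity for $h$ forces positivity for $u_\ell$). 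With that substitution your reduction step closes, but the even-dimensional radial rigidity still requires the strict Hankel-transform inequality described above, which is the heart of the paper's proof and must be added for your argument to be complete.
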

The proof of the rigidity theorem in space dimension $3$ for the nonlinear wave equation relies on an stronger exterior energy bound than \eqref{equirepartition}, proved in \cite{DuKeMe11a}, and which holds only for radial solutions. Assume $N=3$. Let $R>0$ and $u$ be a finite energy solution of \eqref{LW}. Then
\begin{equation}
 \label{linear_exterior}
 \sum_{\pm} \lim_{t\to\pm\infty} \int_{|x|>R+|t|} |\nabla_{t,x}u(t,x)|^2\,dx=\left\| \pi_{P(R)^{\bot}}\vec{u}(0)\right\|_{\HHH(R)}^2,
\end{equation} 
where $\HHH(R)$ is the space of restrictions of elements of $\HHH_{\rad}$ to $r\in [R,\infty)$, $P(R)=\vect\left\{(\frac{1}{r},0)\right\}$, and $\pi_{P(R)^{\bot}}$ is the orthogonal projection, in $\HHH(R)$, of the orthogonal of $P(R)$ in $\HHH(R)$. In the proof of the rigidity theorem in \cite{DuKeMe13}, it is used in a crucial way that $P(R)$ is of dimension $1$, and that the decay, as $r\to\infty$ of $1/r$ and $W(r)$ are of the same order at infinity.

An analog of the exterior energy bound mentioned above was proved in all odd space dimensions in \cite{KeLaLiSc15}. Namely, assume that $N\geq 3$ is odd, and let $P(R)$ be the subspace of $\HHH(R)$ spanned by
$$\PPP=\left\{\left(\frac{1}{r^{N-2k_1}},0\right),\left(0, \frac{1}{r^{N-2k_2}}\right),\quad 1\leq k_1\leq \ent{\frac{N+2}{4}},\; 1\leq k_2\leq \ent{\frac{N}{4}}\right\},$$
where $\ent{a}$ denotes the integer part of $a$. Then \eqref{linear_exterior} holds for all radial solutions $u_L$ of the linear wave equation \eqref{LW}. Note that 
$$P(R)=\left\{(u_0,u_1)\in \HHH(R),\quad \exists k\geq 1, \quad \Delta^k u_0=\Delta^ku_1=0\right\}.$$
The solutions of the linear wave equation \eqref{LW}, with initial data in $P(R)$ that are well-defined for $|x|>R+|t|$, can be computed explicitely, and one can check easily that for these solutions:
\begin{equation}
\label{nonradiR}
\lim_{t\to+\infty}\int_{|x|>R+|t|}|\nabla_{t,x}u(t,x)|^2\,dx=0. 
\end{equation} 
The space $P(R)$ is of dimension $\frac{N-1}{2}>1$ if $N\geq 5$, and for this reason, the proof of the rigidity theorem breaks down in these higher space dimensions. Note however that the exterior energy bound \eqref{linear_exterior} was used by C. Rodriguez \cite{Rodriguez16} to prove that the radial soliton resolution holds for a sequence of times in odd space dimensions.

In this paper, we show that the radial nonradiative solutions of \eqref{NLW} in odd space dimensions have very specific properties. Indeed, the initial data $(u_0,u_1)(r)$ of any radial solution $u$ of \eqref{NLW} that satisfies \eqref{nonradiR} for some $R$, has the same decay, for large $r$, as an element of $P(R)$. We also prove that the only radial nonradiative solutions of \eqref{NLW} in odd space dimension such that $(u_0,u_1)$ behaves like $(1/r^{N-2},0)$ at infinity are the stationary solutions $\pm \lambda^{\frac{N}{2}-1}W(\lambda x)$. Unfortunately, we were not able to prove the complete rigidity theorem, i.e. to exclude nonradiative solutions that behave at infinity as other elements of $P(R)$ than $(1/r^{N-2},0)$. However the results of this paper will be used, in a subsequent paper \cite{DuKeMe19Pb}, to prove the soliton resolution conjecture in all odd space dimensions. To state our main results, denote $\PPP=(\Xi_{k})_{1\leq k\leq m}$, where $m=\frac{N-1}{2}$ and $\Xi_k$ is one of the generators of $P(R)$, so that
$$ \left\|\Xi_k\right\|_{\HHH(R)}=\frac{c_k}{R^{k-\frac{1}{2}}}.$$
In particular, $\Xi_{m}=(\frac{1}{r^{N-2}},0)$. 

Adapting the well-posedness theory from \cite{GiVe95}, \cite{KeMe08} and \cite{BuCzLiPaZh13}, and using finite speed of propagation, we will work on solutions that are defined in the exterior of wave cones $\{|x|>R+|t|\}$. We refer to Section \ref{S:preliminaries} for the precise definition of these solutions.
\begin{theo} 
\label{T:maintheo}
Assume that $N\geq 5$ is odd.
Let $R_0>0$ and $u$ be a solution of \eqref{NLW} defined on $\{|x|>R_0+|t|\}$ such that 
\begin{equation}
\label{weaknonradi}
\sum_{\pm}\lim_{t\to\pm\infty}\int_{|x|>R_0+|t|}|\nabla_{t,x}u(t,x)|^2\,dx=0.
\end{equation} 
Then there exists $R_1\gg 1$, $k_0\in\llbracket 1,m\rrbracket$ and $\ell\in \RR$, with $\ell\neq 0$ if $k_0<m$ and such that, for all $t$,
\begin{equation*}
 \label{asymptotic_NR_intro}
 \forall R>R_1+|t|,\quad
\left\|\vec{u}(t)-\ell \,\Xi_{k_0}\right\|_{\HHH(R)}\lesssim \max\left(\frac{1}{R^{(k_0-\frac{1}2)\frac{N+2}{N-2}}},\frac{1}{R^{k_0+\frac 12}}\right),
\end{equation*} 
where the implicit constant depends on $u$.
\end{theo}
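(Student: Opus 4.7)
Let $\vec{u}_L(t)$ denote the linear evolution of $\vec{u}(0)$, so that by Duhamel
$\vec{u}(t) = \vec{u}_L(t) + \int_0^t \vec{S}(t-s)\bigl(0,|u|^{4/(N-2)}u(s)\bigr)\,ds$
on $\{|x|>R_0+|t|\}$, this being well defined by finite speed of propagation. Applying the linear exterior energy identity \eqref{linear_exterior} from \cite{KeLaLiSc15} to $\vec{u}_L$ at scale $R$, and combining with the nonradiative hypothesis \eqref{weaknonradi} and Minkowski's inequality to control the exterior energy of the Duhamel correction at $t=\pm\infty$, I obtain for $R>R_0$
\begin{equation}\label{key_ineq}
\bigl\|\pi_{P(R)^\perp}\vec{u}(0)\bigr\|_{\HHH(R)} \lesssim \int_\RR\bigl\||u|^{(N+2)/(N-2)}(s)\bigr\|_{L^2(|x|>R+|s|)}\,ds.
\end{equation}

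\textbf{Bootstrap for the decay of $\vec{u}(0)$.}
Using the radial Sobolev inequality $|u(t,r)|\lesssim r^{-(N-2)/2}\|\vec{u}(t)\|_{\HHH(r)}$, the working hypothesis $\|\vec{u}(t)\|_{\HHH(r)} \lesssim r^{-\alpha}$ for $r>R_1+|t|$ turns \eqref{key_ineq} into
\[
\bigl\|\pi_{P(R)^\perp}\vec{u}(0)\bigr\|_{\HHH(R)} \lesssim R^{-\alpha\frac{N+2}{N-2}},
\]
as one verifies by integrating $r^{-[(N-2)+2\alpha](N+2)/(N-2)+N-1}$ in $r$ over $(R+|s|,\infty)$ and then $s$ over $\RR$ (both integrals converge whenever $\alpha>0$, and the $s$-integration neutralizes the extra $(R+|s|)^{-1}$). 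Since $(N+2)/(N-2)>1$, each pass strictly improves the exponent in the working hypothesis.

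\textbf{Decomposition and identification of the leading term.}
Decompose $\vec{u}(0)|_{\{r>R\}} = \sum_{k=1}^{m}\ell_k(R)\Xi_k+\vec{\epsilon}(R)$ orthogonally in $\HHH(R)$. Step 2 controls $\|\vec{\epsilon}(R)\|_{\HHH(R)}$, while $\|\Xi_k\|_{\HHH(R)}\sim R^{-(k-1/2)}$ pins the possible plateau scales of the bootstrap at $\alpha=k-\tfrac{1}{2}$. Combined with the fact that each $\Xi_k$ is the same function in $r$ on every annulus, this forces the $\ell_k(R)$ to stabilize as $R\to\infty$ to constants $\ell_k$; define $k_0\in\llbracket 1,m\rrbracket$ as the smallest index with $\ell_{k_0}\neq 0$, and set $k_0=m$, $\ell=0$ when no such index exists. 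At $\alpha=k_0-\tfrac{1}{2}$ the bootstrap terminates, and the total remainder has the two contributions stated in the theorem: $R^{-(k_0-1/2)(N+2)/(N-2)}$ from the nonlinear correction in \eqref{key_ineq}, and $R^{-(k_0+1/2)}$ from the next basis vector $\Xi_{k_0+1}$ in the decomposition when $k_0<m$. Their maximum gives the announced error.

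\textbf{Time dependence and main obstacle.}
For general $t$, I apply the same scheme with $\vec{u}(t_0)$ in place of $\vec{u}(0)$ and the translated cone $\{|x|>R_0+|s-t_0|\}$; the identity \eqref{linear_exterior} and the hypothesis \eqref{weaknonradi} are translation-invariant. The values of $\ell$ and $k_0$ are independent of $t_0$ because the linear evolution of $\ell\,\Xi_{k_0}$ is an exact nonradiative linear solution preserving the leading term of its asymptotic at spatial infinity, while the nonlinear error is strictly subdominant. \emph{The main obstacle} is to execute the bootstrap with uniform-in-$t$ control of the $\HHH(r)$-decay of $\vec{u}(t)$, and to certify that the projection coefficients $\ell_k(R)$ genuinely stabilize as $R\to\infty$ at a rate compatible with the claimed error bound. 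This requires reconciling the orthogonal decompositions at two scales $R<R'$ using the explicit algebraic form of the $\Xi_k$'s; once this is done, matching the bootstrap's stopping exponent with the size of the first nonvanishing $\Xi$-component and with the nonlinear gain yields the two-scale error bound of the statement.
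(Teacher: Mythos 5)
Your Step 1 is essentially the paper's starting point: by small-data theory outside the cone, Strichartz/energy estimates and the exterior energy identity of \cite{KeLaLiSc15}, one gets $\|\pi_{P(R)^{\bot}}\vec u(0)\|_{\HHH(R)}\lesssim \|\vec u(0)\|_{\HHH(R)}^{\frac{N+2}{N-2}}$ (your \eqref{key_ineq} with the radial Sobolev computation gives the same superlinear gain). But from there the proposal has a genuine gap, which you yourself flag as ``the main obstacle'': the bootstrap as you set it up does not close. Your working hypothesis is a bound on the \emph{full} norm $\|\vec u(t)\|_{\HHH(r)}\lesssim r^{-\alpha}$ (uniformly in $t$), while each pass of \eqref{key_ineq} only improves the \emph{perpendicular} component $\pi_{P(R)^{\bot}}\vec u(0)$; the component along $P(R)$ does not improve (indeed at the end $\vec u(0)\approx\ell\,\Xi_{k_0}$ and the full norm never decays faster than $R^{-(k_0-\frac12)}$). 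So the claim ``each pass strictly improves the exponent in the working hypothesis'' is not justified, and the assertion that the coefficients $\ell_k(R)$ ``stabilize'' as $R\to\infty$, at a rate compatible with the two exponents in the statement, is exactly the hard analytic content that is missing. The difficulty is that the $\HHH(R)$-orthogonal decomposition changes with $R$, so the coordinates $\theta_k(R)$ of $\pi_{P(R)}\vec u(0)$ in the basis $(\Xi_k)$ could a priori drift or grow.

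The paper resolves this by working only with the data (no uniform-in-$t$ hypothesis): it derives from the superlinear bound a self-improving recursive inequality comparing the coordinates at two nearby scales, $\sum_k|\theta_k(R)-\theta_k(R')|R^{-(k-\frac12)}\lesssim\big(\sum_k|\theta_k(R)|R^{-(k-\frac12)}\big)^{\frac{N+2}{N-2}}$ for $R\leq R'\leq 2R$, and then runs a dyadic iteration ($R=2^nR_\eps$) combined with an elementary lemma on sequences with geometric growth and an induction on $k_0$: either $\theta_1(R)$ converges to a nonzero limit, or one shows it converges to $0$ fast, upgrades the decay of the lower coefficients, and moves to the next index. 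This is what produces the precise exponents $(k_0-\frac12)\frac{N+2}{N-2}$ and $k_0+\frac12$ and the dichotomy $\ell\neq0$ or $k_0=m$. Separately, the independence of $(k_0,\ell)$ of the time $t$ is not automatic from ``$\Xi_{k_0}$ evolves nonradiatively''; the paper proves it by comparing $\|\vec u(T)\|_{\HHH(R+|T|)}$ with $\|\vec u(0)\|_{\HHH(R)}$ via the monotonicity of the free exterior energy and the small-data perturbation bound, applied both forward and backward in time, and then letting $R\to\infty$ against the asymptotics already established. Without these two ingredients (the multi-scale coefficient analysis and the two-sided time comparison), the statement is not proved.
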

\begin{theo}
\label{T:maintheo2}
Let $u$ be as in the preceding theorem, and assume furthermore $k_0=m$. Then $u$ coincides with a stationary solution for large $r$. 
Furthermore, if $u$ is defined on $\{|x|>|t|\}$ and \eqref{weaknonradi} holds with $R_0=0$, then $u$ is stationary.
\end{theo}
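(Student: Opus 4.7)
The plan is to bootstrap Theorem \ref{T:maintheo} into pointwise (rather than merely asymptotic) equality with a stationary solution in the far exterior. Applying Theorem \ref{T:maintheo} with $k_0 = m$ produces $\ell \in \RR$ and $R_1 \gg 1$ such that
\[
\left\|\vec u(t) - \ell\, \Xi_m\right\|_{\HHH(R)} \lesssim R^{-N/2}, \quad R > R_1 + |t|.
\]
Because $\Xi_m = (1/r^{N-2}, 0)$ encodes exactly the leading asymptotic of the ground state, one can choose a sign $\eps \in \{-1, 0, +1\}$ (equal to $0$ if $\ell = 0$) and a scale $\lambda > 0$ such that $Z(r) := \eps\, \lambda^{(N-2)/2} W(\lambda r)$ satisfies $Z(r) - \ell/r^{N-2} = O(r^{-N})$ as $r \to \infty$. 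Writing $\vec Z := (Z, 0)$ and using this expansion, the above bound becomes $\|\vec u(t) - \vec Z\|_{\HHH(R)} \lesssim R^{-N/2}$.

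To close the first assertion, set $w := u - Z$, a solution of the perturbed wave equation
\[
\partial_t^2 w - \Delta w - F'(Z)\, w = \mathcal R(Z, w), \qquad F'(Z) \sim c\, r^{-4},
\]
where $F(u) = |u|^{4/(N-2)} u$ and $\mathcal R$ is at least quadratic in $w$. The nonradiative property transfers from $u$ to $w$ because $\int_{|x|>R+|t|}|\nabla Z|^2\,dx \sim (R+|t|)^{-(N-2)}$ vanishes as $R+|t| \to \infty$. Moreover, the bound $\|\vec w(t)\|_{\HHH(R)} \lesssim R^{-N/2}$ is strictly smaller than the size $c_k R^{-(k-1/2)}$ of every generator $\Xi_k$ with $1 \leq k \leq m$, since $N/2 > k - 1/2$ in that range. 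I then intend to rerun the channels-of-energy analysis underlying Theorem \ref{T:maintheo} on the equation for $w$: the linear exterior estimate of \cite{KeLaLiSc15} is stable under the rapidly decaying perturbation $F'(Z)$, so the decomposition along $P(R)$ still applies, and any candidate leading coefficient $\ell'$ along some $\Xi_{k'}$ is forced to vanish by the a priori bound $R^{-N/2}$. Iterating this improvement yields $\vec w(t) \equiv 0$ on $\{|x| > R_1 + |t|\}$.

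For the ``furthermore'' statement, suppose $u$ is defined on all of $\{|x| > |t|\}$ and \eqref{weaknonradi} holds with $R_0 = 0$. The first assertion, applied on the full exterior cone, furnishes a stationary $Z$ and $R_1$ with $u(t,x) = Z(x)$ for $|x| > R_1 + |t|$. The difference $v := u - Z$ therefore has initial data supported in $\{r \leq R_1\}$, and by finite speed of propagation $v(t,\cdot)$ is compactly supported for every $t$. Since both $u$ and $Z$ satisfy $\int_{|x|>|t|}|\nabla \cdot|^2\,dx \to 0$, so does $v$. Invoking the support-propagation result announced in the abstract --- that nonzero radial solutions with compactly supported initial data cannot be nonradiative, applied here to the equation for $v$ (the perturbation by $F'(Z)$ is smooth and decays like $r^{-4}$, so the argument carries over) --- forces $v \equiv 0$, and therefore $u = Z$ identically.

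The main obstacle lies in the second paragraph: the exterior energy estimate \eqref{linear_exterior} is phrased for the free wave equation, and one has to extend it to the operator $\partial_t^2 - \Delta - F'(Z)$ in a form compatible with the projection onto $P(R)$. The rapid decay $F'(Z) \sim r^{-4}$ should render this a perturbation that can be absorbed at large $r$, but the argument must be executed carefully, and combined with the channels-of-energy iteration to rule out each $\Xi_{k'}$ in turn. A secondary hurdle is the transport of the support-propagation theorem to the perturbed equation satisfied by $v$ in the final step; the compact support of $v$ and the smoothness and rapid decay of the background $Z$ should ensure that the same proof applies, but this needs to be verified explicitly.
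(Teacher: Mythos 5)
Your second ("furthermore") step is essentially the paper's own argument (Corollary \ref{C:uniqueness_NR}): after the first assertion, $v=u-Z$ has compactly supported data, is nonradiative, and solves a wave equation with a potential $V=(F(Z+v)-F(Z))/v$ bounded by $C/r^2$ via the radial Sobolev inequality, so the support-propagation result (Proposition \ref{P:support}, already stated for arbitrary potentials with $C/r^2$ decay) applies directly; note that writing the equation through $F'(Z)$ plus a remainder "at least quadratic in $w$" is not quite legitimate for $N\geq 7$ where $F$ is not twice differentiable, but the quotient formulation fixes this. The genuine gap is in the first assertion. First, the tool you propose — an exterior energy lower bound for the operator $\partial_t^2-\Delta-F'(Z)$ "compatible with the projection onto $P(R)$" — cannot hold verbatim: $\Lambda W:=\frac{N-2}{2}W+r\partial_r W$ is a static, hence weakly nonradiative, solution of the linearized equation whose data has nonzero projection onto $P(R)^{\bot}$ (it equals $c\,r^{2-N}$ only to leading order), so the analogue of \eqref{linear_exterior} with the same $P(R)$ fails. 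The paper avoids this entirely: it keeps the free estimate of \cite{KeLaLiSc15} and treats the potential term $F(W+h)-F(W)-F(h)$ as a source via Strichartz, which yields only the quantitative gain $\|\pi_{P(R)^{\bot}}(h_0,h_1)\|_{\HHH(R)}\lesssim R^{-2}\|\pi_{P(R)}(h_0,h_1)\|_{\HHH(R)}$, i.e.\ \eqref{boundPRh}.

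Second, and more fundamentally, "iterating this improvement yields $\vec w(t)\equiv 0$" is not justified and is precisely the missing idea. Killing the leading asymptotic coefficient of $w=u-Z$ by the a priori bound $R^{-N/2}$ only tells you that $w$ is again in the case $k_0=m$, $\ell=0$; iterating self-improving estimates of this type can only give that the coordinates $\eta_k(R)$ of $\pi_{P(R)}\vec w(0)$ decay faster than any power of $R$ (the paper's \eqref{very_fast}), never exact vanishing. The paper's proof of the "furthermore" clause of Proposition \ref{P:asymptotic_NR} (Steps 4 and 5) needs, in addition, the complementary \emph{lower} bound extracted from the same difference inequality \eqref{MZ31bis}: the coordinate vector can shrink by at most a fixed geometric factor per dyadic step in $R$ (roughly $\sum_k|\eta_k(2R)|\,(2R)^{-(k-\frac12)}\geq 2^{-m}\sum_k|\eta_k(R)|\,R^{-(k-\frac12)}$ for $R$ large), which is incompatible with super-polynomial decay unless all $\eta_k(R)$ vanish; only then does \eqref{boundPRh} force $(w_0,w_1)=0$ for large $r$. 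Without this two-sided argument your proposal stops at "$u-Z$ decays rapidly", which does not give coincidence with a stationary solution for large $r$.
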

The case of even space dimensions seems more difficult. Indeed, as proved in \cite{CoKeSc14}, the lower bound for the linear exterior energy fails in this case, but, as mentioned before, the qualitative result is still valid (see Proposition \ref{P:Neven}). However in order to pass to the nonlinear case in even dimension, we would need some quantitative version of the nonradiative property. 
We note however that it is possible to classify all nonradiative solutions with compactly supported initial data:
\begin{mainprop}
\label{P:nonradi_compact}
Let $N\geq 3$.
Let $u$ be a radial solution of \eqref{NLW} on $\{|x|>|t|\}$ such that $(u_0,u_1)$ is compactly supported. Assume \eqref{nonradi}. Then $u=0$.
\end{mainprop}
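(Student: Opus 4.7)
The plan is to argue by contradiction. Assume $(u_0,u_1)\not\equiv 0$, and let $\rho>0$ be the radius of its support. By finite speed of propagation, $u(t,x)=0$ whenever $|x|>\rho+|t|$, so the nonradiative hypothesis~\eqref{nonradi} upgrades to
$$\lim_{t\to\pm\infty}\int_{|x|>R+|t|}|\nabla_{t,x}u(t,x)|^2\,dx=0,\qquad\forall\,R>0.$$
Since $(u_0,u_1)\in\HHH$ is supported in $\overline{B}_\rho$, one has $\|(u_0,u_1)\|_{\HHH(R)}\to 0$ as $R\to\rho^-$. By the small-data exterior scattering theory developed in Section~\ref{S:preliminaries}, for $R$ close enough to $\rho$ the restriction of $u$ to $\{|x|>R+|t|\}$ is close in energy to the linear evolution $v_L^R$ of the restricted data, with error
$$\sup_{t\in\RR}\|\vec{u}(t)-\vec{v_L^R}(t)\|_{\HHH(\{|x|>R+|t|\})}\lesssim \|(u_0,u_1)\|_{\HHH(R)}^{(N+2)/(N-2)}.$$
Combining the two preceding displays yields the superlinear upper bound
$$\sum_{\pm}\lim_{t\to\pm\infty}\int_{|x|>R+|t|}|\nabla_{t,x}v_L^R|^2\,dx\lesssim \|(u_0,u_1)\|_{\HHH(R)}^{2(N+2)/(N-2)}.$$

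In odd space dimension $N$, I would close the argument via the channel-of-energy estimate of \cite{KeLaLiSc15}, which bounds the left-hand side of the last display from below by $c\|\pi_{P(R)^\perp}(u_0,u_1)\|_{\HHH(R)}^2$. Decompose $(u_0,u_1)|_{r>R}=\Phi+\Psi$ with $\Phi\in P(R)$ and $\Psi$ in its orthogonal complement. Since the data vanish on $r>\rho$, one has $\Psi(r)=-\Phi(r)$ for $r>\rho$; and since the elements of the finite-dimensional space $P(R)$ are linear combinations of $r^{-j}$'s, the $\HHH(R)$- and $\HHH(\rho)$-norms are uniformly equivalent on $P(R)$, with a constant that tends to one as $R\to\rho^-$. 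This yields $\|\Phi\|_{\HHH(R)}\lesssim\|\Psi\|_{\HHH(R)}$, whence $\|(u_0,u_1)\|_{\HHH(R)}^2\lesssim\|(u_0,u_1)\|_{\HHH(R)}^{2(N+2)/(N-2)}$, which is incompatible with $\|(u_0,u_1)\|_{\HHH(R)}\to 0^+$.

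For even $N$ the channel-of-energy estimate fails~\cite{CoKeSc14}, and this is the main obstacle to extending the argument directly. In that case the reasoning has to rely on Proposition~\ref{P:Neven} applied to the full-space linear evolution $v_L$ of $(u_0,u_1)$, which by finite speed agrees with $v_L^R$ on $\{|x|>R+|t|\}$, together with a more delicate analysis of the behavior of $v_L$ inside the shell $\{|t|<|x|<\rho+|t|\}$ in terms of its incoming and outgoing radiation fields. Recovering from these qualitative ingredients a lower bound compatible with the superlinear upper bound above is the most delicate point.
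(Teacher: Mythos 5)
Your argument in odd dimensions is essentially correct, and it is a genuinely different route from the paper's: the perturbative comparison with $v_L^R$ plus the exterior energy bound \eqref{linear_exterior} of \cite{KeLaLiSc15} reproduces the estimate $\|\pi_{P(R)^{\bot}}(u_0,u_1)\|_{\HHH(R)}\lesssim \|(u_0,u_1)\|_{\HHH(R)}^{\frac{N+2}{N-2}}$ (this is \eqref{MZ20}, Step 1 of the proof of Proposition \ref{P:asymptotic_NR}), and your observation that compact support forces $\|(u_0,u_1)\|_{\HHH(R)}\lesssim\|\pi_{P(R)^{\bot}}(u_0,u_1)\|_{\HHH(R)}$ for $R$ slightly below the support radius (because a nonzero element of $P(R)$ cannot vanish for $r>\rho$, and the $\HHH(R)$- and $\HHH(\rho)$-norms are uniformly comparable on the finite-dimensional space $P(R)$) then yields the contradiction with $\|(u_0,u_1)\|_{\HHH(R)}\to 0^+$. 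Note that one does not even need the constant in the norm equivalence to tend to $1$; uniform comparability for $\rho/2<R<\rho$ suffices.

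The genuine gap is the even-dimensional case, which the statement covers ($N\geq 3$, all parities) and which your proposal does not prove: you acknowledge that the channel-of-energy estimate fails for even $N$ and that extracting a quantitative lower bound from Proposition \ref{P:Neven} (which is purely qualitative, and stated for truly nonradiative linear solutions with $R=0$, not for exterior regions $\{|x|>R+|t|\}$) is "the most delicate point" — but this is precisely the missing ingredient; the paper itself remarks that passing to the nonlinear problem in even dimensions by this route would require a quantitative version of the nonradiative property, which is not available. The paper's actual proof avoids channels of energy altogether and is dimension-independent: since $(u_0,u_1)$ is supported in $\{r\leq\rho_0\}$ and $\|(u_0,u_1)\|_{\HHH(\rho_0-\eps)}$ is small, $u$ is globally defined and bounded in $\HHH(\rho_0-\eps+|t|)$, so writing the equation as $\partial_t^2u-\Delta u=Vu$ with $V=|u|^{\frac{4}{N-2}}$ and using the radial Sobolev inequality gives $|V(t,r)|\leq C(u)/r^2$ outside the cone; Proposition \ref{P:support} (the support-propagation result for the linear wave equation with an $O(r^{-2})$ potential from \cite{DuKeMe19Pa}) then yields, for $R$ slightly below $\rho_0$, a lower bound
\begin{equation*}
\int_{R+|t|}^{+\infty}(\partial_{t,r}u(t,r))^2r^{N-1}\,dr\geq \frac18\int_{R}^{+\infty}(\partial_{t,r}u(0,r))^2r^{N-1}\,dr>0
\end{equation*}
valid for all $t\geq 0$ or all $t\leq 0$, which directly contradicts \eqref{nonradi}. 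To complete your proof as stated you would either need to supply the missing quantitative even-dimensional exterior estimate, or switch to this potential-equation argument.
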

The proof of Proposition \ref{P:nonradi_compact} relies on a property of the linear wave equation with a potential, proved in our preceding work \cite{DuKeMe19Pa}. If also yields an interesting property of propagation of the support. If $u(t)$ is a solution of \eqref{NLW}, we denote by $\rho(t)$ the radius of the largest ball containing the essential support of $\vec{u}(t)$:
$$\rho(t)=\min\left\{ \rho\;:\; \int_{\rho}^{+\infty} (\partial_{t,r}u)^2\,r^{N-1}dr=0\right\}.$$
\begin{theo}
\label{T:support}
 Let $u$ be a radial solution of the linear wave equation \eqref{LW} or the nonlinear wave equation \eqref{NLW} such that $(u_0,u_1)$ is compactly supported. Then the following holds for all $t\geq 0$ or for all $t\leq 0$ in the domain of definition of $u$:
 \begin{equation}
 \label{propagation_support}
\rho(t)=\rho(0)+|t|.
 \end{equation} 
\end{theo}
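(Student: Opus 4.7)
The inequality $\rho(t)\leq \rho(0)+|t|$ is a direct consequence of finite speed of propagation for both \eqref{LW} and \eqref{NLW}, so the substance of the theorem lies in the matching lower bound on at least one side.

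I would argue by contradiction, reducing the statement to Proposition~\ref{P:nonradi_compact}. Finite speed of propagation applied in both time directions shows that $\rho$ is $1$-Lipschitz. Therefore the nonnegative function
\[
\psi(t):=\rho(0)+|t|-\rho(t)
\]
satisfies $\psi(0)=0$ and is nondecreasing as $|t|$ grows away from $0$ on each side of the origin. Suppose both alternatives in the conclusion fail. Then $\psi(t_+)>0$ for some $t_+>0$ and $\psi(t_-)>0$ for some $t_-<0$ in the domain of definition, and by monotonicity there exist $\varepsilon>0$ and $T>0$ such that, setting $R:=\rho(0)-\varepsilon\in[0,\rho(0))$,
\[
\vec u(t)\equiv 0\quad\text{on }\{|x|>R+|t|\}\quad\text{for every }|t|\geq T.
\]

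The plan is to upgrade this ``retraction on a shifted cone'' to the full nonradiative condition \eqref{nonradi}. For $|t|\geq T$, since $\vec u(t)$ is supported in $|x|\leq R+|t|$, the exterior integral in \eqref{nonradi} reduces to the annular integral
\[
\int_{|t|<|x|\leq R+|t|}|\nabla_{t,x}u(t,x)|^2\,dx,
\]
and I would show that this tends to $0$ as $|t|\to\infty$ by combining an energy flux identity on the outgoing cone $\{|x|=R+|t|\}$ (which contributes nothing since $\vec u$ vanishes there) with the two-sided character of the retraction, which lets one compare the past and future pictures and rule out a persistent radiation annulus. Once the nonradiative condition is in place, Proposition~\ref{P:nonradi_compact} (or, in the purely linear case, the linear rigidity behind \eqref{equirepartition} and Proposition~\ref{P:Neven}) yields $u\equiv 0$, contradicting $\rho(0)>0$ for any nontrivial solution.

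The main obstacle is precisely the annular-energy decay. For a generic compactly supported datum, even for the free wave equation, the annular integral between the cones $|x|=|t|$ and $|x|=R+|t|$ converges to a positive constant, reflecting the standard radiation to infinity. A one-sided retraction (say, in the future only) is perfectly compatible with such a radiation pattern; what forbids it is that the retraction also holds in the past. Making this two-sided cancellation quantitative, using the compact support of the initial data, is the technical heart of the argument, and is where the finer techniques underlying Proposition~\ref{P:nonradi_compact} (via \cite{DuKeMe19Pa}) come into play.
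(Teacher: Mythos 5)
Your reduction has a genuine gap exactly where you acknowledge it: the passage from the two-sided retraction of the support ($\vec u(t)=0$ on $\{|x|>R+|t|\}$ for $|t|\geq T$, with $R<\rho(0)$) to the nonradiative condition \eqref{nonradi} is not proved, and it does not follow from the flux identity you invoke. The flux of energy across the outgoing cone $\{|x|=R+|t|\}$ vanishing only re-expresses the fact that the energy outside that cone is zero, which you already know; it gives no control whatsoever on the energy in the annulus $\{|t|<|x|\leq R+|t|\}$, and nothing prevents, a priori, a solution from carrying a fixed amount of energy forever along an intermediate cone $\{|x|=c+|t|\}$ with $0<c<R$, in both time directions. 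Ruling this out is precisely the channel-of-energy type statement that is the content of Proposition \ref{P:support}; ``comparing the past and future pictures'' is a restatement of the difficulty, not an argument. There is also a secondary obstruction to your reduction: Proposition \ref{P:nonradi_compact} requires a solution defined on all of $\{|x|>|t|\}$, which is not guaranteed for a large nonlinear solution (the theorem allows finite-time blow-up; only the region $\{|x|>\rho(0)-\eps+|t|\}$ is controlled by small-data theory).

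The paper avoids this detour entirely and applies Proposition \ref{P:support} directly, which is both shorter and logically prior: Proposition \ref{P:nonradi_compact} itself is deduced from it by the same device. Concretely, one writes $\partial_t^2u-\Delta u=Vu$ with $V=|u|^{\frac{4}{N-2}}$; by small-data theory the solution exists on $\{|x|>\rho(0)-\eps+|t|\}$ with uniformly bounded exterior norm, so the radial Sobolev inequality gives $|V(t,r)|\leq C(u)/r^2$ there. Proposition \ref{P:support}, applied at a radius $\rho$ slightly below $\rho(0)$ (where the exterior energy of the data is positive by the definition of $\rho(0)$), then yields a positive lower bound, uniform in time, for the energy in $\{|x|>\rho+|t|\}$ for all $t\geq 0$ or all $t\leq 0$; hence $\rho(t)\geq\rho+|t|$ on that side, and letting $\rho\uparrow\rho(0)$ and combining with the finite-speed upper bound (which you have) gives \eqref{propagation_support}. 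If you want to salvage your strategy, you should replace the unproven annular-energy decay step by this direct application of Proposition \ref{P:support}; as written, the heart of the proof is missing.
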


The assumption that $(u_0,u_1)$ is radial is of course crucial in the preceding statements (except for Proposition \ref{P:Neven}). However some results are still available without a symmetry assumption: we refer to \cite{DuKeMe19Pa} for a rigidity theorem analogous to the one of \cite{DuKeMe13}, in space dimensions $3$ and $5$, close to the ground state $W$, and to \cite{DuJiKeMe17b} for the soliton resolution along a sequence of times in space dimensions $N\in \{3,4,5\}$.

The outline of the paper is as follows. In Section \ref{S:preliminaries}, we adapt the well-posedness theory to the exterior of wave cones and construct a profile decomposition adapted to this setting. This generalized well-posedness theory is needed to give a meaning to the statements of Theorems \ref{T:maintheo} and \ref{T:maintheo2}. We will also use it in our subsequent work \cite{DuKeMe19Pb} on soliton resolution in odd space dimension. It is quite delicate in space dimensions $N\geq 6$, where the standard Cauchy theory, developed in \cite{BuCzLiPaZh13}, uses spaces defined by fractional (and thus nonlocal) derivatives. 
In Section \ref{S:exterior}, we prove Proposition \ref{P:Neven}, Theorems \ref{T:maintheo} and \ref{T:maintheo2}.  The proof of Proposition \ref{P:Neven} is based on an asymptotic formula for even dimensions proved in \cite{CoKeSc14}, and an expansion in spherical harmonics to reduce to the radial case. The strategy of the proofs of the Theorems is quite similar to the one of the rigidity result in \cite{DuKeMe13}, using the general exterior energy bound for the linear wave equation proved in \cite{KeLaLiSc15}. However the analysis is much more complicated due to the higher dimension of the space $P(R)$ of singular solutions of the linear wave equation.
Proposition \ref{P:nonradi_compact} and Theorem \ref{T:support} are proved in Section \ref{S:compact}, using a property of the radial linear wave equation with a potential proved in \cite{DuKeMe19Pa}.

\section{Well-posedness and related issues}
\label{S:preliminaries}
\subsection{Notations}
\label{SS:notations}
If $\lambda>0$, $f\in \dot{H}^1$, we let $f_{(\lambda)}(x)=\frac{1}{\lambda^{\frac{N}{2}-1}}f\left( \frac{x}{\lambda} \right).$

If $A$ is a space of distributions on $\RR^N$, we will denote by $A_{\rad}$ the subspace of $A$ consisting of the elements of $A$ that are radial. We will, without making a distinction, consider a radial function as depending on the variable $x\in \RR^N$ or the variable $r=|x|$.

If $\Omega$ is an open subset of $\RR^{n}$, ($n=N$ or $n=N+1$), and $A=A(\RR^n)$ a Banach space of distributions on $\RR^n$, we recall that $A(\Omega)$ is the set of restrictions of elements of $A$ to $\Omega$, with the norm
$$\|u\|_{A(\Omega)}:=\inf_{\tilde{u}}\|\tilde{u}\|_{A(\RR^n)}.$$
where the infimum is taken over all $\tilde{u}\in A(\RR^n)$ such that $\tilde{u}_{\restriction \Omega}=u$. 
To lighten notation, if $R>0$ and $n=N$, we will denote by 
$$A(R):= A_{\rad}\left(\left\{x\in \RR^N\text{ s.t. }|x|>R\right\}\right).$$
We will mainly use this notation with $\HHH$, so that $\HHH(R)$ is the space of radial distributions $(u_0,u_1)$ defined for $r>R$ such that
$$ u_0\in L^{\frac{2N}{N-2}}((R,+\infty), r^{N-1}dr), \quad \int_R^{\infty} (\partial_ru_0)^2r^{N-1}\,dr<\infty$$
and 
$$ u_1\in L^2\left((R,+\infty),r^{N-1}dr\right).$$

We will often consider solutions of the wave equation in the exterior of wave cones. For $R>0$, we denote 
$$\Gamma_R(t_0,t_1)=\{|x|> R+|t|,\; t\in [t_0,t_1]\}.$$
To lighten notations, we will denote
$$\Gamma_R(T)=\Gamma_R(0,T),\quad \Gamma_R=\Gamma_{R}(0,\infty).$$
We denote by $S_L(t)$ the linear wave group: 
\begin{equation}
\label{SL}
S_L(t)(u_0,u_1)=\cos(t\sqrt{-\Delta})u_0+\frac{\sin(t\sqrt{-\Delta})}{\sqrt{-\Delta}}u_1,
\end{equation} 
so that the general solution (in the Duhamel sense) of 
\begin{equation}
 \label{LW_in}
\left\{
\begin{aligned}
(\partial_t^2-\Delta)u&=f\\
\vec{u}_{\restriction t=t_0}&=(u_0,u_1)\in \HHH,
\end{aligned}\right.
\end{equation} 
where $I$ is an interval and $t_0\in I$
is 
\begin{equation}
\label{Duhamel}
u(t)=S_L(t-t_0)(u_0,u_1)+\int_{t_0}^{t}S_L(t-s)(0,f(s))\,ds. 
\end{equation} 
We note that by finite speed of propagation, the restriction of $u$ to $\Gamma_R(T)$ depends only on the restriction of $f$ to $\Gamma_R(T)$ and the restriction of $(u_0,u_1)$ to $\{r>R\}$.

\subsection{Function spaces and Strichartz estimates}
We will denote by $\dot{W}^{s,p}(\RR^N)$ the homogeneous Sobolev space defined as the closure of $C^{\infty}_0(\RR^N)$ with respect to the norm $\|\cdot\|_{\dot{W}^{s,p}}$ defined by
$$ \|f\|_{\dot{W}^{s,p}}:=\|D^s f\|_{L^p},$$
where $D^s$ is the Fourier multiplier of symbol $|\xi|^{s}$. We denote by $\dot{B}^{s}_{p,q}$ the standard homogeneous Besov space, which can be defined using Littlewood-Paley decomposition or the real interpolation method: $\dot{B}^{s}_{p,q}=\left[L^p,\dot{W}^{1,p}\right]_{s,q}$, $0<s<1$, $1\leq p,q\leq \infty.$

We define, following \cite{BuCzLiPaZh13}:
\begin{gather*}
S:= L^{\frac{2(N+1)}{N-2}}(\RR^{1+N}),\\ 
W:=L^{\frac{2(N+1)}{N-1}}\left( \RR,\dot{B}^{\frac{1}{2}}_{\frac{2(N+1)}{N-1},2} (\RR^N) \right)\\
W':= L^{\frac{2(N+1)}{N+3}}\left(\RR,\dot{B}^{\frac{1}{2}}_{\frac{2(N+1)}{N+3},2}(\RR^N) \right)\\
X:=L^{\frac{N^2+N}{N+2}}\left(\RR,\dot{W}^{\frac{2}{N},\frac{2(N+1)}{N-1}}(\RR^N)\right)\\
X':=L^{\frac{N^2+N}{3N+2}}\left(\RR,\dot{W}^{\frac{2}{N},\frac{2(N+1)}{N+3}}(\RR^N)\right).
\end{gather*}

If $I$ is an interval, we will denote by $S(I)$, $W(I)$, $W'(I)$, $X(I)$, $X'(I)$ the restriction of these spaces to $I\times \RR^N$.

We will need the following Strichartz estimates (see \cite{St77a} \cite{GiVe95}): if $t_0\in I$, $f\in W'(I)$,  $(u_0,u_1)\in \HHH$, then $u$ (defined by \eqref{Duhamel}) is in $S(I)\cap W(I)$ and
\begin{equation}
\label{Strichartz}
\sup_{t\in \RR} \|\vec{u}(t)\|_{\HHH}+\|u\|_{S(I)}+
\|u\|_{W(I)} 
\lesssim \|(u_0,u_1)\|_{\HHH(I)} +\|f\|_{W'(I)}.
\end{equation}
We denote $F(u)=|u|^{\frac{4}{N-2}}u$. We will need the following chain rule for fractional derivatives (see \cite[Lemma 2.10]{BuCzLiPaZh13}): for a function $u\in \dot{B}^{\frac{1}{2}}_{\frac{2(N+1)}{N-1},2}(\RR^N)\cap L^{\frac{2(N+1)}{N-2}}(\RR^N)$,
\begin{equation}
\label{fxt_frct}
\|F(u)\|_{\dot{B}^{\frac{1}{2}}_{\frac{2(N+1)}{N+3},2}} \lesssim \|u\|_{\dot{B}^{\frac{1}{2}}_{\frac{2(N+1)}{N-1},2}}\|u\|_{L^{\frac{2(N+1)}{N-2}}}^{\frac{4}{N-2}}.
\end{equation}
combining with H\"older estimates, we obtain, considering a function $u\in W(I)\cap S(I)$
\begin{equation}
 \label{fractional}
\left\|F(u)\right\|_{W'(I)}\lesssim \|u\|_{S(I)}^{\frac{4}{N-2}}\|u\|_{W(I)}.
\end{equation}

\subsection{Local and global Cauchy theory}
\label{SS:defi}
\begin{defi}
\label{D:solution}
 Let $I$ be an interval with $t_0\in I$, $(u_0,u_1)\in \HHH$. If $N\geq 6$, we call solution of \eqref{NLW} on $I\times \RR^N$, with initial data
\begin{equation}
 \label{S0}
\vec{u}_{\restriction t=t_0}=(u_0,u_1)
\end{equation} 
a function $u\in C^0(I,\hdot)$ such that $\partial_t u\in C^0(I,L^2)$ and
\begin{equation}
 \label{S1}
\forall t\in I,\quad u(t)=S_L(t-t_0)(u_0,u_1)+\int_{t_0}^tS_L(s-t_0)F(u(s))\,ds.
\end{equation} 
If $N\in \{3,4,5\}$, a solution is defined in the same way, with the additional requirement that $u\in S(J\times \RR^N)$ for all compact intervals $J\subset I$.
\end{defi}
\begin{remark}
 The uniqueness of Duhamel solutions of \eqref{NLW} in the class $C^0(I,\HHH)$ was proved in \cite{BuCzLiPaZh13} in dimension $N\geq 6$. This unconditional uniqueness is not known in space dimensions $3$, $4$ and $5$, which explains the additional requirement $u\in S(J\times \RR^N)$ in the definition.
\end{remark}

It is known (see \cite{GiVe95}, \cite{KeMe08} and \cite{BuCzLiPaZh13}), that for all initial data $(u_0,u_1)$, there is a unique maximal solution $u$ defined on a maximal interval $(T_-,T_+)$ and that it satisfies the following blow-up criterion:
$$T_+<\infty\Longrightarrow \|u\|_{S([t_0,T_+))}=\infty.$$


We will also need to define a solution on the exterior $\Gamma_R(t_0,t_1)$ of wave cones. We will use the following continuity property of multiplication by characteristic functions on Besov space:
\begin{lemma}
 \label{L:char_Besov}
 Let $R\geq 0$.
 \begin{itemize}
  \item The multiplication by the characteristic function $\indic_{\{|x|>R\}}$ is 
a continuous function from $\dot{B}^{\frac 12}_{\frac{2(N+1)}{N+3},2}(\RR^N)$ into itself, and from
$\dot{W}^{\frac{2}{N},\frac{2(N+1)}{N+3}}(\RR^N)$ into itself. In both cases, the operator norm is independent of $R$.
\item Let $I$ be an interval. The multiplication by the characteristic function $\indic_{\{|x|>R+|t|\}}$ is continuous from $W'(I)$ into itself and from $X'(I)$ into itself. The operator norm is independent of $R$ and $I$.
\end{itemize}
 \end{lemma}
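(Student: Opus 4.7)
The plan is to reduce the lemma to a classical fact about pointwise multipliers on fractional smoothness spaces: if $\chi$ is the characteristic function of a Lipschitz domain (here, a half-line $\{r>R\}$ in radial coordinates, or the exterior of a ball in $\RR^N$), then multiplication by $\chi$ acts boundedly on $\dot{B}^{s}_{p,q}$ and on $\dot{F}^{s}_{p,2}=\dot{W}^{s,p}$ provided the critical Hardy-type condition $sp<1$ holds. The first step is therefore to verify this numerical condition in our setting. For the Besov space, $s=\tfrac12$ and $p=\tfrac{2(N+1)}{N+3}$, so $sp=\tfrac{N+1}{N+3}<1$. For the fractional Sobolev space, $s=\tfrac{2}{N}$ with the same $p$, giving $sp=\tfrac{4(N+1)}{N(N+3)}$, which is $<1$ exactly when $N^{2}-N-4>0$, i.e.\ for $N\geq 3$. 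So in both cases the classical multiplier theorem applies.

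Next I would establish the uniformity in $R$ by scaling. The homogeneous norms $\|\cdot\|_{\dot{B}^{1/2}_{p,2}}$ and $\|\cdot\|_{\dot{W}^{2/N,p}}$ are invariant, after the appropriate $L^{p}$-normalized dilation $f\mapsto R^{-N/p}f(\cdot/R)$, under the same dilation that sends $\indic_{\{|x|>1\}}$ to $\indic_{\{|x|>R\}}$. Conjugating the multiplication operator by this dilation therefore equates the operator norm at level $R$ with the operator norm at level $1$, proving $R$-independence. For the actual boundedness at $R=1$, I would give a self-contained proof in the Sobolev-Slobodeckij style: writing $g=\chi f$ and using the double-integral characterization
\[
\|g\|_{\dot{W}^{s,p}}^{p}\simeq\iint\frac{|g(x)-g(y)|^{p}}{|x-y|^{N+sp}}\,dx\,dy,
\]
I would split into the three regions according to the values of $(\chi(x),\chi(y))$. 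The diagonal regions give exactly $\lesssim\|f\|_{\dot{W}^{s,p}}^{p}$; the mixed region (say $|x|>1\geq|y|$) produces after integrating $|x-y|^{-N-sp}$ over $|y|\leq 1$ a term controlled by $\int_{|x|>1}|f(x)|^{p}(|x|-1)^{-sp}\,dx$. This is precisely Hardy's inequality near the sphere, which is valid because $sp<1$, and is bounded by $\|f\|_{\dot{W}^{s,p}}^{p}$. The Besov case with $q=2\neq p$ is handled either by real interpolation between the Slobodeckij-type case $q=p$ and nearby indices, or more directly via a Littlewood-Paley decomposition combined with the same Hardy ingredient dyadically.

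For part (ii), the space-time statement reduces to part (i) slice by slice: at each fixed $t\in I$ the function $x\mapsto \indic_{\{|x|>R+|t|\}}$ is the characteristic function $\indic_{\{|x|>R'\}}$ with $R'=R+|t|\geq 0$. By the $R$-uniform bound of part (i),
\[
\bigl\|\indic_{\{|x|>R+|t|\}}f(t,\cdot)\bigr\|_{\dot{B}^{1/2}_{p,2}}\lesssim\|f(t,\cdot)\|_{\dot{B}^{1/2}_{p,2}},\qquad
\bigl\|\indic_{\{|x|>R+|t|\}}f(t,\cdot)\bigr\|_{\dot{W}^{2/N,p}}\lesssim\|f(t,\cdot)\|_{\dot{W}^{2/N,p}},
\]
with constants independent of $t$, $R$ and $I$. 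Taking the $L^{q}(I,dt)$ norm in each inequality (with the exponent $q$ appropriate to $W'$ or $X'$) and noting that $\|\cdot\|_{W'(I)}$, $\|\cdot\|_{X'(I)}$ are themselves defined as infima over extensions but the multiplier preserves support in $I\times\RR^{N}$, yields the claim. The main obstacle I anticipate is the $q=2$ Besov case: a clean direct proof requires either a careful dyadic decomposition (controlling the Littlewood-Paley pieces of $\chi f$ in terms of those of $f$ via the Hardy inequality) or invoking a pointwise multiplier result of Strichartz/Triebel (see e.g.\ Runst--Sickel) whose hypothesis reduces precisely to $sp<1$.
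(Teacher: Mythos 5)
Your proposal contains a genuine gap in the $\dot{W}^{2/N,p}$ (hence $X'$) half of the lemma. The paper defines $\dot W^{s,p}$ by $\|D^sf\|_{L^p}$, i.e.\ it is the Triebel--Lizorkin space $\dot F^{s}_{p,2}$, and here $p=\frac{2(N+1)}{N+3}\neq 2$. The double-integral (Gagliardo--Slobodeckij) characterization you take as the core of your self-contained argument is equivalent to the $\dot B^{s}_{p,p}$ norm, not to $\|D^sf\|_{L^p}$, so your region-splitting plus Hardy argument proves boundedness on the wrong space for this half of the statement. Moreover the interpolation fix cannot rescue it: real interpolation in the smoothness index always lands in the Besov scale, so from bounds on $\dot B^{s_0}_{p,p}$ and $\dot B^{s_1}_{p,p}$ you can reach $\dot B^{1/2}_{p,2}$ (which does settle the $W'$ part), but never $\dot F^{2/N}_{p,2}$. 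To treat $\dot W^{2/N,p}$ you must invoke a characteristic-function multiplier theorem on $F$-spaces (Triebel, Proposition 3.3.2, or Runst--Sickel), which is what the paper does; and since those results are stated for non-homogeneous spaces, one still needs the homogeneous-to-non-homogeneous bridge that occupies half of the paper's proof: write $\indic_{\{|x|>1\}}=1-\indic_{\{|x|<1\}}$, insert a smooth compactly supported cutoff $\chi$ with $\indic_{\{|x|<1\}}=\indic_{\{|x|<1\}}\chi$, show multiplication by $\chi$ maps $\dot B^{s}_{p,2}\to B^{s}_{p,2}$ and $\dot W^{s,p}\to W^{s,p}$ by interpolating the bounds $L^p\to L^p$ and $\dot W^{1,p}\to W^{1,p}$, apply the non-homogeneous multiplier theorem, and use that for $s>0$ the non-homogeneous space embeds in the homogeneous one. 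Your proposal quotes the multiplier fact directly in homogeneous form and never addresses this point, and it misidentifies the ``main obstacle'': the $q=2$ Besov case you worry about is the half your method can actually handle, while the Triebel--Lizorkin case is the one it cannot.

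The remaining ingredients are sound and essentially those of the paper: your numerical check $sp<1$ is exactly Triebel's hypothesis $\frac1p-1<s<\frac1p$ (the lower bound being vacuous since $p>1$), your dilation argument for $R$-independence is the paper's scaling reduction to $R=1$, and the slice-by-slice deduction of the statement for $\indic_{\{|x|>R+|t|\}}$ on $W'(I)$ and $X'(I)$ is the paper's ``the second point follows immediately from the first.'' The Gagliardo-plus-Hardy route, combined with real interpolation to reach $q=2$, is a legitimately more elementary and fully homogeneous alternative for the Besov/$W'$ part, at the price of leaving the $\dot W^{2/N,p}$/$X'$ part unproved as written.
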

\begin{proof}
 The second point follows immediately from the first point.
 
 By elementary scaling argument, it is sufficient to consider only the case $R=1$. Since $\indic_{\{|x|<1\}}=1-\indic_{\{|x|>1\}}$ almost everywhere, we are reduced to prove the boundedness of the multiplication by the characteristic function of the unit ball. According to \cite[Proposition 3.3.2]{Triebel83}, if 
 $\frac{1}{p}-1<s<\frac{1}{p}$, then the multiplication by $\indic_{\{|x|<1\}}$ is continuous from $B^{s}_{p,2}$ to $B^{s}_{p,2}$ and from $W^{s,p}$ to $W^{s,p}$ (to obtain this second fact from Proposition 3.3.2 of \cite{Triebel83}, recall that $W^{s,p}$ is identical to the Triebel space $F^{s}_{p,2}$). This proves that the multiplication by $\indic_{\{|x|<1\}}$ is a bounded operator on the non-homogeneous analogs of the homogeneous spaces that we want to consider here. 
 
 Let $\chi\in C_0^{\infty}(\RR^N)$ be such that $\chi(x)=1$ if $|x|\leq 1$. Then $\indic_{\{|x|<1\}}=\indic_{\{|x|<1\}}\chi$. Furthermore, by an explicit computation and the Sobolev inequality, we have that for all $p\geq 1$, the multiplication by $\chi$ is a bounded operator from $L^p$ to $L^p$ and from $\dot{W}^{1,p}$ to $W^{1,p}$. By real or complex interpolation (Theorems 6.2.4, 6.3.1, 6.4.5 in \cite{BerghLofstrom76BO}) we deduce that for all $s\in (0,1)$ the multiplication by $\chi$ is a bounded operator from $\dot{B}^{s}_{p,2}$ to $B^{s}_{p,2}$ and from $\dot{W}^{s,p}$ to $W^{s,p}$. This concludes the proof.
\end{proof}
\begin{remark}
\label{R:frxt}
Combining Lemma \ref{L:char_Besov} with the fractional chain rule \eqref{fxt_frct}, we see that 
\begin{equation*}
\left\|\indic_{\Gamma_R(T)} F(u)\right\|_{\dot{B}^{\frac{1}{2}}_{\frac{2(N+1)}{N+3},2}} 
\lesssim \|u\|_{\dot{B}^{\frac{1}{2}}_{\frac{2(N+1)}{N-1},2}}\|u\|_{L^{\frac{2(N+1)}{N-2}}}^{\frac{4}{N-2}}.
\end{equation*}
We claim that we can replace $\|u\|_{L^{\frac{2(N+1)}{N-2}}}^{\frac{4}{N-2}}$ in the right hand side by $\|\indic_{\{|x|>R\}}u\|_{L^{\frac{2(N+1)}{N-2}}}^{\frac{4}{N-2}}$, which yields, after an integration in time, the following analog of \eqref{fractional} for outside wave cones:
\begin{equation}
 \label{fractional_cones}
 \|\indic_{\Gamma_R(T)}F(u)\|_{W'((0,T))}\lesssim \|u\|_{S(\Gamma_R(T))}^{\frac{4}{N-2}}\|u\|_{W((0,T))}.
\end{equation} 
By scaling, we can assume $R=1$.
Recall from Subsection \ref{SS:notations} the definition of $\dot{B}^{\frac{1}{2}}_{\frac{2(N+1)}{N-1},2}(\{|x|>1\})$.
There exists an extension operator $\EEE$ which is bounded from 
$\dot{B}^{\frac{1}{2}}_{\frac{2(N+1)}{N-1},2}(\{|x|>1\})$ to  $\dot{B}^{\frac{1}{2}}_{\frac{2(N+1)}{N-1},2}(\RR^N)$ and from $L^{\frac{2(N+1)}{N-2}}(\{|x|>1\})$ to $L^{\frac{2(N+1)}{N-2}}(\RR^N)$ (see e.g. \cite[Theorem 3.3.4]{Triebel83}, which is stated on a bounded domain but remains valid with the same proof on the exterior of a smooth bounded compact set). We have
\begin{multline*}
\|\indic_{\{|x|>1\}}F(u)\|_{\dot{B}^{\frac{1}{2}}_{\frac{2(N+1)}{N+3},2}} =
\|\indic_{\{|x|>1\}}F(\EEE u)\|_{\dot{B}^{\frac{1}{2}}_{\frac{2(N+1)}{N+3},2}}\\
\lesssim \|\EEE u\|_{\dot{B}^{\frac{1}{2}}_{\frac{2(N+1)}{N-1},2}}\|\EEE u\|_{L^{\frac{2(N+1)}{N-2}}}^{\frac{4}{N-2}}\lesssim \|u\|_{\dot{B}^{\frac{1}{2}}_{\frac{2(N+1)}{N-1},2}}\|\indic_{\{|x|>1\}}u\|_{L^{\frac{2(N+1)}{N-2}}}^{\frac{4}{N-2}}
\end{multline*}
\end{remark}

\begin{defi}
\label{D:sol_cone}
 Let $t_0<t_1$, $R\geq 0$. Let $(u_0,u_1)\in \HHH(R)$. A solution $u$ of \eqref{NLW} on $\Gamma_R(t_0,t_1)$ with initial data $(u_0,u_1)$ is the restriction to $\Gamma_R(t_0,t_1)$ of a solution $\tilde{u}\in C^0([t_0,t_1],\hdot)$ with $\partial_t\tilde{u}\in C^0([t_0,t_1],L^2)$, to the equation:
 \begin{equation}
  \label{NLW_trunc}
\partial_t^2\tu-\Delta \tu=|\tu|^{\frac{4}{N-2}}\tu\indic_{\{|x|>R+|t|\}},
\end{equation} 
with an initial data
\begin{equation}
 \label{ID_trunc}
 \vec{\tu}_{\restriction t=t_0}=(\tilde{u}_0,\tilde{u}_1),
\end{equation}
where $(\tilde{u}_0,\tilde{u}_1)\in \HHH$ is an extension of $(u_0,u_1)$
\end{defi}
Note that by finite speed of propagation, the value of $u$ on $\Gamma_R(t_0,t_1)$ does not depend on the choice of $(\tilde{u}_0,\tilde{u}_1)$, provided $(\tilde{u}_0,\tilde{u}_1)$ and $(u_0,u_1)$ coincide for $r>R$.

According to Lemma \ref{L:char_Besov}, the Cauchy theory in \cite{BuCzLiPaZh13} (or \cite{KeMe08} for the case $N\in\{3,4,5\}$) adapts easily to the case of solutions outside wave cones. We give the statements, and omit most proofs that are the same as in \cite{KeMe08}, \cite{BuCzLiPaZh13}. The space $S(\Gamma_R(T))$ in the following proposition is defined in Subsection \ref{SS:notations}.

\begin{prop}[Local well-posedness]
 \label{P:LWP_cone}
 Let $R\geq 0$, $(u_0,u_1)\in \HHH(R)$ and $T>0$. Assume   
 $$\|(u_0,u_1)\|_{\HHH(R)}\leq A.$$
 Then there exists $\eta=\eta(A)$ such that if 
 $$\|S_L(t)(u_0,u_1)\|_{S(\Gamma_R(T))}<\eta,$$
 then there exists a unique solution $u$ to \eqref{NLW} on $\Gamma_R(T)$. Furthermore for all $t\in [0,T]$,
$$\|\vec{u}(t)-\vec{S}_L(t)(u_0,u_1)\|_{\HHH(R+|t|)}\leq C\eta^{\theta_N}A^{1-\theta_N}$$
for some constant $\theta_N$, $0<\theta_N<1$, depending only on $N$.
\end{prop}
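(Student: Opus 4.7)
The plan is a contraction mapping argument for the truncated equation \eqref{NLW_trunc}, following the strategy of \cite{KeMe08,BuCzLiPaZh13} but replacing the standard fractional chain rule \eqref{fractional} by its cone-localized counterpart \eqref{fractional_cones}; the essential new ingredient is the boundedness of the characteristic-function multiplier $\indic_{\{|x|>R+|t|\}}$ on the Besov-type Strichartz spaces, supplied by Lemma \ref{L:char_Besov}.

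First I would extend $(u_0,u_1)\in\HHH(R)$ to a pair $(\tilde u_0,\tilde u_1)\in\HHH$ with $\|(\tilde u_0,\tilde u_1)\|_{\HHH}\lesssim A$ via the extension operator $\EEE$ used in Remark \ref{R:frxt}, and consider the Duhamel map
$$\Phi(v)(t)=S_L(t)(\tilde u_0,\tilde u_1)+\int_0^t S_L(t-s)\bigl(0,F(v(s))\indic_{\{|x|>R+|s|\}}\bigr)\,ds$$
on the closed ball
$$\BBB=\bigl\{v\in W((0,T)):\|v\|_{W((0,T))}\le M,\ \|v\|_{S(\Gamma_R(T))}\le 2\eta\bigr\},$$
with $M$ a large multiple of $A$ to be chosen. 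Strichartz \eqref{Strichartz} combined with \eqref{fractional_cones} yields
$$\|\Phi(v)\|_{W((0,T))}\le CA+C\eta^{4/(N-2)}M,\qquad \|\Phi(v)\|_{S(\Gamma_R(T))}\le\eta+C\eta^{4/(N-2)}M,$$
so taking $M=2CA$ and then $\eta=\eta(A)$ small enough, $\Phi$ maps $\BBB$ into itself; the Lipschitz version of \eqref{fractional_cones} (obtained through the standard Hölder bookkeeping applied to $F(v)-F(w)$) makes $\Phi$ a contraction. Finite speed of propagation, together with the fact that the truncated source vanishes outside $\Gamma_R(T)$, ensures that the restriction of the fixed point to the cone is independent of the chosen extension, giving a well-defined solution of \eqref{NLW} on $\Gamma_R(T)$ in the sense of Definition \ref{D:sol_cone}.

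For the perturbation estimate, the difference $\vec u(t)-\vec S_L(t)(u_0,u_1)$ is, on $\{|x|>R+|t|\}$, just the Duhamel integral above, so Strichartz and \eqref{fractional_cones} give
$$\|\vec u(t)-\vec S_L(t)(u_0,u_1)\|_{\HHH(R+|t|)}\lesssim \|F(u)\indic_{\Gamma_R(t)}\|_{W'((0,t))}\lesssim\eta^{4/(N-2)}A.$$
The stated form $\eta^{\theta_N}A^{1-\theta_N}$ with $0<\theta_N<1$ then follows by interpolating this bound against the trivial energy bound $\lesssim A$.

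The main obstacle is purely technical and concentrated in the case $N\ge 6$: the spaces $W$ and $W'$ involve a nonlocal fractional Besov norm, so it is not a priori clear that the truncated nonlinearity $F(v)\indic_{\Gamma_R(T)}$ lies in $W'$ with quantitative control by $\|v\|_{S(\Gamma_R(T))}$ and $\|v\|_{W((0,T))}$. This is exactly what Lemma \ref{L:char_Besov} resolves, reducing the argument to a routine cutoff adaptation of the Cauchy theory of \cite{BuCzLiPaZh13}; for $N\in\{3,4,5\}$ the simpler space $S$ replaces $W\cap W'$ and no fractional localization issue arises, so the proof is a direct cutoff version of \cite{KeMe08}.
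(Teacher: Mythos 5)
Your scheme is the right one for $N\in\{3,4,5\}$, but for $N\geq 6$ the contraction as you set it up does not close, and this is precisely the delicate point the proposition is hiding. In your ball you demand $\|v\|_{S(\Gamma_R(T))}\leq 2\eta$ and $\|v\|_{W((0,T))}\leq M$ with $M\approx A$, and the Duhamel term is estimated by $C\eta^{4/(N-2)}M\approx C\eta^{4/(N-2)}A$ in \emph{both} norms. For the $S$-component of the ball you need $C\eta^{4/(N-2)}A\leq \eta$, i.e. $A\lesssim \eta^{1-\frac{4}{N-2}}$; since $\frac{4}{N-2}\leq 1$ when $N\geq 6$, the right-hand side does not blow up as $\eta\to 0$ (it tends to $0$ when $N\geq 7$ and equals $1$ when $N=6$), so the condition fails for large $A$ no matter how small $\eta(A)$ is chosen. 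The obstruction is structural, not a matter of tuning the radii: any closing argument based on the estimate $\|\indic_{\Gamma_R(T)}F(v)\|_{W'}\lesssim \|v\|_{S(\Gamma_R(T))}^{4/(N-2)}\|v\|_{W}$ produces a term that is \emph{sublinear} in the small unknown with a large coefficient $A$ (e.g. $A\|w\|_S^{4/(N-2)}$ if you subtract the free evolution and solve for the remainder), and a map $\delta\mapsto A\delta^{\gamma}$ with $\gamma<1$ has no small fixed point. So Lemma \ref{L:char_Besov} resolves only the localization issue, not the failure of the $S$--$W$ iteration in high dimensions; your final bound $\eta^{4/(N-2)}A$ and the "interpolation" to $\eta^{\theta_N}A^{1-\theta_N}$ rest on a contraction that has not been established for $N\geq 6$.

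The route the paper intends (Theorem 3.3 of \cite{BuCzLiPaZh13}, adapted via Lemma \ref{L:char_Besov}) goes through the spaces $X$ and $X'$ built on $\dot W^{\frac{2}{N},\cdot}$ -- this is why the paper defines them and why Lemma \ref{L:char_Besov} is stated for $\dot W^{\frac{2}{N},\frac{2(N+1)}{N+3}}$ and $X'$ as well, a point your proposal never uses. There the nonlinear estimate has the form $\|F(u)\|_{X'}\lesssim \|u\|_{X}^{\frac{N+2}{N-2}}$, genuinely superlinear in a \emph{single} norm, so a ball of small $X$-radius is preserved; and the needed smallness $\|S_L(t)(u_0,u_1)\|_{X}\lesssim \eta^{\theta}A^{1-\theta}$ is obtained by interpolating the hypothesis $\|S_L(t)(u_0,u_1)\|_{S(\Gamma_R(T))}<\eta$ against Strichartz norms controlled by $A$ -- which is exactly where the exponent $\theta_N$ in the conclusion comes from, rather than from an a posteriori interpolation of $\eta^{4/(N-2)}A$ against $A$ (which, as written, would only yield $\eta^{s\cdot 4/(N-2)}A$ in any case). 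For $N\in\{3,4,5\}$, where $\frac{4}{N-2}>1$, your iteration does close and coincides with the cutoff adaptation of \cite{KeMe08} that the paper has in mind.
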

(See \cite[Theorem 3.3]{BuCzLiPaZh13}).
\begin{remark}
\label{R:LWP_cone}
On $\RR^N$, the unconditional uniqueness of $C^0(I,\HHH)$ solutions is only known in dimension $N\geq 6$ (see \cite{BuCzLiPaZh13}). In the radial setting, outside of a wave cone, it can be proved easily in any space dimension, using the radial Sobolev inequality. Indeed, for a radial continuous function $u$ defined on $\Gamma_R(T)$ with $R>0$, we define
 $$ \|u\|_{R,T}=\sup_{(t,r)\in \Gamma_R(T)} r^{\frac{N}{2}-1} |u(t,r)|<\infty,$$
 and let
 $$X(R,T)=\left\{u\in C_{\rad}^0(\Gamma_R(T)),\; \|u\|_{R,T}<\infty\right\}.$$
 Using the radial Sobolev inequality, we see that if $(u_0,u_1)\in \HHH(R)$, $(\tilde{u}_0,\tilde{u}_1)\in \HHH$ is an extension of $(u_0,u_1)$ and $u_L(t)=S_L(t)(\tu_0,\tu_1)$, then $u_L\in X(R,T)$, and $\|u_L\|_{R,T}\lesssim \|(u_0,u_1)\|_{\HHH(R)}=:M$. For $v\in X(R,T)$, let
 $$\Phi(v)(t)=u_L(t)+\int_0^t \frac{\sin((t-s)\sqrt{-\Delta})}{\sqrt{-\Delta}}\left(\indic_{\Gamma_R(T)}|v(s)|^{\frac{4}{N-2}}v(s)\right)\,ds.$$
 Using the radial Sobolev inequality, energy inequalities, and finite speed of propagation, we obtain:
 \begin{multline*}
\left\|\int_0^t \frac{\sin((t-s)\sqrt{-\Delta})}{\sqrt{-\Delta}}\left(\indic_{\Gamma_R(T)}|v(s)|^{\frac{4}{N-2}}v(s)\right)\,ds\right\|_{X(R,T)}\lesssim \left\| \indic_{\Gamma_R(T)} |v|^{\frac{N+2}{N-2}}\right\|_{L^1(0,T,L^2)}\\
\lesssim \|v\|^{\frac{N+2}{N-2}}_{R,T}\log \left( 1+\frac{T}{R} \right).
\end{multline*}
As a consequence, we see that if $\frac{T}{R}\leq \exp \left( \frac{1}{CM^{\frac{4}{N-2}}} \right)-1$, the ball 
$$\left\{v\in X(R,T),\;\|v\|_{R,T}\leq 2M\right\}$$
is stable by $\Phi$. A similar argument proves that $\Phi$ is a contraction on this ball, which proves the uniqueness statement.
 \end{remark}
Gluing the preceding local solutions, we obtain a maximal solution defined on a maximal domain $\Gamma_R(0,T_R^+)$. 
By the Remark \ref{R:LWP_cone}, 
$$T_R^+\geq R \left(\exp\left( \frac{1}{C M^{\frac{4}{N-2}}} \right)-1\right),$$
where $M=\|(u_0,u_1)\|_{\HHH(R)}$. Iterating this remark, we obtain the following blow-up criterion
\begin{equation}
 \label{b_up_1}
T_R^+<\infty \Longrightarrow \lim_{t\to T_R^+} \|\vec{u}(t)\|_{\HHH(R+t)}=+\infty.
\end{equation} 
We can also write a blow-up criterion in term of space-time norms:
\begin{lemma}
\label{L:scattering}
Assume $u\in S\left(\Gamma_{R}(T_R^+)\right)$. Then $u$ is global. Furthermore, $u$ scatters to a linear solution for $\{|x|>R+|t|\}$: there exists a solution $v_L$ of the linear wave equation on $\RR\times \RR^N$ such that 
$$\lim_{t\to +\infty} \left\|\vec{u}(t)-\vec{v}_L(t)\right\|_{\HHH(R+|t|)}=0.$$
\end{lemma}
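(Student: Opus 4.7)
The plan is to upgrade the $S$-norm bound on $\Gamma_R(T_R^+)$ to bounds on $\|u\|_{W((0,T_R^+))}$ and on $\sup_t \|\vec u(t)\|_{\HHH(R+|t|)}$, apply the blow-up criterion \eqref{b_up_1} to conclude $T_R^+ = +\infty$, and then construct the scattering profile via a Duhamel tail argument. The workhorse throughout is the localized nonlinear estimate \eqref{fractional_cones} from Remark \ref{R:frxt}, which lets us carry out the usual Strichartz bootstrap on the equation \eqref{NLW_trunc} for an extension $\tu$ of $u$.

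For globality, I would split $[0,T_R^+)$ into finitely many subintervals $[t_j,t_{j+1}]$ on which $\|u\|_{S(\Gamma_R(t_j,t_{j+1}))} \leq \eta$, where $\eta$ is small to be chosen. Let $\tu$ be a solution of \eqref{NLW_trunc} with some extension of $(u_0,u_1)$ as initial data, so that $\tu$ coincides with $u$ on $\Gamma_R$ by finite speed of propagation. On each subinterval the Strichartz inequality \eqref{Strichartz} applied to \eqref{NLW_trunc}, combined with \eqref{fractional_cones}, yields
\begin{equation*}
\sup_{t \in [t_j,t_{j+1}]}\|\vec{\tu}(t)\|_{\HHH} + \|\tu\|_{W((t_j,t_{j+1}))}
\lesssim \|\vec{\tu}(t_j)\|_{\HHH} + \eta^{\frac{4}{N-2}} \|\tu\|_{W((t_j,t_{j+1}))}.
\end{equation*}
For $\eta$ small the nonlinear term is absorbed into the left, so $\|\vec{\tu}(t)\|_{\HHH}$ is controlled by $\|\vec{\tu}(t_j)\|_{\HHH}$ on $[t_j,t_{j+1}]$; iterating over the finitely many subintervals bounds $\sup_{t \in [0,T_R^+)}\|\vec u(t)\|_{\HHH(R+t)}$. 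The blow-up criterion \eqref{b_up_1} then forces $T_R^+ = +\infty$, and the same iteration shows $\|\tu\|_{W((0,\infty))} < \infty$ as well as $\|\indic_{\Gamma_R} F(\tu)\|_{W'((0,\infty))} < \infty$ via \eqref{fractional_cones}.

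For scattering, set
\begin{equation*}
(v_0,v_1) := (\tu_0,\tu_1) + \int_0^{\infty} S_L(-s)\bigl(0, F(\tu(s))\indic_{\{|x|>R+|s|\}}\bigr)\,ds,
\end{equation*}
where convergence in $\HHH$ follows from the dual form of \eqref{Strichartz} together with the just-established finite $W'$-norm. Put $v_L(t) := S_L(t)(v_0,v_1)$. Comparing with the Duhamel representation of $\tu$ (coming from \eqref{NLW_trunc}), one obtains
\begin{equation*}
\vec{\tu}(t) - \vec{v}_L(t) = -\int_t^{\infty} S_L(t-s)\bigl(0, F(\tu(s))\indic_{\{|x|>R+|s|\}}\bigr)\,ds,
\end{equation*}
and Strichartz applied on $[t,\infty)$ shows the $\HHH$-norm of the right-hand side is bounded by $\|\indic_{\{|x|>R+|s|\}}F(\tu)\|_{W'((t,\infty))}$, which tends to $0$ as $t \to +\infty$ by dominated convergence. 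Since $\tu$ coincides with $u$ on $\Gamma_R$, this gives $\|\vec u(t)-\vec v_L(t)\|_{\HHH(R+|t|)} \to 0$.

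The main delicate point is handling the cutoff $\indic_{\{|x|>R+|t|\}}$ inside the fractional Besov spaces defining $W'$, since these spaces are nonlocal in dimension $N \geq 6$; this is exactly what Lemma \ref{L:char_Besov} and Remark \ref{R:frxt} have been designed to address, so the scheme parallels the standard Cauchy theory of \cite{BuCzLiPaZh13} once those tools are in place.
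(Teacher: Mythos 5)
Your proof is correct and follows essentially the same route as the paper: a Strichartz bootstrap on the extended solution of \eqref{NLW_trunc} using Lemma \ref{L:char_Besov} and the localized chain rule \eqref{fractional_cones} to get the $W$-bound, the blow-up criterion \eqref{b_up_1} for globality, and a Duhamel-tail/Cauchy argument in $\HHH$ via dual Strichartz for scattering. The only cosmetic difference is that you obtain the a priori $W$-bound by partitioning time into finitely many intervals of small $S(\Gamma_R)$-norm, whereas the paper packages the same absorption as a Gr\"onwall-type integral inequality for $\varphi(t)=\|u(t)\|_{\dot{B}^{1/2}_{\frac{2(N+1)}{N-1},2}}$.
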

\begin{proof}
\setcounter{step}{0}
\begin{step}
Slightly abusing notations, we denote by $u$ the solution to the equation \eqref{NLW_trunc} with an initial data $(\tilde{u}_0,\tilde{u}_1)\in \HHH_{\rad}$ at $t=0$ such that 
$$ (\tu_0,\tu_1)(r)=(u_0,u_1)(r),\quad r>R.$$

 We first prove that $u\in W\left( [0,T_R^+]\right)$. For any $t\in [0,T_R^+)$, we let
 $$\varphi(t)=\|u(t)\|_{\dot{B}^{\frac 12}_{\frac{2(N+1)}{N-1},2}},\quad \psi(t)=\|u(t)\|^{\frac{4}{N-2}}_{L^{\frac{2(N+1)}{N-2}}\left(\{|x|>R+|t|\}\right)},$$
 and we note that 
 $$\forall T\in (0,T_+), \quad \|\varphi\|_{L^{\frac{2(N+1)}{N-1}}(0,T)}=\|u\|_{W\left(0,T\right)}$$ and that $\|\psi\|_{L^{\frac{N+1}{2}}(0,T^+_R)}=\|u\|_{S\left( \Gamma_R(0,T_R^+) \right)}^{\frac{4}{N-2}}$ (which is finite by our assumptions). 
 By Strichartz estimates, 
 $$\forall T\in (0,T_R^+),\quad \|u\|_{W(0,T)}\lesssim \|(u_0,u_1)\|_{\HHH(R)}+\left\|\indic_{\Gamma_R(0,T)}F(u)\right\|_{W'}.$$
 Using Lemma \ref{L:char_Besov} and the fractional chain rule \eqref{fxt_frct} (together with Remark \ref{R:frxt}), we deduce
$$ \forall T\in [0,T_R^+), \quad \|\varphi\|_{L^{\frac{2(N+1)}{N-1}}(0,T)}\lesssim \|(u_0,u_1)\|_{\HHH(R)}+\|\psi \varphi\|_{L^{\frac{2(N+1)}{N+3}}(0,T)}.$$
Using a variant of Gr\"onwall's inequality (see e.g. \cite[Lemma 8.1]{FaXiCa11}), we deduce 
$$\varphi\in L^{\frac{2(N+1)}{N-1}}(0,T_+), \quad \|\varphi\|_{L^{\frac{2(N+1)}{N-1}} (0,T_+)}\leq C_M\|(u_0,u_1)\|_{\HHH(R)},$$
for some constant $C_M$ depending only on $M=\|u\|_{S(\Gamma_R(T_R^+))}$.
\end{step}
\begin{step}[Global existence]
 By the preceding step and the fractional chain rule
\eqref{fxt_frct}, we obtain $F(u)\in W'(0,T_+)$. Combining with Strichartz inequalities, we deduce
$$\limsup_{t\to T_R^+}\|\vec{u}(t)\|_{\HHH(R+|t|)}<\infty,$$
which, by the blow-up criterion \eqref{b_up_1}, is sufficient to ensure that $T_R^+=+\infty$.
 \end{step}
\begin{step}[Scattering]
 According to the preceding steps,
 $$\partial_t^2u-\Delta u=F(u)\indic_{\{|x|>R+|t|\}}\in W'(0,\infty).$$
Let $U(t)=S_L(-t)\vec{u}(t)$.
Using the dual of the Strichartz estimates \eqref{Strichartz}, we see that the preceding equation implies that $\vec{U}(t)$ has a limit $(v_0,v_1)$ in $\HHH$ as $t\to+\infty$. Letting $v_L(t)=S_L(t)(v_0,v_1)$, we obtain
$$ \lim_{t\to+\infty}\left\|\vec{\tu}(t)-\vec{v}_L(t)
\right\|_{\HHH}=0,$$
which yields the desired conclusion.
\end{step}

\end{proof}

We will also need the following long-time perturbation theory result (see \cite[Theorem 2.20]{KeMe08}, \cite[Theorem 3.6]{BuCzLiPaZh13}, \cite[Proposition A.1]{Rodriguez16}).
\begin{prop}
\label{P:LTPT}
Let $A>0$. There exists $\eta_0=\eta_0(A)$ with the following property.
 Let $R>0$, $T\in (0,\infty]$, $(u_0,u_1)\in \HHH(R)$ and $(v_0,v_1)\in \HHH(R)$. Assume that $v$ is a restriction to $\Gamma_R(0,T)$ of a function $V$ such that $\vec{V}\in C^0([0,T],\HHH)$ and 
 $$\partial_t^2V-\Delta V=\indic_{\{|x|>R+|t|\}}\left(F(V)+e_1+e_2\right),$$
 with 
 \begin{gather*}
\sup_{0\leq t\leq T}\|V(t)\|_{\HHH(R+|t|)}+\|V\|_{W(0,T)}\leq A\\
\|(u_0,u_1)-(v_0,v_1)\|_{\HHH(R)}+\|e_1\|_{W'(0,T)}+\|e_2\|_{L^1((0,T),L^2)}=\eta\leq \eta_0, 
\end{gather*}
Then the solution with initial data $(u_0,u_1)$ is defined on $\Gamma_R(T)$ and 
$$ \|v-u\|_{S(\Gamma_R(T))}\leq C\eta^{c_N},$$
for some constant $c_N\in (0,1]$ depending only on $N\geq 3$.
\end{prop}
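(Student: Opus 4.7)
The plan is to follow the standard Kenig--Merle long-time perturbation strategy (as in \cite[Theorem 2.20]{KeMe08} and \cite[Theorem 3.6]{BuCzLiPaZh13}), adapted to the exterior-wave-cone setting via Lemma \ref{L:char_Besov} and the truncated fractional chain rule of Remark \ref{R:frxt}. The core idea is to subdivide $[0,T]$ into finitely many subintervals on each of which the background function $V$ has small Strichartz norms, and then run a local perturbation / bootstrap argument successively on each subinterval, with errors controlled by $\eta$ and the initial-data difference.

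First I fix a small absolute constant $\delta>0$ (to be chosen depending only on $N$ and $A$) and partition $[0,T]=\bigcup_{j=0}^{J-1}I_j$, $I_j=[t_j,t_{j+1}]$, so that
\begin{equation*}
\|V\|_{W(I_j)}+\|V\|_{S(\Gamma_R(I_j))}\leq \delta,\qquad j=0,\ldots,J-1.
\end{equation*}
Because $V$ has the a priori bound $\|V\|_{W(0,T)}\leq A$ (and the corresponding $S$-norm is finite via Strichartz, the $\HHH(R+|t|)$-bound on $\vec V$, and the equation for $V$), the number $J$ of subintervals depends only on $A$ and $\delta$. Let $u$ be the maximal solution of \eqref{NLW} on $\Gamma_R$ with data $(u_0,u_1)$, as given by Proposition \ref{P:LWP_cone}, and set $w=u-v$ on the overlap of their domains. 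Writing the truncated equation \eqref{NLW_trunc} for $u$ and using the equation satisfied by $V$, one finds on $I_j$
\begin{equation*}
\partial_t^2 w-\Delta w=\indic_{\{|x|>R+|t|\}}\bigl(F(V+w)-F(V)-e_1-e_2\bigr),
\end{equation*}
interpreted in the Duhamel sense with initial position $\vec w(t_j)\in \HHH(R+t_j)$.

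On each $I_j$, I apply the Strichartz estimate \eqref{Strichartz} to this equation, combined with the pointwise bound
\begin{equation*}
|F(V+w)-F(V)|\lesssim |V|^{\frac{4}{N-2}}|w|+|w|^{\frac{N+2}{N-2}},
\end{equation*}
the truncation-compatible fractional estimate \eqref{fractional_cones} (obtained from \eqref{fxt_frct}, Lemma \ref{L:char_Besov} and Remark \ref{R:frxt}), and H\"older's inequality. This yields an inequality of the form
\begin{equation*}
\|w\|_{W(I_j)}+\|w\|_{S(\Gamma_R(I_j))}+\sup_{t\in I_j}\|\vec w(t)\|_{\HHH(R+|t|)}\leq C\bigl(\|\vec w(t_j)\|_{\HHH(R+t_j)}+\eta\bigr)+C\delta^{\frac{4}{N-2}}\|w\|_{W(I_j)}+C\|w\|_{W(I_j)}^{\frac{N+2}{N-2}},
\end{equation*}
so that, for $\delta$ small enough (depending only on $A$) and $\|\vec w(t_j)\|_{\HHH(R+t_j)}+\eta$ small enough, a standard continuity argument closes the bootstrap and gives $\|w\|_{S(\Gamma_R(I_j))}+\sup_{t\in I_j}\|\vec w(t)\|_{\HHH(R+|t|)}\leq C_j(\|\vec w(t_j)\|_{\HHH(R+t_j)}+\eta)$.

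Starting from $\|\vec w(0)\|_{\HHH(R)}\leq \eta$ and iterating this estimate across the $J$ subintervals, with constants that blow up geometrically but with $J=J(A)$ fixed, one obtains the global bound $\|u-v\|_{S(\Gamma_R(T))}\leq C\eta^{c_N}$ for some $c_N\in(0,1]$. The $S$-norm control and the blow-up criterion \eqref{b_up_1} (via Lemma \ref{L:scattering} applied to $u$) ensure that $u$ is indeed defined on all of $\Gamma_R(T)$. The main obstacle is purely technical: making the iteration quantitatively close in dimension $N\geq 6$, where $W$ is a fractional-Besov-valued Strichartz space and truncation by $\indic_{\{|x|>R+|t|\}}$ has to be kept under control uniformly in $R$ and $T$; this is exactly what Lemma \ref{L:char_Besov} and Remark \ref{R:frxt} are designed to provide, so no genuinely new ingredient is required beyond a careful transcription of the standard argument.
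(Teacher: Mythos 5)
Your proposal is exactly the standard long-time perturbation argument (subdivision of $[0,T]$ into finitely many intervals of small Strichartz norm for the background, Strichartz plus the truncated fractional chain rule, bootstrap, and iteration), which is precisely what the paper intends: it does not write out a proof but cites \cite[Theorem 2.20]{KeMe08}, \cite[Theorem 3.6]{BuCzLiPaZh13}, \cite[Proposition A.1]{Rodriguez16}, noting only that the truncation $\indic_{\{|x|>R+|t|\}}$ is handled by Lemma \ref{L:char_Besov} and that the extra term $e_2$ causes no difficulty. So your route coincides with the paper's; the only point to keep in mind when writing details is that for $N\geq 6$ the difference $F(V+w)-F(V)$ must be estimated in the Besov-valued space $W'$ via the H\"older-continuity-type difference estimates of \cite{BuCzLiPaZh13} (not just the pointwise bound), which is exactly where the exponent $c_N<1$ originates.
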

\begin{remark}
 In \cite{KeMe08,BuCzLiPaZh13,Rodriguez16}, $e_2=0$, but the argument easily adapts to the setting of Proposition \ref{P:LTPT}.
\end{remark}

\subsection{Profile decomposition}
\label{SS:profile}
Let $\big\{(u_{0,n},u_{1,n})\big\}_n$ be a bounded sequence of radial functions in $\HHH$. We say that it admits a profile decomposition if for all $j\geq 1$, there exist a solution $U^j_F$ to the free wave equation with initial data in $\HHH$ and sequences of parameters $\{\lambda_{j,n}\}_n\in (0,\infty)^{\NN}$, $\{t_{j,n}\}_n\in \RR^{\NN}$ such that
\begin{equation}
 \label{psdo_orth}
 j\neq k \Longrightarrow \lim_{n\to\infty}\frac{\lambda_{j,n}}{\lambda_{k,n}}+\frac{\lambda_{k,n}}{\lambda_{j,n}}+\frac{|t_{j,n}-t_{k,n}|}{\lambda_{j,n}}=+\infty,
\end{equation} 
and, denoting 
\begin{gather}
 \label{rescaled_lin}
 U^j_{F,n}(t,r)=\frac{1}{\lambda_{j,n}^{\frac N2-1}}U^j_F\left( \frac{t-t_{j,n}}{\lambda_{j,n}},\frac{r}{\lambda_{j,n}} \right),\quad j\geq 1\\
 w_{n}^J(t)=S_L(t)(u_{0,n},u_{1,n})-\sum_{j=1}^J U^j_{F,n}(t),
\end{gather} 
one has 
\begin{equation}
 \label{wnJ_dispersive}
 \lim_{J\to\infty}\limsup_{n\to\infty}\|w_n^J\|_{S(\RR)}=0.
\end{equation} 
We recall (see \cite{BaGe99}, \cite{Bulut10}) that any bounded sequence in $\HHH$ has a subsequence that admits a profile decomposition. We recall also that the properties above imply that the following weak convergences hold:
\begin{equation}
 \label{wlim_w}
 j\leq J\Longrightarrow \left( \lambda_{j,n}^{\frac{N}{2}-1}w_n^J\left(t_{j,n},\lambda_{j,n}\cdot \right),\lambda_{j,n}^{\frac{N}{2}}\partial _tw_n^J\left(t_{j,n},\lambda_{j,n}\cdot \right)\right) \xrightharpoonup[n\to\infty]{} 0 \text{ in }\HHH.
\end{equation} 

If $\{(u_{0,n},u_{1,n})\}_n$ admits a profile decomposition, we can assume, extracting subsequences and time-translating the profiles if necessary that the following limit exists:
$$\lim_{n\to\infty}\frac{-t_{j,n}}{\lambda_{j,n}}=\tau_j\in \{-\infty,0,\infty\}.$$
Using the existence of wave operator for the equation \eqref{NLW} if $\tau_j\in \{\pm\infty\}$ or the local well-posedness if $\tau_j=0$, we define the nonlinear profile $U^j$ associated to $\left(U^j_F,\{\lambda_{j,n}\}_n,\{t_{j,n}\}_n\right)$ as the unique solution to the nonlinear wave equation \eqref{NLW} such that 
$$\lim_{t\to\tau_j} \left\|\vec{U}^j(t)-\vec{U}^j_F(t)\right\|_{\HHH}=0.$$
We also denote by $U^j_n$ the rescaled nonlinear profile:
$$ U^j_n(t,r)=\frac{1}{\lambda_{j,n}^{\frac N2-1}}U^j\left( \frac{t-t_{j,n}}{\lambda_{j,n}},\frac{r}{\lambda_{j,n}} \right).$$
Then we have the following superposition principle outside the wave cone $\Gamma_0:=\left\{(t,x)\in \RR\times \RR^N\; : \;|x|>t>0\right\}.$
\begin{prop}
 \label{P:NL_profile}
 Let $\{(u_{0,n},u_{1,n})\}_n$ be a bounded sequence of in $\HHH_{\rad}$. Assume that for all $j$ such that $\tau_j=0$, the nonlinear profile $U^j$ can be extended to a solution on $\Gamma_0$ (in the sense of Definition \ref{D:sol_cone}) such that $U^j\in S(\Gamma_0)$. Then for large $n$, there is a solution $u_n$ defined on $\Gamma_0$ with initial data $\{(u_{0,n},u_{1,n})\}_n$ at $t=0$. Furthermore, denoting, for $J\geq 1$, $(t,r)\in \Gamma_0$
 $$R_n^J(t,r)=u_n(t,r)-\sum_{j=1}^J U_n^j(t,r)-w_n^J(t,r),$$
 we have 
 $$\lim_{J\to\infty} \lim_{n\to\infty}\left[\|R_n^J\|_{S(\Gamma_0)}+\sup_{t\geq 0}\left\|\vec{R}_n^J(t)\right\|_{\HHH(t)}\right]=0.$$
\end{prop}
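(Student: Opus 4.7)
I would follow the standard nonlinear profile decomposition argument, now adapted to the exterior of a wave cone. Set
$$v_n^J(t,x):=\sum_{j=1}^{J}U_n^j(t,x)+w_n^J(t,x),\qquad (t,x)\in \Gamma_0,$$
as the candidate approximate solution, and apply the long-time perturbation result Proposition \ref{P:LTPT} with $V=v_n^J$ to conclude that the exact solution $u_n$ with data $(u_{0,n},u_{1,n})$ exists on $\Gamma_0$ and stays close to $v_n^J$. Each $U_n^j$ is well defined on $\Gamma_0$ for $n$ large: when $\tau_j=\pm\infty$ this follows from the existence of wave operators and the global $S$-bound of $U^j$, while when $\tau_j=0$ it follows from the hypothesis $U^j\in S(\Gamma_0)$ combined with finite speed of propagation.

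\textbf{Uniform bounds on $v_n^J$.} Using the pseudo-orthogonality \eqref{psdo_orth} together with \eqref{wlim_w} and the Pythagorean decomposition of the initial energy, I would prove
$$\sup_{t\geq 0}\|\vec v_n^J(t)\|_{\HHH(t)}+\|v_n^J\|_{W((0,\infty))}+\|v_n^J\|_{S(\Gamma_0)}\leq A,$$
with $A$ independent of $J$ and of large $n$. This is the classical fact that the rescaled bubbles $U_n^j$ are asymptotically orthogonal in every $L^p L^q$-type Strichartz space, so that the $\ell^p$-sum of the norms $\|U^j\|_{S(\Gamma_0)}$ is controlled by the initial energy; the $w_n^J$ contribution is handled by the linear Strichartz estimate \eqref{Strichartz}.

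\textbf{Smallness of the error.} Plugging $v_n^J$ into the truncated equation gives
$$\partial_t^2 v_n^J-\Delta v_n^J=\indic_{\Gamma_0}F(v_n^J)+e_n^J,$$
and I would decompose $e_n^J$ into three terms:
(i) the cross-interaction $\indic_{\Gamma_0}\bigl[F(\textstyle\sum_j U_n^j)-\sum_j F(U_n^j)\bigr]$, expanded as a polynomial in the $U_n^j$; for $j\neq k$ each mixed product tends to zero in $L^{\frac{2(N+1)}{N+3}}_{t,x}$ by \eqref{psdo_orth}, and this $L^p L^q$ smallness is upgraded to $W'$ by interpolation against the uniform $W$-bound of the previous step, combined with the fractional chain rule \eqref{fxt_frct} and Remark \ref{R:frxt};
(ii) the remainder-cross term $\indic_{\Gamma_0}\bigl[F(v_n^J)-F(v_n^J-w_n^J)\bigr]$, small in $W'$ by Remark \ref{R:frxt} and the dispersive smallness \eqref{wnJ_dispersive};
and (iii) the mismatch between $\Gamma_0$ and the natural (rescaled) domain of each $\tau_j=0$ profile, which vanishes in the limit by dominated convergence, the symmetric difference of the two cones having shrinking measure and the integrand being uniformly dominated thanks to $U^j\in S(\Gamma_0)$.

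\textbf{Conclusion and main obstacle.} Taking $J$ then $n$ large enough so that $\|e_n^J\|_{W'((0,\infty))}+\|e_n^J\|_{L^1 L^2}<\eta_0(A)$, Proposition \ref{P:LTPT} delivers $u_n$ on $\Gamma_0$ together with
$$\|u_n-v_n^J\|_{S(\Gamma_0)}+\sup_{t\geq 0}\|\vec{u}_n(t)-\vec{v}_n^J(t)\|_{\HHH(t)}\leq C\eta^{c_N},$$
and the stated conclusion follows by letting $n\to\infty$ and then $J\to\infty$. I expect the principal technical difficulty to lie in the cross-term analysis (i) in high dimensions $N\geq 6$, where $W'$ is a genuine fractional Besov space and pseudo-orthogonality—essentially an $L^p L^q$ phenomenon—does not transfer directly; one must combine the $L^p L^q$ smallness of the mixed products with the uniform $W$-bound of the previous step, the cutoff continuity of Lemma \ref{L:char_Besov}, and the fractional chain rule to produce the required $W'$ smallness outside the cone. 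The secondary difficulty in (iii) is purely geometric and resolved by dominated convergence.
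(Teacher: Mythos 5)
Your proposal is correct and follows essentially the same route as the paper, which omits the detailed argument and refers to the standard superposition/perturbation proof (Rodriguez, Proposition 2.3): approximate solution $\sum_j U_n^j+w_n^J$, uniform Strichartz bounds from pseudo-orthogonality, smallness of the cross-terms and of the $w_n^J$ contribution in $W'$ and $L^1L^2$, and then the long-time perturbation result Proposition \ref{P:LTPT} outside the cone, together with the case discussion on $\tau_j$ ensuring each $U_n^j$ is defined on $\Gamma_0$. Your identification of the delicate point in dimensions $N\geq 6$ (upgrading $L^pL^q$ pseudo-orthogonality to the fractional space $W'$ via the uniform $W$-bound, Lemma \ref{L:char_Besov} and the chain rule \eqref{fxt_frct}) matches the difficulty the paper itself flags.
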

We omit the proof, which is similar to the proof when the solution is not restricted to the exterior of a wave cone (see \cite[Proposition 2.3]{Rodriguez16}).

Let us emphasize the fact that under the assumptions of Proposition \ref{P:NL_profile}, the profiles $U_n^j(t,r)$ are well-defined on $\Gamma_0$. 
\begin{itemize}
 \item if $\tau_j=0$, then $U^j_n(t,r)=\frac{1}{\lambda_{j,n}^{\frac{N}{2}-1}}U^j\left(\frac{t}{\lambda_{j,n}},\frac{r}{\lambda_{j,n}}\right)$, which is defined on $\Gamma_0$ since by assumption $U^j$ is defined on $\Gamma_0$. 
 \item if $\tau_j=+\infty$, then by definition $U^j$ is globally defined in the future, so that $U^j_n$ is well defined for $t\geq 0$ and large $n$.
 \item if $\tau=-\infty$, we know that $U^j$ is globally defined in the past. Let $T_+$ be the maximal interval existence of $U^j$. If $T_+=+\infty$, then $U^j_n$ is of course defined on $\Gamma_0$. If $T_+$ is finite, we can take $R$ large so that, $\|\vec{U}^j(T_+-1)\|_{\HHH(R)}$ is small, so that by the small data well-posedness theory (Proposition \ref{P:LWP_cone}), $U^j$ is defined on the set $\Gamma_R(T_+-1,\infty)$. For large $n$, we have
 \begin{multline*}
 \left\{ \left( \frac{t-t_{j,n}}{\lambda_{j,n}},\frac{r}{\lambda_{j,n}} \right),\; r>t>0\right\}=\left\{(s,\rho),\; \rho\geq \tau +\frac{t_{j,n}}{\lambda_{j,n}}\geq 0\right\}
 \\
 \subset \Gamma_R(T_+-1,\infty) \cup \Big((-\infty,T_+)_{\tau}\times (0,\infty)_{\rho}\Big),
 \end{multline*}
which shows again that $U^j_n(t,r)$ is well-defined for $(t,r)\in \Gamma_0$ and large $n$.
\end{itemize}
\section{Non-radiative solutions}
\label{S:exterior}

In this section we prove the main results of the paper: Proposition \ref{P:Neven} and Theorems \ref{T:maintheo} and \ref{T:maintheo2}.
After preliminaries on the free wave equations, we state and prove in subsection \ref{SS:asympt} three results that imply Theorems \ref{T:maintheo} and \ref{T:maintheo2}. 
\begin{defi}
\label{D:non-radiative}
Let $u$ be a solution of the nonlinear wave equation \eqref{NLW} (or another wave equation considered in this paper), with initial data at $t=0$. We say that $u$ is \emph{non-radiative} if $u$ is defined on $\{|x|>|t|\}$
$$\sum_{\pm} \lim_{t\to\pm \infty} \int_{|x|\geq |t|}|\nabla_{t,x}u(t,x)|^2\,dx=0.$$
We say that $u$ is \emph{weakly non-radiative} if for large $R>0$, $u$ is defined on $\{|x|>|t|+R\}$.
$$\sum_{\pm} \lim_{t\to\pm \infty} \int_{|x|\geq |t|+R}|\nabla_{t,x}u(t,x)|^2\,dx=0.$$
\end{defi}

\subsection{Nonradiative solutions for the free wave equation in odd space dimension}
When $N\geq 3$ is odd, non-radiative and weakly nonradiative solution of the free wave equation \eqref{LW}
are well-understood. As recalled in the introduction, it follows from the equirepartition of the energy\eqref{equirepartition} proved in \cite{DuKeMe12},
that the only non-radiative solution of \eqref{LW} is zero. 

The bound \eqref{linear_exterior} 
proved in \cite{KeLaLiSc15}, implies that the weakly non-radiative solutions are the ones that coincide with elements of $P(R)$ for large $R$, where $P(R)$ is defined as the subspace of $\HHH(R)$ spanned by
$$\PPP=\left\{\left(\frac{1}{r^{N-2k_1}},0\right),\left(0, \frac{1}{r^{N-2k_2}}\right),\quad 1\leq k_1\leq \ent{\frac{N+2}{4}},\; 1\leq k_2\leq \ent{\frac{N}{4}}\right\},$$
where $\ent{a}$ denotes the integer part of $a$. 

One can check that the dimension of $P(R)$ is exactly 
$$m=\frac{N-1}{2}.$$ Furthermore, by direct computations
\begin{align*}
\left\|\left( \frac{1}{r^{N-2k_1}},0 \right)\right\|^2_{\HHH(R)}&=\frac{(N-2k_1)^2}{(N-4k_1+2)R^{N-4k_1+2}},\\
\left\|\left(0, \frac{1}{r^{N-2k_2}} \right)\right\|^2_{\HHH(R)}&=\frac{1}{(N-4k_2)R^{N-4k_2}}
\end{align*} 
As in the introduction, we denote the elements of $\PPP$ as $(\Xi_{k})_{k\in \llbracket 1,m\rrbracket}$, choosing $\Xi_k$ so that
\begin{equation}
 \label{normXi}
 \left\|\Xi_k\right\|_{\HHH(R)}=\frac{c_k}{R^{k-\frac{1}{2}}},
\end{equation} 
for some constant $c_k\neq 0$. Thus $\Xi_m(r)=\left(r^{2-N},0\right)$, and
$$\Xi_1(r)=
\begin{cases}
\left(r^{-m},0\right)&\text{ if }m\text{ is odd},\\
\left(0,r^{-(m+1)}\right)&\text{ if }m\text{ is even.}
\end{cases}
$$
The norm of an element of $P(R)$ in $\HHH(R)$ is equivalent to the sum of the absolute values of its coordinates in $\PPP$. This is uniform with respect to $R$, up to some powers of $R$:
\begin{claim}
\label{Cl:coord}
 Let $U\in P(R)$ and denote by $\left( \theta_k(R) \right)_{1\leq k\leq m}$ its coordinates in $\PPP$. Then
 $$\left\|U\right\|_{\HHH(R)}\approx \sum_{k=1}^m\frac{\left|\theta_{k}(R)\right|} {R^{k-1/2}},$$
 where the implicit constant is independent of $R>0$.
\end{claim}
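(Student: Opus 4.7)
The upper bound is immediate from the triangle inequality and the formula \eqref{normXi}:
$$ \|U\|_{\HHH(R)} \leq \sum_{k=1}^m |\theta_k(R)| \, \|\Xi_k\|_{\HHH(R)} = \sum_{k=1}^m \frac{c_k \, |\theta_k(R)|}{R^{k-1/2}}.$$
So the only content of the claim is the matching lower bound, and my plan is to obtain it by a rescaling argument that transfers the problem to the fixed finite-dimensional space $P(1)$, where all norms are automatically equivalent.

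First, I would introduce the canonical dilation $T_R \colon \HHH(R) \to \HHH(1)$ defined by
$$ T_R(u_0,u_1)(\rho) = \bigl( R^{\frac{N}{2}-1} u_0(R\rho),\; R^{\frac{N}{2}} u_1(R\rho) \bigr),$$
and observe that $T_R$ is an isometry (this is just the $\dot H^1 \times L^2$ scaling invariance). Next, I would apply $T_R$ to each generator. For $\Xi_k = (r^{-\alpha_k},0)$ one gets $T_R \Xi_k = R^{\frac{N}{2}-1-\alpha_k}(\rho^{-\alpha_k},0)$, and similarly in the second component. By the explicit norm computations displayed just before \eqref{normXi}, one matches exponents and reads off that
$$ T_R \Xi_k = R^{-(k-1/2)}\, \Xi_k^\sharp,$$
where $\Xi_k^\sharp \in \HHH(1)$ is a fixed element independent of $R$ (namely $\Xi_k$ with $r$ replaced by $\rho$). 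This is the only explicit computation required.

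Applying $T_R$ to $U = \sum_{k=1}^m \theta_k(R)\,\Xi_k$ and using that $T_R$ is an isometry, I obtain
$$ \|U\|_{\HHH(R)} = \Bigl\| \sum_{k=1}^m \frac{\theta_k(R)}{R^{k-1/2}}\, \Xi_k^\sharp \Bigr\|_{\HHH(1)}.$$
The family $(\Xi_k^\sharp)_{1\le k \le m}$ consists of $m$ distinct monomials in $\rho$ placed in distinct components of $(u_0,u_1)$, hence is linearly independent in $\HHH(1)$. Consequently, on the $m$-dimensional subspace it spans, all norms are equivalent, in particular
$$ \Bigl\| \sum_{k=1}^m \mu_k \Xi_k^\sharp \Bigr\|_{\HHH(1)} \approx \sum_{k=1}^m |\mu_k|.$$
Applying this with $\mu_k = \theta_k(R)/R^{k-1/2}$ yields the desired lower bound, with an implicit constant depending only on $N$ and the fixed family $\PPP$, not on $R$.

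There is no real obstacle: the computation $T_R \Xi_k = R^{-(k-1/2)} \Xi_k^\sharp$ is forced by the definition of the $c_k$ in \eqref{normXi}, and once this is established, uniformity in $R$ follows for free from the fact that the rescaled problem lives in a fixed finite-dimensional space.
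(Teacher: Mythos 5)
Your proposal is correct and follows essentially the same route as the paper's own proof: rescale by the isometric dilation to reduce to the fixed exterior domain $R=1$, use the homogeneity of the generators to see that each $\Xi_k$ picks up exactly the factor $R^{-(k-1/2)}$, and conclude by equivalence of norms on the finite-dimensional space $P(1)$. No gaps; the only cosmetic difference is that you derive the scaling factor by matching exponents of the monomials, while the paper reads it off from the norm identity \eqref{normXi}.
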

\begin{proof}
Using the equivalence of norms in finite dimension, we see that if $U\in P(1)$
\begin{equation}
\label{normP1}
\|U\|_{\HHH(1)}\approx \sum_{k=1}^m |\theta_k(1)|. 
\end{equation} 
We now assume that $U=(u,v)\in \HHH(R)$, and consider:
$$U_{R^{-1}}(x)=\left(R^{\frac{N}{2}-1}u(Rx),R^{\frac{N}{2}}v(Rx)\right).$$
Note that $U_{R^{-1}}\in \HHH(1)$, and that 
$$ \left\|U_{R^{-1}}\right\|_{\HHH(1)}=\left\| U\right\|_{\HHH(R)}.$$
We see in particular the 
$$ \frac{c_k}{R^{k-1/2}}=\left\|\Xi_k\right\|_{\HHH(R)}=\left\|(\Xi_{k})_{R^{-1}}\right\|_{\HHH(1)},$$
which implies, using the homogeneity of $\Xi_k$,
$$(\Xi_{k})_{R^{-1}}=\frac{1}{R^{k-1/2}} \Xi_{k}.$$
Since $U_=\sum_{k=1}^m \theta_k(R)\Xi_k$, we see that
$$U_{R^{-1}}=\sum_{k=1}^m \theta_k(R) (\Xi_{k})_{R^{-1}}=\sum_{k=1}^m \frac{\theta_k(R)}{R^{k-1/2}} \Xi_k,$$
and the conclusion of the claim follows from \eqref{normP1}.
\end{proof}

\subsection{Nonradiative solutions for the free wave equation in even space dimension}
In this subsection we prove Proposition \ref{P:Neven}. We assume $N\geq 4$ is even, and let $u$ be a solution of the free wave equation \eqref{LW}, with initial data $(u_0,u_1)$. We first assume that $u$ is radial. Denote by $(\hat{u}_0,\hat{u}_1)$ the Fourier transform of $(u_0,u_1)$ in~$\RR^N$. Then by \cite{CoKeSc14}, one has, for some constant $c_N>0$ 
\begin{multline} 
\label{ext_energy}
\sum_{\pm}\lim_{t\to\pm\infty} c_N\int_{|x|\geq |t|}  | \nabla_{t,x} u(t)|^2\,dx = 
\pi\int  (\rho^2|\hat{u}_0(\rho)|^2+|\hat{u}_1(\rho)|^2)\rho^{N-1}\, d\rho \\
 +(-1)^{\frac{N}{2}} \left( \int H(\rho^{\frac{N+1}{2}} \hat u_0)\, \rho^{\frac{N+1}{2}} \hat u_0\,d\rho - \int H( \rho^{\frac{N-1}{2}} \hat u_1)\rho^{\frac{N-1}{2}} \hat u_1 d\rho\right).
\end{multline}
Here $H$ is the Hankel  transform $H$ on the half-line $(0,\infty)$: 
\begin{equation*} 
(H\varphi)(\rho):=\int_0^\infty \frac{\varphi(\sigma)}{\rho+\sigma}\, d\sigma.
\end{equation*}
We claim that $H$ is bounded from $L^2$ to $L^2$ with operator norm equal to $\pi$, and that the operator norm is not attained, i.e
\begin{equation}
\label{boundH}
\forall f\in L^2((0,\infty))\setminus\{0\},\quad \|H f\|_{L^2}<\pi \|f\|_{L^2}. 
\end{equation} 
Assuming \eqref{boundH}, we obtain, as a consequence of \eqref{ext_energy} and Cauchy-Schwarz inequality, that if $(u_0,u_1)\neq (0,0)$, then 
$$\sum_{\pm}\lim_{t\to\pm\infty} c_N\int_{|x|\geq |t|}  | \nabla_{t,x} u(t)|^2\,dx>0.$$

The inequality \eqref{boundH} is classical. We give a proof for the sake of completeness. Let $\LLL$ be the Laplace transform: $\LLL f(s):=\int_0^{\infty} f(t)e^{-st}dt$. It is easy to check that $H=\LLL^2$. We are thus reduced to prove:
$$ \forall f\in L^2(0,\infty)\setminus \{0\}, \quad \left\|\LLL f\right\|_{L^2}<\sqrt{\pi}\|f\|_{L^2}.$$
Letting $g=\LLL f$, we obtain by Cauchy-Schwarz inequality:
\begin{multline}
\label{ineg_interm}
|g(s)|^2= \left( \int_0^{\infty} f(t)e^{-st/2}t^{1/4}t^{-1/4}dt \right)^2\leq \int_0^{\infty} |f(t)|^2e^{-st}t^{1/2}\,dt\int_0^{\infty} e^{-st}t^{-1/2}\,dt\\
\leq \sqrt{\pi}s^{-1/2} \int_0^\infty |f(t)|^2e^{-st}t^{1/2}\,dt,
\end{multline}
where we have used 
\begin{equation}
\label{calcul}
\int_0^{\infty} e^{-st} s^{1/2}t^{-1/2}\,dt=\int_0^{\infty}e^{-u}u^{-1/2}\,du=\sqrt{\pi}. 
\end{equation} 
Integrating \eqref{ineg_interm}, we obtain
\begin{equation}
\label{bound_Laplace}
\int_0^{\infty}|g(s)|^2\,ds \leq \sqrt{\pi}\int_0^{\infty} \int_0^{\infty} |f(t)|^2e^{-st}t^{1/2}s^{-1/2}\,dtds\leq \pi\int_0^{\infty}|f(t)|^2\,dt, 
\end{equation} 
where we have used \eqref{calcul} again. 

Assume now that equality holds in \eqref{bound_Laplace}. Then there is equality in \eqref{ineg_interm} for almost all $s>0$. This imposes that for almost all $s>0$, there is a real number $\lambda(s)$ such that for almost all $t>0$,
$$ f(t)t^{1/4}=\lambda(s)t^{-1/4}.$$
Thus $f(t)=\lambda/t^{1/2}$ for some $\lambda\in \RR$, a contradiction with the fact that $f$ is in $L^2$, unless $\lambda=0$ (and thus $f=0$ a.e.).

It remains to prove Proposition \ref{P:Neven} when $(u_0,u_1)$ is not assumed to be radial. We will reduce to the radial case by expanding the solution $u$ into spherical harmonics. Let $\left(\Phi_{k}\right)_{k\in \NN}$ be a Hilbert basis of spherical harmonics, and $(\nu_k)_{k\in \NN}$ the sequence of degrees of $\Phi_k$, that we can assume to be nondecreasing. Thus $\Phi_k$ is the restriction to $S^{N-1}$ of a homogeneous harmonic polynomial of degree $\nu_k\in \NN$ and 
$$-\Delta_{S^{N-1}} \Phi_k=\nu_k(\nu_k+N-2)\Phi_k.$$
We let, for $t\in \RR$ and $r>0$,
 $$u_k(t,r)= \int_{S^{N-1}} \Phi_k(\theta)u(t,r\theta)\,d\sigma(\theta).$$
We have $\vec{u}_k\in C^0(\RR,\HHH)$ and
 $\partial_t^2u_k-\Delta u_k+\frac{\nu_k(\nu_k+N-2)}{r^2}u_k=0.$
Letting $v_k=r^{-\nu_k} u_{k}$, we obtain:
\begin{equation*}
\partial_t^2v_k-\Delta_D v_k=0,
\end{equation*} 
with initial data
\begin{equation*}
\vec{v}_{k\restriction t=0}=(v_{0k},v_{1k}):=r^{-\nu_k}\vec{u}_k(0), 
\end{equation*} 
where $\Delta_{D_k}=\partial_r^2+\frac{D_k-1}{r}\partial_r$
is the radial part of the Laplace operator in dimension $D_k=N+2\nu_k$. We thus identify $\vec{v}_k(t)$, for $t\in \RR$, with a radial function on $\RR^{D_k}$. Noting that $(v_{0k},v_{1k})$ is in $\dot{H}^1\left(\RR^{D_k}\right)\times L^2\left( \RR^{D_k} \right)$, we obtain, 
from the radial case treated above, that if $v_k$ is not the zero solution, we have
$$\sum_{\pm}\lim_{t\to+\infty} \int_{|t|}^{+\infty} |\partial_{t,r}v_k(r)|^2r^{D_k-1}dr>0.$$
A direct computation, using that 
$\lim_{t\to \pm\infty} \int_{\RR^N}\frac{1}{|x|^2}|u(t,x)|^2\,dx=0,$
shows that
$$\sum_{\pm}\lim_{t\to+\infty} \int_{|t|}^{+\infty} |\partial_{t,r}u_k(r)|^2r^{N-1}dr>0.$$
This implies the conclusion of Proposition \ref{P:Neven} in this case also.

\subsection{Asymptotic behaviour for non-radiative solutions of the nonlinear wave equation}
\label{SS:asympt}
Fix now $N\geq 3$ odd. In this subsection we prove Theorems \ref{T:maintheo} and \ref{T:maintheo2} as a consequence of the following results.
\begin{prop} 
\label{P:asymptotic_NR}
Let $u$ be a weakly non-radiative solution of \eqref{NLW}. Then there exists $k_0\in\llbracket 1,m\rrbracket$ and $\ell\in \RR$, with $\ell\neq 0$ if $k_0<m$ and such that
\begin{equation}
 \label{asymptotic_NR}
\left\|(u_0,u_1)-\ell \,\Xi_{k_0}\right\|_{\HHH(R)}\lesssim \max\left(\frac{1}{R^{(k_0-\frac{1}2)\frac{N+2}{N-2}}},\frac{1}{R^{k_0+\frac 12}}\right),
\end{equation} 
where the implicit constant depends on $u$.
Furthermore if $k_0=m$ then for almost every large $r$,
$$(u_0,u_1)(r)=\begin{cases}
0 & \text{ if }\ell=0\\
  \mathrm{sign}(\ell)\big(W_{(\lambda)}(r),0\big)& \text{ if }\ell \neq 0,             
              \end{cases}$$
where $\lambda$ is defined by
$\lambda^{\frac N2-1}\Big(N(N-2)\Big)^{\frac N2-1}=|\ell|$.
 \end{prop}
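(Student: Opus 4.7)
The plan is to combine the linear exterior energy bound of Kenig--Lawrie--Liu--Schlag from \cite{KeLaLiSc15} with the smallness of the Duhamel nonlinear correction at large radii, yielding a master inequality that forces $(u_0,u_1)$ to be essentially contained in $P(R)$. An iterative peeling argument then extracts the individual coefficients, and a rigidity result for a wave equation with potential handles the borderline case $k_0=m$.

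\emph{Step 1: master estimate.} For $R$ sufficiently large, I will establish
\begin{equation*}
\|\pi_{P(R)^\perp}(u_0,u_1)\|_{\HHH(R)}\lesssim \|(u_0,u_1)\|_{\HHH(R)}^{\frac{N+2}{N-2}},
\end{equation*}
with an implicit constant depending on $u$. Set $n(R):=\|(u_0,u_1)\|_{\HHH(R)}$, which tends to $0$ as $R\to+\infty$ because $(u_0,u_1)\in\HHH(R_0)$. For $R$ large enough, the local well-posedness and scattering theory in $\Gamma_R$ (Proposition~\ref{P:LWP_cone} and Lemma~\ref{L:scattering}) produce a solution that scatters on $\Gamma_R$ with $\|u\|_{S(\Gamma_R)}\lesssim n(R)$. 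Write $u=u_L+h$, where $u_L$ is the linear evolution of an $\HHH$-extension of $(u_0,u_1)$; by the Strichartz estimate \eqref{Strichartz} and \eqref{fractional_cones}, one gets $\sup_t\|\vec h(t)\|_{\HHH(R+|t|)}\lesssim n(R)^{(N+2)/(N-2)}$. Applying the exterior energy identity \eqref{linear_exterior} to $u_L$, the weak non-radiative assumption (which is inherited at every scale $R\ge R_0$), and the triangle inequality under the integral, one obtains the master estimate.

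\emph{Step 2: iterative peeling.} Decompose $\pi_{P(R)}(u_0,u_1)=\sum_{k=1}^m\theta_k(R)\Xi_k$. Claim~\ref{Cl:coord} gives $\|\pi_{P(R)}(u_0,u_1)\|_{\HHH(R)}\approx\sum_k|\theta_k(R)|R^{-(k-1/2)}$, and combined with the master estimate we get $\sum_k|\theta_k(R)|R^{-(k-1/2)}\lesssim n(R)$. A bootstrap then applies: once we know $n(R)\lesssim R^{-\alpha}$, the master estimate upgrades the remainder to $R^{-\alpha(N+2)/(N-2)}$, and comparing the $P$-projections at scales $R$ and $R'>R$ via Claim~\ref{Cl:coord} forces $|\theta_k(R)-\theta_k(R')|\lesssim R^{k-1/2-\alpha(N+2)/(N-2)}$ whenever this exponent is negative. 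Since $(N+2)/(N-2)>1$, one produces Cauchy sequences: each $\theta_k(R)$ converges to some $\ell_k\in\RR$ as $R\to\infty$, with rate $|\theta_k(R)-\ell_k|\lesssim R^{k-1/2-\alpha(N+2)/(N-2)}$. Let $k_0=\min\{k:\ell_k\neq 0\}$, or $k_0=m$ if all $\ell_k$ vanish; set $\ell=\ell_{k_0}$. Rewriting
\begin{equation*}
(u_0,u_1)-\ell\,\Xi_{k_0}=\sum_{k>k_0}\ell_k\Xi_k+g,
\end{equation*}
we estimate the linear tail by $R^{-(k_0+1/2)}$ (dominated by $k=k_0+1$) and the nonlinear remainder $g$ by $R^{-(k_0-1/2)(N+2)/(N-2)}$, which is exactly the announced bound.

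\emph{Step 3: identification with a stationary solution when $k_0=m$.} Here $\|(u_0,u_1)-\ell\,\Xi_m\|_{\HHH(R)}\lesssim R^{-(N+2)/2}$. If $\ell=0$, one more application of the master estimate to the tail forces $n(R)=0$ for large $R$, i.e.\ $(u_0,u_1)=0$ there. If $\ell\neq 0$, pick $\lambda>0$ with $\lambda^{N/2-1}(N(N-2))^{N/2-1}=|\ell|$ and $\sigma=\sgn\ell$, so that the stationary solution $\sigma W_{(\lambda)}$ has the matching leading asymptotic $\ell\,r^{2-N}$. The difference $v:=u-\sigma W_{(\lambda)}$ is a weakly non-radiative solution of
\begin{equation*}
(\partial_t^2-\Delta)v-\tfrac{N+2}{N-2}\,|W_{(\lambda)}|^{\frac{4}{N-2}}v=O(v^2),
\end{equation*}
with initial data satisfying $\|\vec v(0)\|_{\HHH(R)}\lesssim R^{-(N+2)/2}$. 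The rigidity result for the radial linear wave equation with potential $\tfrac{N+2}{N-2}|W_{(\lambda)}|^{4/(N-2)}$ established in \cite{DuKeMe19Pa} then yields $v\equiv 0$ for large $r$, completing the identification.

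\emph{Main obstacle.} The delicate point is Step 2: because the basis $\PPP$ is not $\HHH(R)$-orthogonal and the projection $\pi_{P(R)}$ depends nontrivially on the scale, proving convergence of each $\theta_k(R)$ requires a scale-by-scale bootstrap that uses crucially the nonlinear gain $(N+2)/(N-2)>1$. The competing exponents $(k_0-1/2)(N+2)/(N-2)$ and $k_0+1/2$ in the final estimate reflect exactly this interplay between the nonlinear Duhamel error at scale $k_0$ and the first linear singular contribution at the next scale $k_0+1$.
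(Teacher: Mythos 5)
Your Step 1 is the paper's master estimate and is fine, but Step 2 contains a genuine gap. You assert that every coefficient $\theta_k(R)$ converges as $R\to\infty$ and then subtract a ``linear tail'' $\sum_{k>k_0}\ell_k\Xi_k$. This is not available, and is in general false: the dyadic increment bound produced by the master estimate is $|\theta_k(R)-\theta_k(2R)|\lesssim R^{k-\frac12}\,n(R)^{\frac{N+2}{N-2}}$, and once a nonzero limit appears at level $k_0$ the decay of $n(R)$ saturates at $R^{-(k_0-\frac12)}$; hence for indices $k$ with $k-\frac12\geq (k_0-\frac12)\frac{N+2}{N-2}$ the increments are not summable and $\theta_k(R)$ may grow with $R$ --- only the weighted tails $A_k(R)=\sum_{j\geq k}|\theta_j(R)|R^{-(j-\frac12)}$ can be controlled. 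This is why the paper does not prove convergence of all coefficients: it runs an induction on $k_0$, showing at each stage, via the recursive inequality \eqref{MZ31} and the sequence Claim \ref{Cl:sequences}, that $A_{k_0}(R)\lesssim R^{-(k_0-\frac12)}$, that $\theta_{k_0}(R)$ alone converges, and that $A_{k_0+1}(R)$ obeys the max bound in \eqref{asymptotic_NR}, the induction stopping at the first nonzero limit. Your bootstrap is also unseeded: a priori one only knows $n(R)=o(1)$, and the initial rate $R^{-1/2}$ must itself be extracted from the $2^{-1/2}$ contraction of $\|\Xi_k\|_{\HHH(R)}$ under doubling. (Minor slip: when $k_0=m$ the bound is $R^{-(m+\frac12)}=R^{-N/2}$, not $R^{-(N+2)/2}$.)

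Step 3 is also incomplete in both subcases. For $\ell=0$, iterating the master estimate only yields decay faster than any power of $R$; no number of such applications gives exact vanishing. The missing ingredient is the reverse ``doubling'' inequality deduced from \eqref{MZ31}, namely $\sum_k|\theta_k(2R)|\geq\frac12\sum_k|\theta_k(R)|$ for large $R$, which forbids super-polynomial decay unless the coefficients vanish identically; combining the two is how the paper gets $(u_0,u_1)=0$ for large $r$. For $\ell\neq0$, you invoke a rigidity theorem for the radial wave equation with potential from \cite{DuKeMe19Pa}, but the result of that paper used here (Proposition \ref{P:support}) requires compactly supported data --- which is precisely the conclusion you are trying to establish, so the argument is circular; no rigidity for weakly nonradiative solutions with merely polynomially decaying data is available, and in any case the conclusion could only be vanishing for large $r$. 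The paper instead sets $h=u-W$ (after rescaling), uses $|F(W+h)-F(W)-F(h)|\lesssim W^{\frac{4}{N-2}}|h|$ and exterior Strichartz estimates to obtain the linear-gain bound $\|\pi_{P(R)^{\bot}}\vec h(0)\|_{\HHH(R)}\lesssim R^{-2}\|\pi_{P(R)}\vec h(0)\|_{\HHH(R)}$, and then repeats the super-polynomial-decay plus doubling argument to conclude $\vec h(0)=0$ for large $r$; Proposition \ref{P:support} enters only later, in Corollary \ref{C:uniqueness_NR}, once compact support is known.
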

\begin{remark}
\label{R:asymptotic_NR}
 We can give an estimate on the implicit constant in \eqref{asymptotic_NR}. Let $\eps>0$ be a small parameter, and let $R_{\eps}$ such that $u$ is well-defined for $|x|>R_{\eps}+|t|$ and 
\begin{equation}
 \label{defReps}
 \|(u_0,u_1)\|_{\HHH(R_{\eps})}\leq \eps.
\end{equation} 
 Then
\begin{equation}
\label{asymptotic_NRbis}
\left\|(u_0,u_1)-\ell \,\Xi_{k_0}\right\|_{\HHH(R)}\leq 
C\,\eps\, \max\left\{\left(\frac{R_{\eps}}{R}\right)^{(k_0-\frac{1}2)\frac{N+2}{N-2}},\left(\frac{R_{\eps}}{R}\right)^{k_0+\frac 12}\right\} ,
\end{equation} 
where the constant $C$ depends only on $N$. 
\end{remark}
\begin{prop}
 \label{P:l_no_t}
Let $u$ be a weakly non-radiative solution of \eqref{NLW} such that the essential support of $(u_0,u_1)$ is not compact. For $T\in \RR$, let $k_0(T)$, $\ell(T)$ the parameters defined in Proposition \ref{P:asymptotic_NR} for the solution $u(T+\cdot)$. Then $k_0$ and $\ell$ are independent of $T$. 
\end{prop}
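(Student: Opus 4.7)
The plan is to show that $T\mapsto(k_0(T),\ell(T))$ is locally constant on $\RR$; integrality of $k_0$ and connectedness of $\RR$ then force global constancy. First observe that the non-compact-support hypothesis propagates in time by finite speed of propagation: if $\vec{u}(T_1)$ were compactly supported for some $T_1$, then $\vec{u}(0)$ would be as well. Hence by the $k_0=m$ clause of Proposition \ref{P:asymptotic_NR}, $\ell(T)\neq 0$ for every $T\in\RR$.

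Fix $T_0\in\RR$, set $k_0:=k_0(T_0)$, $\ell_0:=\ell(T_0)$, and
$\alpha_0:=\min\left((k_0-\tfrac12)\tfrac{N+2}{N-2},\,k_0+\tfrac12\right) > k_0-\tfrac12$.
For $T$ close to $T_0$ I will apply the Duhamel formula in the exterior-cone framework of Section \ref{S:preliminaries}:
\begin{equation*}
 \vec{u}(T) = \ell_0\,S_L(T-T_0)\Xi_{k_0} + S_L(T-T_0)w_0 + \NNN(T),
\end{equation*}
where $w_0:=\vec{u}(T_0)-\ell_0\Xi_{k_0}$ and $\NNN(T):=\int_{T_0}^{T}S_L(T-s)(0,F(u(s)))\,ds$. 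The key algebraic input is that the linear evolution of $\Xi_{k_0}$ is a polynomial in $T-T_0$ whose constant term is $\Xi_{k_0}$ itself, every other coefficient having strictly faster decay: since $\Xi_{k_0}$ is polyharmonic, the series defining $S_L$ terminates, and each iterated Laplacian of the scalar components $r^{-(N-2k_1)}$ or $r^{-(N-2k_2)}$ yields either zero or a polyharmonic scalar of smaller homogeneity. Thus
\begin{equation*}
S_L(T-T_0)\Xi_{k_0}=\Xi_{k_0}+r(T-T_0),\qquad r(T-T_0)\in\vect(\Xi_{k_0+1},\ldots,\Xi_m),
\end{equation*}
and $\|r(T-T_0)\|_{\HHH(R)}\leq C(|T-T_0|)\,R^{-(k_0+1/2)}$. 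The middle term is controlled by energy conservation combined with Proposition \ref{P:asymptotic_NR} at time $T_0$: $\|S_L(T-T_0)w_0\|_{\HHH(R)}\leq \|w_0\|_{\HHH(R-|T-T_0|)}\lesssim R^{-\alpha_0}$ for $R$ large compared to $|T-T_0|$.

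The nonlinear contribution $\NNN(T)$ will be estimated using the exterior-cone Strichartz machinery of Section \ref{S:preliminaries}, the fractional-Leibniz bound \eqref{fractional_cones}, and the scalar asymptotic $|u(s,r)|\lesssim r^{-\alpha}$ on $\{|x|>R+|s-T_0|\}$ (where $\alpha$ is the homogeneity of the scalar component of $\Xi_{k_0}$); this will yield $\|\NNN(T)\|_{\HHH(R)}\lesssim R^{-(k_0-1/2)\frac{N+2}{N-2}}$. Summing the three contributions gives
\begin{equation*}
 \|\vec{u}(T)-\ell_0\Xi_{k_0}\|_{\HHH(R)}\lesssim R^{-\alpha'}\quad\text{with }\alpha'>k_0-\tfrac12.
\end{equation*}
To conclude, comparing with the asymptotic at time $T$ furnished by Proposition \ref{P:asymptotic_NR}, the difference $\ell_0\Xi_{k_0}-\ell(T)\Xi_{k_0(T)}$ is an element of $P(R)$ with $\HHH(R)$-norm $O(R^{-\alpha'})$, so Claim \ref{Cl:coord} together with $\|\Xi_k\|_{\HHH(R)}\sim R^{-(k-1/2)}$ forces every coordinate to decay faster than any $R^{-(k-1/2)}$; using $\ell_0,\ell(T)\neq 0$, the only way to match decay rates is to have $k_0(T)=k_0$ and $\ell(T)=\ell_0$. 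The main obstacle in this plan is the quantitative bound on $\NNN(T)$, but it is a variant of the nonlinear analysis already performed in the proof of Proposition \ref{P:asymptotic_NR} (the exponent $(k_0-1/2)(N+2)/(N-2)$ in the error there comes from exactly the same mechanism), so it should reuse the Strichartz apparatus of Section \ref{S:preliminaries} without essentially new ingredients.
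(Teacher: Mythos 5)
Your argument is correct, but it follows a genuinely different route from the paper. The paper's proof never tracks the profile itself: it compares the scalar quantities $\|\vec{u}(T)\|_{\HHH(R+|T|)}$ and $\|\vec{u}(0)\|_{\HHH(R)}$ using the monotonicity of exterior free energy (finite speed of propagation plus energy conservation) together with the small-data perturbation bound, reads off the leading asymptotics $c_{k_0(T)}\ell(T)R^{-k_0(T)+\frac12}$ furnished by Proposition \ref{P:asymptotic_NR} on both sides, and obtains the one-sided conclusions $k_0(T)\geq k_0(0)$ and $\ell(T)\leq \ell(0)$ when $k_0(T)=k_0(0)$; the reverse inequalities come from applying the same argument to the time-reversed solution $(t,x)\mapsto u(T-t,x)$, and the case $k_0=m$ is disposed of separately since the solution is then stationary for large $r$. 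You instead propagate the full asymptotic profile from $T_0$ to $T$: the explicit observation that $S_L(T-T_0)\Xi_{k_0}=\Xi_{k_0}+r(T-T_0)$ with $r(T-T_0)\in\vect(\Xi_{k_0+1},\dots,\Xi_m)$ (the series terminates because the generators are polyharmonic, and each Laplacian raises the index) is correct, and combined with the free-wave monotonicity for $w_0$ and the nonlinear estimate, the coefficient matching via Claim \ref{Cl:coord} and $\ell(T),\ell(T_0)\neq 0$ forces $k_0(T)=k_0$, $\ell(T)=\ell_0$. What your route buys: no time-reversal step, the sign of $\ell$ is identified directly (the paper's norm comparison only sees $|c_{k_0}\ell|$), and the case $k_0=m$ is handled uniformly once you have observed, as you do, that non-compact support propagates and hence $\ell(T)\neq 0$ for all $T$. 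What the paper's route buys: it avoids computing $S_L(t)\Xi_k$ altogether and needs only the norm asymptotic, so it is shorter. Concerning the one step you defer, the bound $\|\NNN(T)\|_{\HHH(R)}\lesssim R^{-(k_0-\frac12)\frac{N+2}{N-2}}$ requires no new pointwise-decay analysis: it is exactly the small-data exterior estimate \eqref{MZ_small_data} (equivalently the Strichartz argument of Step \ref{St:asy_prelim}), applied with initial time $T_0$ and radius $R-|T-T_0|$, together with $\|\vec u(T_0)\|_{\HHH(R')}\lesssim (R')^{-(k_0-\frac12)}$; so your proposal closes with ingredients already in Section \ref{S:preliminaries}.
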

\begin{remark}
 The assumption on the essential support of $(u_0,u_1)$ is satisfied if and only if $k_0<m$ or $k_0=m$ and $(u_0,u_1)(r)=(\pm W_{(\lambda)}(r),0)$ for large $r$.
\end{remark}

Finally, we note that with the stronger assumption that $u$ is a nonradiative solution, we can improve the conclusion of Proposition \ref{P:asymptotic_NR} and obtain the partial uniqueness result:
\begin{corol}
 \label{C:uniqueness_NR}
Let $u$ be a nonradiative solution of \eqref{NLW} at $t=0$, that is a solution defined on $\{|x|\geq |t|\}$ such that
$$ \sum_{\pm \infty}\lim_{t\to\pm\infty}\int_{\{|x|>|t|\}} |\nabla_{t,x}u(t,x)|^2\,dx=0.$$
Let $k_0$ be as in Proposition \ref{P:asymptotic_NR} and assume that $k_0=m$. Then $(u_0,u_1)=(0,0)$ or there exists $\lambda>0$ and a sign $\pm$ such that $(u_0,u_1)=(\pm W_{(\lambda)},0)$. 
\end{corol}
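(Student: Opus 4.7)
The plan is to combine Proposition~\ref{P:asymptotic_NR} with a rigidity statement analogous to Proposition~\ref{P:nonradi_compact} for compactly supported nonradiative data. Since $u$ is in particular weakly non-radiative, Proposition~\ref{P:asymptotic_NR} together with the hypothesis $k_0=m$ yields that, for some $R_1>0$ and almost every $r\geq R_1$,
\begin{equation*}
(u_0,u_1)(r)=
\begin{cases}
(0,0)&\text{if }\ell=0,\\
\mathrm{sign}(\ell)\bigl(W_{(\lambda)}(r),0\bigr)&\text{if }\ell\neq 0,
\end{cases}
\end{equation*}
where $\lambda$ is determined by $\lambda^{\frac{N}{2}-1}(N(N-2))^{\frac{N}{2}-1}=|\ell|$. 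If $\ell=0$, then $(u_0,u_1)$ is compactly supported and Proposition~\ref{P:nonradi_compact} directly gives $u=0$.

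In the remaining case $\ell\neq 0$, set $V_1:=\mathrm{sign}(\ell)\,W_{(\lambda)}$, viewed as a stationary solution of \eqref{NLW} on $\RR\times\RR^N$. Consider the difference $w:=u-V_1$, defined on $\{|x|\geq|t|\}$. Its initial data $(w_0,w_1)=(u_0-V_1,u_1)$ vanishes for $r\geq R_1$, so it is compactly supported and of finite energy, and $w$ satisfies the perturbed equation
\begin{equation*}
\partial_t^2 w-\Delta w-\tfrac{N+2}{N-2}\,|V_1|^{\frac{4}{N-2}}\,w=\NNN(w;V_1),
\end{equation*}
where $\NNN(w;V_1):=F(w+V_1)-F(V_1)-\tfrac{N+2}{N-2}|V_1|^{\frac{4}{N-2}}\,w$ is at least quadratic in $w$. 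Since $V_1\in\hdot$, its energy in $\{|x|>|t|\}$ tends to $0$ as $|t|\to\infty$; combined with the nonradiativity of $u$ and the pointwise bound $|\nabla_{t,x}w|^2\leq 2|\nabla_{t,x}u|^2+2|\nabla V_1|^2$, this shows that $w$ is itself nonradiative.

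We are thereby reduced to the following claim: a radial solution $w$ on $\{|x|\geq|t|\}$ of the above perturbed equation, with compactly supported initial data and nonradiative, must vanish. This is the natural analogue of Proposition~\ref{P:nonradi_compact} in the presence of the stationary background $V_1$: the linear part is the radial wave equation with the radial potential $V=\tfrac{N+2}{N-2}|V_1|^{\frac{4}{N-2}}$, which decays like $|x|^{-4}$ at infinity. The argument should follow the scheme of Proposition~\ref{P:nonradi_compact}: finite speed of propagation localizes $w$ inside the cone $\{|x|\leq R_1+|t|\}$, the nonradiativity forces $w$ to be small in the relevant Strichartz norms so that $\NNN(w;V_1)$ can be absorbed perturbatively, and the rigidity for the radial linear wave equation with potential obtained in \cite{DuKeMe19Pa}---which underlies Proposition~\ref{P:nonradi_compact}---then forces $w\equiv 0$, whence $u=V_1=\pm W_{(\lambda)}$.

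The main obstacle is the adaptation of the linear rigidity from \cite{DuKeMe19Pa} to the potential $\tfrac{N+2}{N-2}|W_{(\lambda)}|^{\frac{4}{N-2}}$, whose linearized operator has a nontrivial kernel containing the scaling direction of $W_{(\lambda)}$. One must leverage the compact support of $(w_0,w_1)$, together with the fact that the choice $k_0=m$ has already eliminated all slowly decaying contributions at infinity, to rule out any resonant mode and close the perturbation argument.
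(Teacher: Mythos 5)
Your case $\ell=0$ is fine: Proposition \ref{P:asymptotic_NR} makes $(u_0,u_1)$ compactly supported and Proposition \ref{P:nonradi_compact} applies (no circularity, since its proof only uses Proposition \ref{P:support}). The gap is in the case $\ell\neq 0$: everything there rests on an unproved ``claim'' --- rigidity of nonradiative, compactly supported perturbations of the background $W_{(\lambda)}$ --- and the way you set it up makes it genuinely problematic. Having written the equation for $w=u-\mathrm{sign}(\ell)W_{(\lambda)}$ in linearized form, you must (i) absorb the superlinear remainder $\NNN(w;V_1)$ perturbatively, which would require a smallness of $w$ in Strichartz norms on $\{|x|>|t|\}$ that nothing in the hypotheses provides (nonradiativity gives no such smallness), and (ii) deal with the kernel of the linearized operator (the scaling mode), which you yourself flag as the main obstacle without resolving it. As written, the decisive step of the argument is missing.

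The paper closes this case without any linearization or spectral input. Setting $h=u-W$ (after normalizing the sign and $\lambda$), one writes the \emph{exact} equation as a linear wave equation with a potential,
\begin{equation*}
\partial_t^2h-\Delta h=F(W+h)-F(W)=Vh,\qquad V:=\frac{F(W+h)-F(W)}{h},
\end{equation*}
and notes the pointwise bound $|V|\leq C\bigl(W^{\frac{4}{N-2}}+|h|^{\frac{4}{N-2}}\bigr)\leq C\,r^{-2}$ on $\{|x|>|t|\}$, the last inequality coming from the radial Sobolev inequality together with $\sup_t\|\vec u(t)\|_{\HHH(|t|)}<\infty$. Since $(h_0,h_1)$ is compactly supported by Proposition \ref{P:asymptotic_NR} and $h$ is nonradiative (as you observed, because $\nabla W_{(\lambda)}\in L^2$), Proposition \ref{P:support} applies directly: if $(h_0,h_1)\neq(0,0)$ it yields a time-uniform lower bound on the exterior energy of $h$, contradicting nonradiativity. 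No smallness, no absorption of a quadratic term, and no information about resonances of the linearized operator are needed, because Proposition \ref{P:support} only requires $|V|\leq C/r^2$ and compactly supported data. Your proof becomes complete once you replace the linearization by this difference-quotient potential device --- the same trick already used for Proposition \ref{P:nonradi_compact} with $V=|u|^{\frac{4}{N-2}}$.
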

Let us postpone the proof of the propositions and prove Corollary \ref{C:uniqueness_NR}. We will need the following proposition, valid for all $N\geq 3$ (see \cite[Subsection 3.6]{DuKeMe19Pa} for the proof):
\begin{prop}
\label{P:support}
Let $V(t,r)$ be a continuous, real-valued potential defined on $\{|x|>R+|t|\}$ for some $R$ and that satisfies 
\begin{equation*}
 r>R+|t|\Longrightarrow |V(t,r)|\leq \frac{C}{r^2}
\end{equation*} 
for some constant $C$. 
Let $h$ be a radial solution of 
$$ \partial^2_th-\Delta h+V h=0,\quad |x|>R+|t|.$$
Let $\rho_0>R$ and assume that the support of $(h_0,h_1)=\vec{h}(0)$ is included in $\{r\leq \rho_0\}$. Then there exist $\eps>0$ such that for all $\rho\in \big(\rho_0-\eps,\rho_0\big)$, the following holds for all $t\geq 0$ or for all $t\leq 0$:
\begin{equation*}
 \int_{\rho+|t|}^{+\infty} (\partial_{t,r}h(t,r))^2r^{N-1}\,dr\geq \frac{1}{8}\int_{\rho}^{+\infty} (\partial_{t,r}h(0,r))^2r^{N-1}\,dr.
\end{equation*} 
\end{prop}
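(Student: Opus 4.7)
The plan is to reduce to a one-dimensional wave equation with a potential decaying like $1/r^2$, then track the outgoing Riemann invariant along characteristics. All the analysis is carried out on $\{r > R+|t|\}$ for $\rho > R$ close to $\rho_0$.

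\emph{Step 1 (1D reduction).} Set $w(t,r) = r^{(N-1)/2} h(t,r)$. On $\{r > R+|t|\}$, a direct computation gives
\begin{equation*}
 \partial_t^2 w - \partial_r^2 w + V_0(t,r)\, w = 0,
\end{equation*}
where $V_0 := V + \tfrac{(N-1)(N-3)}{4r^2}$ still obeys $|V_0| \leq C'/r^2$. Integration by parts in $r$ together with the radial Hardy inequality yields, uniformly in $\rho > R$, an equivalence
$$\int_\rho^\infty \bigl[(\partial_t w)^2 + (\partial_r w)^2\bigr]\,dr \;\approx\; \int_\rho^\infty |\partial_{t,r} h|^2\,r^{N-1}\,dr,$$
with constants depending only on $N$ and $R$. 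This allows us to work with the 1D energy.

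\emph{Step 2 (Reduction to a thin shell).} Thanks to finite speed of propagation (valid because $V$ is a zeroth-order term, so the light-cone structure is untouched), split $\vec{h}(0)$ into an inner piece supported in $\{r\leq\rho\}$ and an outer piece supported in the shell $[\rho,\rho_0]$. The inner piece produces a solution supported in $\{r\leq \rho+|t|\}$ and contributes nothing to the exterior integral. One is reduced to the case where $(w_0,w_1)$ is supported in the thin shell $[\rho,\rho_0]$, with 1D energy $\eta^2$ comparable to the right-hand side of the desired inequality; note $\eta^2 \lesssim \eps$ times a modulus of continuity of the initial data at $\rho_0$.

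\emph{Step 3 (Free wave lower bound).} Define the Riemann invariants $f = \partial_t w + \partial_r w$ and $g = \partial_t w - \partial_r w$. Then
$$(\partial_t - \partial_r)f = -V_0\,w, \qquad (\partial_t + \partial_r)g = -V_0\,w.$$
For the free wave ($V_0=0$) and thin-shell data supported in $[\rho,\rho_0]$, the method of characteristics gives
$$\int_{\rho+t}^\infty g(t,r)^2\,dr = \int_\rho^{\rho_0} g_0(s)^2\,ds \quad (t\geq 0),$$
and the symmetric identity for $f$ at $t\leq 0$. Since $\int f_0^2+\int g_0^2 = 2\eta^2$, one of these two integrals is $\geq \eta^2$, which determines the sign of $t$ in the conclusion.

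\emph{Step 4 (Perturbation control).} Assume $\int g_0^2 \geq \eta^2$, so we target $t\geq 0$. The identity $(\partial_t+\partial_r)(g^2/2) = -V_0\,w\,g$, integrated over $\{r>\rho+s,\,0\leq s\leq t\}$, gives the boundary-free Volterra identity
$$\int_{\rho+t}^\infty \tfrac{1}{2}g(t,r)^2\,dr = \int_\rho^\infty \tfrac{1}{2}g_0(r)^2\,dr - \int_0^t \int_{\rho+s}^\infty V_0\,w\,g\,dr\,ds.$$
The error term is controlled by $|V_0|\leq C'/r^2$, Cauchy--Schwarz, and the radial Hardy inequality $\int_{\rho+s}^\infty w^2/r^2\,dr \lesssim \int_{\rho+s}^\infty (\partial_r w)^2\,dr$, which is $\lesssim \eta^2$ up to the drift of the 1D energy itself (also estimated by a similar computation). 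For $\eps$ small enough, the error is bounded above by $\tfrac{1}{4}\eta^2$, so the exterior integral of $g^2/2$ stays above $\tfrac{1}{4}\eta^2$.

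\emph{Conclusion and main obstacle.} Translating back through Step 1 yields the lower bound with constant $\tfrac{1}{8}$. The delicate point is Step 4: a crude estimate $|V_0 w g|\lesssim (\rho+s)^{-1}\|w/r\|_{L^2}\|g\|_{L^2}$ produces only a logarithmic factor $\log(1+t/\rho)$ and does not close. The right estimate uses the improved bound $|V_0|\leq C'(\rho+s)^{-2}$ along outgoing characteristics, combined with a bootstrap on $\|w(s,\cdot)\|_{L^2(r>\rho+s)}$ that relies on the concentration of $(w_0,w_1)$ in a shell of width $\eps$ and on the time-integrability of $(\rho+s)^{-2}$ along outgoing rays. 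This is the scattering-type argument playing the role of a short-range perturbation theory for the 1D wave equation with potential; it is here that the hypothesis $\rho > R$ enters, giving a strictly positive lower bound $\rho-R$ for the safety distance that controls the constants.
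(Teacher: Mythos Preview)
The paper does not prove this proposition; it refers to \cite{DuKeMe19Pa}, Subsection~3.6. Your strategy---1D reduction, Riemann invariants for the free 1D wave, and perturbative control of the effective potential $V_0$---is the standard one and is essentially what appears in that reference. Two points, however, are stated imprecisely and the second is precisely the gap you flag as the ``main obstacle'' without actually closing it.

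\textbf{Step 1.} The claimed equivalence of the 1D and $N$-dimensional exterior energies with constants depending only on $N$ and $R$ is false in general: the identity
\[
\int_{\rho}^{\infty}(\partial_r w)^2\,dr=\int_{\rho}^{\infty} r^{N-1}(\partial_r h)^2\,dr-\tfrac{N-1}{2}\rho^{N-2}h(\rho)^2-\tfrac{(N-1)(N-3)}{4}\int_{\rho}^{\infty} r^{N-3}h^2\,dr
\]
has a boundary term that is not absorbed by Hardy for $N\ge 3$. What saves you is again the thin-shell structure (see below): since $h(t,\rho_0+|t|)=0$, one gets $(\rho+|t|)^{N-2}h(t,\rho+|t|)^2\lesssim(\eps/\rho)\int r^{N-1}(\partial_r h)^2$, and similarly for the bulk term. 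So the equivalence holds only up to an $O(\eps/\rho)$ relative error; state this.

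\textbf{Step 4.} Your Hardy-based bound indeed produces the logarithm, and the fix you describe as a ``bootstrap on $\|w(s,\cdot)\|_{L^2}$'' is the right object but the wrong mechanism: there is no bootstrap. Finite speed of propagation forces $w(s,\cdot)$, restricted to $\{r>\rho+s\}$, to be supported in the interval $[\rho+s,\rho_0+s]$ of fixed length $\eps$ \emph{for every $s$}, with $w(s,\rho_0+s)=0$. The one-dimensional Poincar\'e inequality on that interval then gives directly
\[
\|w(s,\cdot)\|_{L^2(r>\rho+s)}\le \eps\,\|\partial_r w(s,\cdot)\|_{L^2(r>\rho+s)},
\]
so that
\[
\|V_0\,w(s,\cdot)\|_{L^2(r>\rho+s)}\le \frac{C'\eps}{(\rho+s)^2}\,\|\partial_r w(s,\cdot)\|_{L^2(r>\rho+s)}.
\]
Now $\int_0^{\infty}(\rho+s)^{-2}\,ds=\rho^{-1}$ is finite, and a standard Duhamel/Gronwall argument gives that the perturbation of the exterior 1D energy is at most $(C'\eps/\rho)$ times that energy, which is $<\tfrac14$ for $\eps$ small depending on $C'$ and $\rho_0$. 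This is what closes the estimate; the smallness of $\eta$ mentioned at the end of Step~2 plays no role.
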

\begin{proof}[Proof of the corollary]
We argue by contradiction. 
By the assumption on $u$
\begin{equation}
 \label{u_H_bounded}
 \sup_{t\in \RR} \|u\|_{\HHH(|t|)}<\infty,
\end{equation} 
By the radial Sobolev embedding theorem,
\begin{equation}
\label{toprove_u}
\forall (t,r),\quad r>|t|\Longrightarrow 
|u(t,r)|^{\frac{4}{N-2}}\leq \frac{C}{r^2}. 
\end{equation} 
First assume that $k_0=m$ and $\ell=0$. Then by Proposition \ref{P:asymptotic_NR}, $\vec{u}(t)$ is compactly supported for all $t$. Since $u$ satisfies 
$$\partial_t^2u-\Delta u=|u|^{\frac{4}{N-2}}u,$$
we directly obtain a contradiction from Proposition \ref{P:support} with $V=|u|^{\frac{4}{N-2}}$  and the fact that $u$ is nonradiative.

The proof is very similar if $\ell\neq 0$. In this case, we know from Proposition \ref{P:asymptotic_NR} that there exists $\lambda>0$ and $\iota \in \{\pm 1\}$ such that 
$$(u_0,u_1)-(\iota W_{(\lambda)},0)$$
is compactly supported. Assume to fix the ideas that $\iota=1$ and $\lambda=1$, and let $h=u-W$. 
Then $h$ satisfies
$$\partial_t^2h-\Delta h=F(W+h)-F(W)=Vh,$$
where 
$$V=\frac{F(W+h)-F(W)}{h}$$
is such that
$$\left|V\right|\leq W^{\frac{4}{N-2}}+|h|^{\frac{4}{N-2}}.$$
Using as before the radial Sobolev embedding and \eqref{toprove_u}, we see that there exists a constant $C>0$ such that
$$r\geq |t|\Longrightarrow\left|V(t,r)\right| \leq \frac{C}{r^2}.$$
Proposition \ref{P:support} and the fact that $u$ (and thus $h$) is nonradiative yield a contradiction.
\end{proof}

Before proving the propositions, we will state a simple result on sequences with geometric growth:
\begin{claim}
 \label{Cl:sequences}
 Let $q,r\in (0,1)$, $c_0\geq 0$, $\beta >1$. Then there exist a small constant $\eps$ and a large constant $C$ (both depending only on $q$, $r$, $c_0$ and $\beta$), with the following property.
 Let $(\mu_n)_n$ be a sequence and $\nu_0\in [0,\eps]$ such that 
 \begin{gather}
  \label{AZ10}
  \forall n,\quad 0\leq \mu_n\leq \eps\\
  \label{AZ11}
  \forall n,\quad \mu_{n+1}\leq q\mu_n+c_0\mu_n^{\beta}+\nu_0r^n.
 \end{gather}
Then if $q\neq r$,
\begin{equation}
\label{AZ12}
\forall n\geq 0, \quad \mu_n\leq C (\mu_0+\nu_0)\max\{ q^n,r^n\},
\end{equation} 
and if $q=r$,
\begin{equation}
 \label{AZ13}
 \mu_n\leq C\left( \mu_0+\nu_0(1+n) \right)r^n.
\end{equation} 
\end{claim}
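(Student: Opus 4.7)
The strategy is to iterate \eqref{AZ11} into the variation-of-parameters form
\begin{equation*}
\mu_n \leq q^n\mu_0 + \sum_{k=0}^{n-1} q^{n-1-k}\bigl(c_0\mu_k^\beta + \nu_0 r^k\bigr)
\end{equation*}
and close a bootstrap induction on $n$. The natural ansatz is $\mu_k\leq C(\mu_0+\nu_0)\max\{q^k,r^k\}$ in the case $q\neq r$, and $\mu_k\leq C(\mu_0+\nu_0(1+k))r^k$ in the resonant case $q=r$; the appearance of the weight $(1+n)$ in the latter is already forced by the purely linear piece $\nu_0\sum_{k=0}^{n-1}q^{n-1-k}r^k=\nu_0\,nq^{n-1}$.

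In the non-resonant case $q\neq r$, the linear source integrates to $\nu_0\max\{q^n,r^n\}/|q-r|$, which is absorbed as soon as $C$ exceeds a multiple of $1+1/|q-r|$. For the nonlinear term I would combine the induction hypothesis with the pointwise bound $\mu_k\leq\eps$: when $q>r$, the estimate $\mu_k^\beta\leq [C(\mu_0+\nu_0)]^\beta q^{k\beta}$ turns the inner sum into a geometric series with ratio $q^{\beta-1}<1$; when $q<r$, the estimate $\mu_k^\beta\leq\mu_k\,\eps^{\beta-1}$ linearizes the sum and reproduces the shape $r^n/(r-q)$. In both subcases the nonlinear contribution is bounded by $C(\mu_0+\nu_0)\max\{q^n,r^n\}$ multiplied by a small factor of the form $c_0\eps^{\beta-1}\cdot(\text{power of }C)$ divided by a constant of size $q(1-q^{\beta-1})$ or $r-q$, so that fixing $C$ large in terms of $q,r$ and then $\eps$ small in terms of $C,c_0,\beta$ closes the induction.

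In the resonant case $q=r$ the nonlinear sum becomes
\begin{equation*}
c_0\sum_{k=0}^{n-1}r^{n-1-k}\mu_k^\beta\leq c_0 C^\beta(\mu_0+\nu_0)^\beta r^{n-1}\sum_{k=0}^{n-1}(1+k)^\beta r^{k(\beta-1)},
\end{equation*}
and the decisive observation is that $\sum_{k\geq 0}(1+k)^\beta r^{k(\beta-1)}$ converges because $r^{\beta-1}<1$. Hence the nonlinear contribution is of order $r^n$, comfortably dominated by the ansatz $C(\mu_0+\nu_0(1+n))r^n$ once $\eps$ is chosen small. The main obstacle is precisely this resonant case: a purely geometric ansatz cannot close, and the polynomial ansatz would feed back catastrophically into the nonlinear term were it not for the gain $r^{\beta-1}<1$ per iteration, which is why the hypothesis $\beta>1$ is essential to the whole argument and why $\eps$ must be allowed to depend on $c_0$ and $\beta$.
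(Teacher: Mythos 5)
Your argument is correct, and it closes: the discrete Duhamel formula plus strong induction on $n$ with the sharp ansatz works, the constants can be fixed in the order you indicate ($C$ from $q,r$ first, then $\eps$ small in terms of $C,c_0,\beta$, using $\mu_0+\nu_0\leq 2\eps$ to make the nonlinear contribution a small multiple of $C(\mu_0+\nu_0)\max\{q^n,r^n\}$ or of $C(\mu_0+\nu_0(1+n))r^n$), and you correctly switch between the two ways of exploiting $\beta>1$ (raising the ansatz to the power $\beta$ when $q>r$, linearizing via $\mu_k^\beta\leq\eps^{\beta-1}\mu_k$ when $q<r$, and using summability of $(1+k)^\beta r^{k(\beta-1)}$ when $q=r$). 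The paper proceeds differently: it first solves the linear case $c_0=0$ exactly, then in the general case linearizes the nonlinearity into the rate, $\mu_{n+1}\leq(q+c_0\eps^{\beta-1})\mu_n+\nu_0 r^n$, which settles $q<r$ immediately; for $q\geq r$ it extracts the crude bound $\mu_n\leq C(q+c_0\eps^{\beta-1})^n(\mu_0+\nu_0)$, feeds it back so that the nonlinear term becomes a source with ratio $q'=(q+c_0\eps^{\beta-1})^{\beta}<q$, and applies the linear case a second time; the resonant $(1+n)$ factor then falls out of the linear computation with $\max(q',r)=r=q$. The paper's two-pass bootstrap-in-the-rate is modular (one linear lemma applied three times) and avoids carrying the polynomial weight through the nonlinear estimate, whereas your single induction-in-$n$ is more direct and makes visible exactly where each hypothesis ($\beta>1$, $q,r<1$, smallness of $\eps$) enters; both hinge on the same perturbative use of $\mu_n\leq\eps$.
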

The claim is proved in appendix \ref{A:sequences}
\begin{proof}[Proof of Proposition \ref{P:asymptotic_NR}]
 \setcounter{step}{0}
 \begin{step}
\label{St:asy_prelim}
 In all the proof, we fix a small $\eps>0$ (smallness is independent of $u$) and a $R_{\eps}>0$ such that $u(t,r)$ is defined for $r>R_{\eps}+|t|$ and \eqref{defReps} is satisfied. Note in particular that it implies, by the small data well-posedness theory (Proposition \ref{P:LWP_cone}) that for all $R\geq R_{\eps}$, 
 \begin{equation}
 \label{MZ_small_data}
  \|u-u_L\|_{S(\{|x|>R+|t|\})}+\|u-u_L\|_{W(\{|x|>R+|t|\})}\lesssim \|(u_0,u_1)\|^{\frac{N+2}{N-2}}_{\HHH(R)},
 \end{equation}
 where $u_L(t)=S_L(t)(u_0,u_1)$.
 For $R>R_{\eps}$, define:
 $$v_R(t)=S_L(t)\Pi_{P(R)}(u_0,u_1),\quad w_R(t)=u(t)-v_R(t),$$
so that 
\begin{equation*}
\partial_t^2w_R-\Delta w_R=|u|^{\frac{4}{N-2}}u,\quad \pi_{P(R)}(w_{R0},w_{R1})=(0,0),
\end{equation*} 
where $(w_{R0},w_{R1})=\vec{w}_R(0)$. As a consequence, for all $T$, denoting by $w_{RF}$ the solution of the free wave equation with initial data $(w_{R0},w_{R1})$ at $t=0$, 
$$\left\|\vec{w}_R(T)-\vec{w}_{RF}(T)\right\|_{\HHH(R+|T|)}\lesssim \left\||u|^{\frac{4}{N-2}}u\right\|_{W'(\{|x|>R+|T|\})}\lesssim \|(u_0,u_1)\|^{\frac{N+2}{N-2}}_{\HHH(R)},$$
where we have used \eqref{MZ_small_data} and the fractional chain rule \eqref{fractional}.
This implies 
$$\sum_{\pm}\lim_{t\to\pm\infty} \int_{|x|\geq R+|t|} |\nabla_{t,x}w_{RF}(t,x)|^2\,dx\lesssim \|(u_0,u_1)\|_{\HHH(R)}^{\frac{2(N+2)}{N-2}},$$
and thus by \cite{KeLaLiSc15} (see \eqref{linear_exterior}),
$$ \left\|\pi_{P(R)^{\bot}}(u_0,u_1)\right\|_{\HHH(R)}=\|(w_{R0},w_{R1})\|_{\HHH(R)}\lesssim \left\|(u_0,u_1)\right\|_{\HHH(R)}^{\frac{N+2}{N-2}}.$$
 Using that $\|(u_0,u_1)\|_{\HHH(R)}$ is small, we deduce
 \begin{equation}
  \label{MZ20}
  \left\|\pi_{P(R)^{\bot}}(u_0,u_1)\right\|_{\HHH(R)}\lesssim \left\|\pi_{P(R)}(u_0,u_1)\right\|_{\HHH(R)}^{\frac{N+2}{N-2}}.
 \end{equation} 
 We denote by $\theta_k(R)$ the coordinates of $\pi_{P(R)}(u_0,u_1)$ in the basis $(\Xi_1,\ldots,\Xi_m)$, i.e.
 $$(u_0,u_1)=(w_{R0},w_{R1})+\sum_{k=1}^m \theta_k(R)\Xi_k.$$
 By \eqref{MZ20} and Claim \ref{Cl:coord},
 \begin{equation}
  \label{MZ30}
  \left\|(w_{R0},w_{R1})\right\|_{\HHH(R)}\lesssim \left( \sum_{k=1}^{m}\frac{|\theta_k(R)|}{R^{k-\frac 12}} \right)^{\frac{N+2}{N-2}}.
 \end{equation} 
 We next consider $R'$ such that $R_{\eps}\leq R\leq R'\leq 2R$. For $r\geq R'$, writing $(u_0,u_1)(r)$ in two different ways, we obtain
 $$ (w_{R0},w_{R1})(r)+\sum_{k=1}^m \theta_k(R)\Xi_k(r)=(w_{R'0},w_{R'1})(r)+\sum_{k=1}^m \theta_k(R')\Xi_k(r),$$
 and thus
 $$\sum_{k=1}^m (\theta_k(R)-\theta_k(R'))\Xi_k(r)=-(w_{R0},w_{R1})+(w_{R'0},w_{R'1}).$$
 Using \eqref{MZ30} and Claim \ref{Cl:coord}, we deduce
 $$\sum_{k=1}^m |\theta_{k}(R)-\theta_{k}(R')|\frac{1}{(R')^{k-\frac 12}}\lesssim \left( \sum_{k=1}^m \frac{|\theta_k(R)|}{R^{k-\frac 12}}+ \sum_{k=1}^m \frac{|\theta_k(R')|}{{R'}^{k-\frac 12}}\right)^{\frac{N+2}{N-2}}.$$
 Since $R\leq R'\leq 2R$, we can replace $\frac{1}{(R')^{k-\frac 12}}$ by $\frac{1}{R^{k-\frac 12}}$ in the inequality above. Using the smallness of $\sum_{k=1}^{m}\frac{|\theta_k(R)|}{R^{k-\frac 12}}$, we obtain
 \begin{equation*}
  \sum_{k=1}^{m}\frac{|\theta_k(R')|}{R^{k-\frac 12}}\lesssim\sum_{k=1}^{m}\frac{|\theta_k(R)|}{R^{k-\frac 12}},
 \end{equation*} 
 and finally, for all $R,R'$ such that $R_{\eps}\leq R\leq R'\leq 2R$,
\begin{equation}
\label{MZ31}
\sum_{k=1}^m |\theta_{k}(R)-\theta_{k}(R')|\frac{1}{R^{k-\frac 12}}\lesssim \left( \sum_{k=1}^m \frac{|\theta_k(R)|}{R^{k-\frac 12}}\right)^{\frac{N+2}{N-2}}.
\end{equation} 
In the two next steps, we will prove, using repeatedly \eqref{MZ31}, that there exist $k_0\in \llbracket 1,m\rrbracket$ and $\ell_{k_0}\in \RR$ such that for all $R\geq R_{\eps}$
\begin{align}
 \label{MZ40}
 \frac{\left|\theta_{k_0}(R)-\ell_{k_0}\right|}{R_{\eps}^{k_0-\frac 12}}&\lesssim \eps^{\frac{N+2}{N-2}} \left( \frac{R_{\eps}}{R} \right)^{\frac{4(k_0-\frac 12)}{N-2}}\\
\label{MZ41}
 \forall k\in \llbracket 1, k_0-1\rrbracket,\quad \frac{\left|\theta_{k}(R)\right|}{R^{k-\frac 12}}&\lesssim \eps\left( \frac{R_{\eps}}{R} \right)^{\left(\frac{N+2}{N-2}\right)(k_0-\frac 12)}\\
\label{MZ42}
 \forall k\in \llbracket k_0+1,m\rrbracket,\quad \frac{\left|\theta_{k}(R)\right|}{R^{k-\frac 12}}&\lesssim \eps\left( \frac{R_{\eps}}{R} \right)^{\min\left\{k_0+\frac 12,\left(\frac{N+2}{N-2}\right)(k_0-\frac 12)\right\}},
 \end{align}
with $\ell_{k_0}\neq 0$ or $k_0=m$. 

We first check that \eqref{MZ40}, \eqref{MZ41} and \eqref{MZ42} imply that the desired estimate \eqref{asymptotic_NRbis} holds. From \eqref{MZ40} at $R=R_{\eps}$, we have $|\ell_{k_0}|\lesssim \eps R_{\eps}^{k_0-\frac 12}$ and thus
$$\frac{1}{R^{k_0-\frac 12}} |\theta_{k_0}(R)|\lesssim |\ell_{k_0}| \frac{1}{R^{k_0-\frac 12}} +\eps^{\frac{N+2}{N-2}} \left( \frac{R_{\eps}}{R} \right)^{\frac{(k_0-\frac 12)N+2}{N-2}}\lesssim\eps\left(\frac{R_{\eps}}{R}\right)^{k_0-\frac 12},$$
for $R\geq R_{\eps}$. Combining with \eqref{MZ41} and \eqref{MZ42}, and the estimate \eqref{MZ30} we obtain
$$ \left\|\left( w_{R0},w_{R1}) \right)\right\|_{\HHH(R)}\lesssim \eps\left( \frac{R_{\eps}}{R} \right)^{\frac{N+2}{N-2}(k_0-\frac 12)},$$
and thus 
\begin{multline*}
\|(u_0,u_1)-\ell_{k_0}\Xi_{k_0}\|_{\HHH(R)}\approx \left\|\pi_{P(R)}(u_0,u_1) -\ell_{k_0}\Xi_{k_0}\right\|_{\HHH(R)}+\|(w_{R0},w_{R1})\|_{\HHH(R)}\\
\lesssim \eps \left( \frac{R_{\eps}}{R} \right)^{\min\left\{k_0+\frac{1}{2},\frac{N+2}{N-2}\left(k_0-\frac 12\right)\right\} },
\end{multline*}
which yields \eqref{asymptotic_NRbis} with $\ell_{k_0}=\ell$. 

We will prove \eqref{MZ40}, \eqref{MZ41} and \eqref{MZ42} in Steps \ref{St:k01} and \ref{St:k0heredity} by induction on $k_0$, stopping the induction process when we have found $k_0$ such that \eqref{MZ40}, \eqref{MZ41} and \eqref{MZ42} hold with $k_0=m$ or $\ell_{k_0}\neq 0$. 

In Steps \ref{St:l0} and \ref{St:k0m} we will conclude the proof of Proposition \ref{P:asymptotic_NR}, proving that in the case where $k_0=m$, $(u_0,u_1)(r)$ is for large $r$ equal to $(0,0)$ or $(\pm W_{(\lambda)}(r),0)$ for some $\lambda>0$.
 \end{step}
 \begin{step}[The case $k_0=1$]
 \label{St:k01}
We prove here that \eqref{MZ40} and \eqref{MZ42} hold when $k_0=1$ for some $\ell_1\in \RR$.  (Note that \eqref{MZ41} is trivial in this case.)
For $1\leq k\leq m$, we define
\begin{equation}
 \label{def_Ak}
 A_k(R)=\sum_{j=k}^{m} \frac{|\theta_{j}(R)|}{R^{j-\frac 12}},
\end{equation} 
and note that $|A_k(R)|\lesssim \|(u_0,u_1)\|_{\HHH(R)}\lesssim \eps$ if $R\geq R_{\eps}$. We first prove
\begin{equation}
  \label{MZ52}
  R\geq R_{\eps}\Longrightarrow A_1(R)\lesssim \eps\left( \frac{R_{\eps}}{R} \right)^{\frac 12}.
 \end{equation} 
 By  
 \eqref{MZ31} with $R'=2R$, we have
 $$A_1(2R)=\sum_{k=1}^m \frac{|\theta_{k}(2R)|}{(2R)^{k-\frac 12}} \leq \frac{1}{2^{\frac 12}} \sum_{k=1}^m\frac{|\theta_k(2R)|}{R^{k-\frac 12}} \leq \frac{A_1(R)}{2^{\frac 12}}+CA_1(R)^{\frac{N+2}{N-2}}.$$
 Thus, for all $n\geq 0$, 
 \begin{equation}
  \label{MZ50}
  A_1\left( 2^{n+1}R_{\eps} \right)\leq \frac{A_1(2^nR_{\eps})}{2^{\frac 12}}+CA_1(2^nR_{\eps})^{\frac{N+2}{N-2}}.
 \end{equation} 
 Using Claim \ref{Cl:sequences}, we deduce
 $$ A_1(2^nR_{\eps})\lesssim \frac{A_1(R_{\eps})}{2^{\frac{n}{2}}}\lesssim \frac{\eps}{2^{\frac{n}{2}}}.$$
 This is \eqref{MZ52} when $R$ is of the form $2^nR_{\eps}$ for some $n\in \NN$.
 Since by \eqref{MZ31}, if $R_{\eps}\leq R\leq R'\leq 2R$,
 $$ A_1(R')\lesssim A_1(R),$$
 we deduce \eqref{MZ52} for all $R\geq R_{\eps}$.
 
 We next prove that there exists $\ell_{1}$ such that \eqref{MZ40} holds. Combining \eqref{MZ31} with the bound \eqref{MZ52} on $A_1(R)$, we have, for $R_{\eps}\leq R\leq R'\leq 2R$.
 \begin{equation}
 \label{MZdiff}
 \frac{1}{R^{\frac 12}}|\theta_1(R)-\theta_1(R')|\lesssim \left( A_1(R) \right)^{\frac{N+2}{N-2}}\lesssim \eps^{\frac{N+2}{N-2}}\left( \frac{R_{\eps}}{R} \right)^{\frac{N+2}{2(N-2)}}.
 \end{equation} 
 Hence for all $n\geq 0$,
 $$\left|\theta_{1}(2^nR_{\eps})-\theta_1(2^{n+1}R_{\eps})\right|\lesssim \eps^{\frac{N+2}{N-2}} R_{\eps}^{\frac 12} 2^{-\frac{2n}{N-2}}.$$
 As a consequence, $\sum_n \left|\theta_1(2^nR_{\eps})-\theta_1(2^{n+1}R_{\eps})\right|$ converges, and there exists $\ell_1\in \RR$ such that 
 $$|\theta_{1}(2^n(R_{\eps}))-\ell_1|\lesssim \eps^{\frac{N+2}{N-2}} R_{\eps}^{\frac 12} 2^{-\frac{2n}{N-2}}.$$
 This is \eqref{MZ40} in the case $k_0=1$, where $R$ is restricted to the values  $2^nR_{\eps}$ with $n\in \NN$. The inequality \eqref{MZ40} for general $R$ follows, using \eqref{MZdiff}. We note that \eqref{MZ40} with $R=R_{\eps}$ implies  $|\ell|\lesssim \eps R_{\eps}^{\frac 12}$ and thus
 \begin{equation}
  \label{MZboundtheta1}
  R\geq R_{\eps}\Longrightarrow|\theta_1(R)|\lesssim \eps R_{\eps}^{\frac 12}.
 \end{equation} 
 We next prove \eqref{MZ42}. We must bound $A_2(R)$, defined in \eqref{def_Ak}. By \eqref{MZ31},
 $$ \sum_{k=2}^m \frac{|\theta_k(2R)|}{R^{k-\frac 12}}\leq
 \sum_{k=2}^m \frac{|\theta_k(R)|}{R^{k-\frac 12}}+C\left( \sum_{k=2}^m \frac{|\theta_k(R)|}{R^{k-\frac 12}} +\frac{|\theta_1(R)|}{R^{\frac 12}} \right)^{\frac{N+2}{N-2}}.$$
 This yields, using \eqref{MZboundtheta1}.
 $$A_2(2R)\leq \frac{1}{2^{\frac 32}}A_2(R) +C\left( A_2(R) \right)^{\frac{N+2}{N-2}}+C\left( \frac{R_{\eps}}{R} \right)^{\frac{N+2}{2(N-2)}} \eps^{\frac{N+2}{N-2}}.$$
 Hence, for $n\geq 0$,
 $$A_2\left(2^{n+1}R_{\eps}\right)\leq \frac{1}{2^{\frac 32}}A_2\left(2^nR_{\eps}\right) +C\left( A_2(2^nR_{\eps}) \right)^{\frac{N+2}{N-2}}+C 2^{-\frac{n(N+2)}{2(N-2)}} \eps^{\frac{N+2}{N-2}}.$$
 Using Claim \ref{Cl:sequences}, we deduce
 $$ A_2(2^{n}R_{\eps})\lesssim \max \left\{ \frac{1}{2^{\frac 32n}}, \frac{1}{2^{\frac{N+2}{2(N-2)}n}}\right\}\eps.$$
 In other terms, the following inequality holds when $R=2^nR_{\eps}$ for some integer $n\geq 0$:
 $$A_2(R)\lesssim \max\left\{ \left(\frac{R_{\eps}}{R}\right)^{\frac 32}, \left(\frac{R_{\eps}}{R}\right)^{\frac{N+2}{2(N-2)}}\right\}\eps.$$
 Arguing as before, we deduce that it holds for all $R\geq R_{\eps}$, and thus that \eqref{MZ42} holds with $k_0=1$.
 \end{step}
\begin{step}[Heredity]
\label{St:k0heredity}
 If the limit $\ell_1$ defined in Step \ref{St:k01} is not zero, then we are done. If $\ell_1=0$, we continue the same process. More precisely, we prove that if $2\leq  k_0\leq m$ and \eqref{MZ40}, \eqref{MZ41} and \eqref{MZ42} holds at rank $k_0-1$, and $\ell_{k_0-1}=0$  in \eqref{MZ40}, then \eqref{MZ40} (for some $\ell_{k_0}\in \RR$), \eqref{MZ41} and \eqref{MZ42} hold at rank $k_0$. We thus assume, that for all $R\geq R_{\eps}$, 
 \begin{align}
  \label{MZ80}  
  \frac{|\theta_{k}(R)|}{R^{k-\frac 12}}&\lesssim \left( \frac{R_{\eps}}{R} \right)^{\frac{N+2}{N-2}\left( k_0-\frac 32 \right)}\eps, \quad 1\leq k\leq k_0-1\\
  \label{MZ81}
  \frac{|\theta_k(R)|}{R^{k-\frac 12}}&\lesssim C\eps \max \left\{\left( \frac{R_{\eps}}{R} \right)^{k_0-\frac 12},\left( \frac{R_{\eps}}{R} \right)^{\frac{N+2}{N-2}\left( k_0-\frac{3}{2} \right)}\right\},\quad k_0\leq k\leq m.
 \end{align}
Note that \eqref{MZ80} when $k=k_0-1$ follows from \eqref{MZ40} at rank $k_0-1$ when $\ell_{k_0-1}=0$. We prove \eqref{MZ41} by induction, proving that an estimate of the form \eqref{MZ80} improves automatically. Assume
\begin{equation}
 \label{MZ82}
 \forall k\in \llbracket 1,k_0-1\rrbracket,\quad \frac{|\theta_k(R)|}{R^{k-\frac 12}}\lesssim \left( \frac{R_{\eps}}{R} \right)^{\alpha}\eps
\end{equation} 
for some $\alpha$ such that $\alpha >k_0-\frac 32$. Note that \eqref{MZ80} is \eqref{MZ82} with $\alpha=\frac{N+2}{N-2}\left(k_0-\frac{3}{2}\right)$. We consider $A_{k_0}(R)$ defined by \eqref{def_Ak}. By \eqref{MZ31} and \eqref{MZ82},
$$A_{k_0}\left( 2^{n+1}R_{\eps} \right)\leq \frac{1}{2^{k_0-\frac 12}} A_{k_0}\left( 2^nR_{\eps} \right)+C\Big(A_{k_0}\left( 2^nR_{\eps} \right)\Big)^{\frac{N+2}{N-2}} +C \eps^{\frac{N+2}{N-2}} \left(\frac{1}{2^n}\right)^{\alpha\frac{N+2}{N-2}}.$$
By Claim \ref{Cl:sequences}, we have
\begin{equation}
\label{MZ83}
A_{k_0}\left( 2^nR_{\eps} \right)\leq C\eps \max \left\{\left( \frac{1}{2^n} \right)^{k_0-\frac 12},\left( \frac{1}{2^n} \right)^{\alpha \left( \frac{N+2}{N-2} \right)} \right\}, 
\end{equation} 
if $k_0-\frac 12=\alpha \left( \frac{N+2}{N-2} \right)$ there is an additional $(n+1)$ factor. We will assume, taking a slightly smaller $\alpha$ if necessary, that we are not in this case.
This yields, for $R\geq R_{\eps}$,
\begin{equation}
 \label{MZ90}
 A_{k_0}(R)\lesssim C\eps \max \left\{\left( \frac{R_{\eps}}{R} \right)^{k_0-\frac 12},\left( \frac{R_{\eps}}{R} \right)^{\alpha \left( \frac{N+2}{N-2} \right)} \right\}.
\end{equation} 
 If $1\leq k\leq k_0-1$, we can combine the estimates \eqref{MZ31}, \eqref{MZ82} and \eqref{MZ83} to obtain
$$\frac{1}{(2^nR_{\eps})^{k-\frac{1}{2}}}\left|\theta_k(2^nR_{\eps})-\theta_k(2^{n+1}R_{\eps})\right|\lesssim \eps^{\frac{N+2}{N-2}}\left( \max\left\{\left( \frac{1}{2^n} \right)^{k_0-\frac 12},\left( \frac{1}{2^n} \right)^{\alpha}\right\}  \right)^{\frac{N+2}{N-2}}.$$
Since by our assumption on $\alpha$, $k-\frac{1}{2}\leq k_0-\frac{3}{2}<\alpha\frac{N+2}{N-2}$, we obtain
\begin{multline*}
\sum_{p\geq n}\left|\theta_k(2^pR_{\eps})-\theta_k(2^{p+1}R_{\eps})\right|\\
\lesssim \eps^{\frac{N+2}{N-2}}\sum_{p\geq n}(2^pR_{\eps})^{k-\frac{1}{2}}\left( \max\left\{\left( \frac{1}{2^p} \right)^{k_0-\frac 12},\left( \frac{1}{2^p} \right)^{\alpha}\right\}  \right)^{\frac{N+2}{N-2}}.
\end{multline*}
By \eqref{MZ82} and since $\alpha>k_0-\frac 32$, we have $\lim_{R\to\infty} \theta_k(R)=0$. We can thus deduce from the preceding inequality:
$$\frac{1}{(2^nR_{\eps})^{k-\frac{1}{2}}}\left|\theta_k(2^nR_{\eps})\right|\lesssim \eps^{\frac{N+2}{N-2}}\left( \max\left\{\left( \frac{1}{2^n} \right)^{k_0-\frac 12},\left( \frac{1}{2^n} \right)^{\alpha}\right\}  \right)^{\frac{N+2}{N-2}},$$
and, using the same argument as before, we can extend this inequality and obtain:
$$ \forall R\geq R_{\eps},\quad  \frac{|\theta_k(R)|}{R^{k-\frac 12}} \lesssim \eps \left( \frac{R_{\eps}}{R} \right)^{\alpha'},$$
where $\alpha'=\min \left( (k_0-\frac 12)\frac{N+2}{N-2},\alpha \frac{N+2}{N-2} \right)$. This is \eqref{MZ82} with $\alpha$ replaced with $\alpha'$. Iterating, we see that \eqref{MZ82} holds with $\alpha=( k_0-\frac 12)\frac{N+2}{N-2}$, which is exactly \eqref{MZ41}. Note that the above proof also yields that \eqref{MZ90} holds with this value of $\alpha$, that is\begin{equation}
\label{MZ100}
A_{k_0}(R)\lesssim \eps \left( \frac{R_{\eps}}{R} \right)^{k_0-\frac{1}{2}}.
\end{equation} 
We next prove that \eqref{MZ40} holds for some $\ell_{k_0}\in \RR$. By \eqref{MZ31}, \eqref{MZ41} and \eqref{MZ100},
$$ \left|\theta_{k_0}(2^nR_{\eps})-\theta_{k_0}(2^{n+1}R_{\eps})\right|\lesssim \frac{\eps^{\frac{N+2}{N-2}} R_{\eps}^{k_0-\frac 12}}{2^{\frac{4n(k_0-\frac 12)}{N-2}}}.$$
This proves that $\theta_{k_0}(2^nR_{\eps})$ has a limit $\ell_{k_0}$ as $n\to\infty$, and that 
$$ \left|\theta_{k_0}(2^nR_{\eps})-\ell_{k_0}\right|\lesssim \frac{\eps^{\frac{N+2}{N-2}} R_{\eps}^{k_0-\frac 12}}{2^{\frac{4n\left(k_0-\frac 12\right)}{N-2}}}.$$
As usual, we can deduce that $\theta_{k_0}(R)$ has a limit $\ell_{k_0}$ as $R\to\infty$, and that \eqref{MZ40} is satisfied. 

If $k_0=m$, the assertion \eqref{MZ42} is empty and we are done. If $k_0\leq m-1$, we must prove that \eqref{MZ42} holds. Using \eqref{MZ31}, \eqref{MZ40}, \eqref{MZ41} and \eqref{MZ100}, we see that 
\begin{equation}
 \label{MZ110}
 A_{k_0+1}\left( 2^{n+1}R_{\eps} \right)\leq \frac{A_{k_0+1}(2^nR_{\eps})}{2^{k_0+\frac{1}{2}}} +C\left( A_{k_0+1}(2^nR_{\eps})\right)^{\frac{N+2}{N-2}} +C\left( \frac{\eps}{2^{n(k_0-\frac{1}{2})}} \right)^{\frac{N+2}{N-2}}.
\end{equation} 
Using Claim \ref{Cl:sequences}, we see that \eqref{MZ110} implies 
$$|A_{k_0+1}(R)|\lesssim \max\left\{\left( \frac{R_{\eps}}{R} \right) ^{k_0+\frac 12},\left( \frac{R_{\eps}}{R} \right)^{\frac{N+2}{N-2}(k_0-\frac 12)}\right\}\eps,$$
for $R=2^nR_\eps$, $n\in \NN$, and then, with the same argument as above, for all $R\geq R_{\eps}$. This is exactly \eqref{MZ42}, which concludes this step.
\end{step}
\begin{step}
\label{St:l0}
 In this step we assume that $k_0=m$ and that $\ell_m=0$, and we prove that $(u_0,u_1)(r)=(0,0)$ for almost every large $r$. We first prove that for all $\alpha>0$, there exists $C_{\alpha}>0$ such that 
 \begin{equation}
  \label{MZ130} 
  \forall R\geq R_{\eps}, \quad A_1(R) \leq C_{\alpha} \left( \frac{R_{\eps}}{R} \right)^{\alpha}.
 \end{equation} 
 Indeed, by \eqref{MZ40} and \eqref{MZ41} with $k_0=m$ and $\ell_{k_0}=0$, \eqref{MZ130} holds with $\alpha=\frac{(m-\frac 12)(N+2)}{N-2}$. Moreover, if \eqref{MZ130} holds for some $\alpha\geq m-\frac 12$, then by \eqref{MZ31},
 $$\sum_{k=1}^m \frac{|\theta_k(R)-\theta_k(2R)|}{R^{k-\frac 12}} \lesssim \left( \frac{R_{\eps}}{R} \right)^{\frac{N+2}{N-2}\alpha}.$$
 Thus, for all $k\in \llbracket 1,m\rrbracket$, for all $R\geq R_{\eps}$, 
 $$ \left|\theta_{k}(2^nR)-\theta_{k}(2^{n+1}R)\right|\lesssim \frac{R^{k-\frac 12}} {2^{n\left( \frac{(N+2)\alpha}{N-2}-k+\frac 12\right)} }\left(\frac{R_{\eps}}{R} \right)^{\frac{N+2}{N-2}\alpha}.$$
Since $\alpha\geq m-\frac{1}{2}$, we have $\frac{N+2}{N-2}\alpha>k-\frac{1}{2}$. Summing up the inequality above over all $n\geq 1$, and using again that $\lim_{R\to\infty}\theta_k(R)=0$, we obtain
$$ \frac{1}{R^{k-\frac 12}} |\theta_k(R)|\lesssim \left( \frac{R_{\eps}}{R} \right)^{\frac{N+2}{N-2}\alpha}.$$
Thus \eqref{MZ130} holds with $\alpha$ replaced by $\frac{N+2}{N-2}\alpha$. As a conclusion, \eqref{MZ130} holds for all $\alpha>0$.

Combining \eqref{MZ130} with \eqref{MZ31}, we obtain that for all $R\geq R_{\eps}$,
$$\sum_{k=1}^m |\theta_k(R)-\theta_k(2R)|\leq C\,C_{\alpha}^{\frac{4}{N-2}} R^{m-1} \left( \frac{R_{\eps}}{R} \right)^{\frac{4\alpha}{N-2}}\sum_{k=1}^m |\theta_k(R)|.$$
We fix $\alpha=m(N-2)$, and let $\rho \geq R_{\eps}$, so large that 
$$ C\,C_{\alpha}^{\frac{4}{N-2}} \rho^{m-1} \left( \frac{R_{\eps}}{\rho} \right)^{4m}\leq \frac{1}{2}.$$
Thus for all $R\geq \rho$,
$$\sum_{k=1}^m |\theta_k(2R)|\geq \frac{1}{2}\sum_{k=1}^m |\theta_k(R)|,$$
and by an easy induction
$$ \sum_{k=1}^m |\theta_k(2^nR)|\geq \frac{1}{2^n} \sum_{k=1}^m |\theta_k(R)|.$$
Using \eqref{MZ130} with $\alpha=m+10$, and letting $n\to\infty$, we obtain 
$$\sum_{k=1}^m |\theta_k(R)|=0$$
for all $R\geq \rho$. By \eqref{MZ20}, we deduce that $\|(u_0,u_1)\|_{\HHH_{\rho}}=0$, which concludes this step.
\end{step}
\begin{step}
 \label{St:k0m}
In this step, we assume that $k_0=m$ and that $\ell_m\neq 0$. Rescaling $u$ and replacing $u$ by $-u$ if necessary, we can assume $\ell_m=\left(N(N-2)\right)^{\frac N2-1}$. Since 
$$\left|\partial_r\left( W-\frac{\left(N(N-2)\right)^{\frac N2-1}}{r^{N-2}} \right)\right|\lesssim \frac{1}{r^{N+1}},$$
we deduce that for large $r$,
$$\left\|(W,0)- \ell_m \Xi_m\right\|_{\HHH(R)}\lesssim \sqrt{\int_R^{+\infty}\frac{r^{N-1}}{r^{2(N+1)}} \,dr}\approx\frac{1}{R^{m+\frac 32}}.$$
Let $h(t)=u(t)-W$, and $(h_0,h_1)=\vec{h}(0)$. By \eqref{asymptotic_NR} and the assumptions $k_0=m$, $\ell_m=\left(N(N-2)\right)^{\frac N2-1}$, 
noting that $\left( m-\frac 12 \right)\frac{N+2}{N-2}<m+\frac 12$, 
we have
\begin{equation}
\label{hsmall}                                                                                                                                             
\left\|(h_0,h_1)\right\|_{\HHH(R)}\lesssim \frac{1}{R^{m+\frac{1}{2}}}.                                                                                                                                            \end{equation} 
Furthermore, $h$ satisfies the following equation for $|x|>R+|t|$:
$$\partial_t^2h -\Delta h=F(h)+F(W+h)-F(W)-F(h).$$
Let 
$$\Gamma_R:=\left\{(t,x)\in \RR\times \RR^N\;:\; |x|>R+|t|\right\}.$$
By the fractional chain rule \eqref{fractional},
$$\|F(h)\|_{W'(\Gamma_R)}\lesssim \|h\|_{W(\Gamma_R)}\|h\|^{\frac{4}{N-2}}_{S(\Gamma_R}.$$
Furthermore,
$$\left|F(W+h)-F(W)-F(h)\right|\lesssim 
\begin{cases}
W^{\frac{4}{N-2}}|h|, & N\geq 7\\
 W^{\frac{4}{3}}|h|+ W |h|^{\frac 43},& N=5.
\end{cases} 
$$
By explicit computation, one has
$$\left\| \indic_{\Gamma_R}W^{\frac{4}{N-2}} \right\|_{L^{\frac{2(N+1)}{N+4}}_tL^{\frac{2(N+1)}{3}}_x} \lesssim \frac{1}{R^2}.$$
Thus if $N\geq 7$,
\begin{multline*}
 \left\|(F(W+h)-F(W)-F(h))\indic_{\Gamma_R}\right\|_{L^1_tL^2_x}
\lesssim \left\| W^{\frac{4}{N-2}} h \indic_{\Gamma_R}\right\|_{L^1_tL^2_x}\\
\lesssim \left\| \indic_{\Gamma_R}W^{\frac{4}{N-2}} \right\|_{L^{\frac{2(N+1)}{N+4}}_tL^{\frac{2(N+1)}{3}}_x} \left\| h \indic_{\Gamma_R}\right\|_{L^{\frac{2(N+1)}{N-2}}_{t,x}}\lesssim \frac{1}{R^2}\|h\|_{S(\Gamma_R)},
\end{multline*}
and by a similar computation, if $N=5$,
\begin{multline*}
  \left\|(F(W+h)-F(W)-F(h))\indic_{\Gamma_R}\right\|_{L^1_tL^2_x}
\\
\lesssim \left\| \indic_{\Gamma_R}W^{\frac{4}{3}} \right\|_{L^{\frac 43}_tL^{4}_x} 
\left\| h \indic_{\Gamma_R}\right\|_{L^{4}_{t,x}}+\|\indic_{\Gamma_R}W\|_{L^{\frac{7}{3}}_tL^{\frac{14}{3}}_x}\|h\|^{\frac{4}{3}}_{L^{\frac{7}{3}}_tL^{\frac{14}{3}}_x}\\ 
\lesssim \frac{1}{R^2}\|h\|_{S(\Gamma_R}+\frac{1}{R^{\frac 32}}\|h\|_{S(\Gamma_R)}^{\frac{4}{3}},
\end{multline*}
where the bound $\|\indic_{\Gamma_R}W\|_{L^{\frac{7}{3}}_tL^{\frac{14}{3}}_x}\lesssim \frac{1}{R^{\frac 32}}$ follows from the bound $W(r)\lesssim 1/r^3$ and explicit computations.
Using Strichartz estimates and the equation satisfied by $h$, we obtain
\begin{multline*}
\|h\|_{S(\Gamma_R)}+\|h\|_{W(\Gamma_R)}\\
\lesssim \|(h_0,h_1)\|_{\HHH(R)}+\|h\|_{W(\Gamma_R)}\|h\|^{\frac{4}{N-2}}_{S(\Gamma_R)}+\frac{1}{R^2}\|h\|_{S(\Gamma_R)}+\frac{1}{R^{3/2}}\|h\|^{\frac 43}_{S(\Gamma_R)}, 
\end{multline*}
where the last term is only necessary when $N=5$. This yields, taking $R$ large so that all the quantities appearing in this inequality are small:
\begin{equation}
\label{bndGammaR}
\|h\|_{S(\Gamma_R)}+\|h\|_{W(\Gamma_R)}\lesssim \|(h_0,h_1)\|_{\HHH(R)}. 
\end{equation} 
Let $h_F$ be the solution of the free wave equation with initial data $(h_0,h_1)$. Going back to the equation satisfied by $h$ and using again Strichartz estimates we obtain, in view of \eqref{bndGammaR},
$$\sup_{t\in \RR} \left\|\vec{h}(t)-\vec{h}_F(t)\right\|_{\HHH(R+|t|)}\lesssim \|(h_0,h_1)\|^{\frac{N+2}{N-2}}_{\HHH(R)}+\frac{1}{R^2}\|(h_0,h_1)\|_{\HHH(R)}.$$
In the case $N=5$, we have used the inequality $ab \leq \frac{3}{4}a^{\frac 43}+\frac{1}{4}b^{4}$ which implies
$$\frac{1}{R^{3/2}}\|(h_0,h_1)\|_{\HHH(R)}^{\frac 43}\lesssim \frac{1}{R^2}\|(h_0,h_1)\|_{\HHH(R)}+\|(h_0,h_1)\|^{\frac{7}{3}}_{\HHH(R)}.$$
Using the fact that $u$ (and thus $h$) is weakly nonradiative, we obtain by the exterior energy bound\eqref{linear_exterior},
\begin{multline*}
\|\pi_{P(R)}^{\bot}(h_0,h_1)\|_{\HHH(R)}\lesssim \sum_{\pm}\lim_{|t|\to\pm\infty} \sqrt{\int_{|x|>R+|t|} |\nabla_{t,x}h_F(t,x)|^2\,dx}\\
\lesssim \frac{1}{R^2}\|(h_0,h_1)\|_{\HHH(R)}+\|(h_0,h_1)\|^{\frac{N+2}{N-2}}_{\HHH(R)}, 
\end{multline*}
which yields
$$\|\pi_{P(R)^{\bot}}(h_0,h_1)\|_{\HHH(R)}\lesssim \frac{1}{R^2}\|\pi_{P(R)}(h_0,h_1)\|_{\HHH(R)}+\|\pi_{P(R)}(h_0,h_1)\|_{\HHH(R)}^{\frac{N+2}{N-2}}.$$
Using the bound \eqref{hsmall} on $\|(h_0,h_1)\|_{\HHH(R)}$, we see that we can drop the last term of this inequality:
\begin{equation}
\label{boundPRh}
\|\pi_{P(R)^{\bot}}(h_0,h_1)\|_{\HHH(R)}\lesssim \frac{1}{R^2}\|\pi_{P(R)}(h_0,h_1)\|_{\HHH(R)}. 
\end{equation} 
We denote by $(\eta_k(R))_{1\leq k\leq m}$ the coordinates of $\pi_{P(R)}(h_0,h_1)$ in the basis $(\Xi_k)_{1\leq k\leq m}$ of $P(R)$. Arguing exactly as in the proof of \eqref{MZ31} in Step \ref{St:asy_prelim}, we obtain for $1\ll R\leq R'\leq 2R$,
\begin{equation}
 \label{MZ31bis}
\sum_{k=1}^m\frac{|\eta_k(R)-\eta_k(R')|}{R^{k-\frac 12}}\lesssim \frac{1}{R^2}\sum_{k=1}^{m}\frac{1}{R^{k-\frac 12}}|\eta_k(R)|.
\end{equation} 
The end of the proof is very close to Step \ref{St:l0}. We first use \eqref{MZ31bis} and the bound 
$$\sum_{k=1}^{m}\frac{1}{R^{k-\frac 12}}|\eta_k(R)|\lesssim \frac{1}{R^{m+\frac{1}{2}}}$$
(that follows from \eqref{hsmall}) to prove 
\begin{equation}
\label{very_fast}
\forall \alpha\geq 1, \; \exists C_\alpha>0,\quad \sum_{k=1}^{m}\frac{1}{R^{k-\frac 12}}|\eta_k(R)|\leq \frac{C_{\alpha}}{R^{\alpha}}. 
\end{equation} 
Indeed, if 
$$\sum_{k=1}^{m}\frac{1}{R^{k-\frac 12}}|\eta_k(R)|\leq \frac{C_{\alpha}}{R^{\alpha}}$$
for some $\alpha \geq m$, then by \eqref{MZ31bis}, for $k\in \llbracket 1,m\rrbracket$, large $R$ and $n\geq 0$ we have
$$ |\eta_k(2^nR)-\eta_k(2^{n+1}R)|\leq C_{\alpha}(2^nR)^{k-\alpha-5/2},$$
which yields, summing up over $n$, that there exists $C_{\alpha}'>0$ such that
$$\sum_{k=1}^{m}\frac{1}{R^{k-\frac 12}}|\eta_k(R)|\leq \frac{C_{\alpha}'}{R^{\alpha+2}}.$$
This proves \eqref{very_fast}. On the other hand, \eqref{MZ31bis} implies, for large $R$, 
$$ \sum_{k=1}^m \frac{1}{(2R)^{k-\frac 12}}|\eta_k(2R)|\geq \frac{1}{2^m}\sum_{k=1}^m \frac{1}{R^{k-\frac 12}}|\eta_k(R)|.$$
By an elementary induction, we obtain that for large $R$, and any natural integer $n$,
$$\sum_{k=1}^m \frac{1}{(2^nR)^{k-\frac 12}}|\eta_k(2^nR)|\geq \frac{1}{2^{nm}}\sum_{k=1}^m \frac{1}{R^{k-\frac 12}}|\eta_k(R)|,$$
which proves by \eqref{very_fast}, letting $R\to\infty$, that $\eta_k(R)=0$ for all $k\in \llbracket 1,m\rrbracket$. Combining with \eqref{boundPRh} we deduce that $(h_0,h_1)(r)=0$ a.e. for large $r$, concluding the proof.
\end{step}
 \end{proof}
\begin{proof}[Proof of Proposition \ref{P:l_no_t}]
 
 If $u_F$ is a solution of the free wave equation, one can deduce from finite speed of propagation and energy conservation
 $$ \forall R>0,\; \forall T\in \RR,\quad \|\vec{u}_F(T)\|_{\HHH(R+|T|)}\leq \|\vec{u}_{F}(0)\|_{\HHH(R)}.$$
 Let $u$ be a weakly nonradiative solution, and $\ell(T)$, $k_0(T)$ defined by Proposition \ref{P:asymptotic_NR}. We can assume $k_0(T)\leq m-1$, since in the case $k_0(T)=m$, by Proposition \ref{P:asymptotic_NR}, the solution $u$ is independent of $t$ on $|x|\geq R+|T|$ for some large $R$.

 We fix $T\in \RR$. By Proposition \ref{P:asymptotic_NR}, we have
 \begin{multline}
 \label{norm_uT}
\|\vec{u}(T)\|_{\HHH(R)}=\ell(T)\|\Xi_{k_0(T)}\|_{\HHH(R)}+o\left( R^{-k_0(T)+\frac 12} \right)\\
= c_{k_0(T)}\ell(T) R^{-k_0(T)+\frac 12}+o\left( R^{-k_0(T)+\frac 12} \right)  
 \end{multline}
as $R\to\infty$, where the constant $c_{k_0(T)}$ is defined in \eqref{normXi}. Furthermore by the small data well-posedness theory, for large $R$,
\begin{multline*}
 \|\vec{u}(T)\|_{\HHH(R+|T|)}\leq \|u_F(T)\|_{\HHH(R+|T|)}+C\|(u_0,u_1)\|_{\HHH(R)}^{\frac{N+2}{N-2}}\\
 \leq \|(u_0,u_1)\|_{\HHH(R)}+C\|(u_0,u_1)\|_{\HHH(R)}^{\frac{N+2}{N-2}}.
\end{multline*}
Combinining with \eqref{norm_uT}, we deduce that for any $T\in \RR$, 
\begin{multline*}
c_{k_0(T)}\ell(T) (R+|T|)^{-k_0(T)+\frac 12}+o\left((R+|T|)^{-k_0(T)+\frac 12}  \right)\\
\leq c_{k_0(0)}\ell(0) R^{-k_0(0)+\frac 12}+o\left(R^{-k_0(0)+\frac 12}\right),\quad R\to \infty.
\end{multline*}
Letting $R\to \infty$, we see that 
$$k_0(T)\geq k_0(0)$$
and 
$$k_0(T)=k_0(0)\Longrightarrow \ell(T)\leq \ell(0).$$
Using the same argument on $(t,x)\mapsto u(T-t,x)$, which is also a weakly radiative solution of \eqref{NLW}, we deduce
$$k_0(0)\geq k_0(T)$$
and 
$$k_0(T)=k_0(0)\Longrightarrow \ell(0)\leq \ell(T).$$
Combining, we obtain as announced, 
$$ k_0(0)=k_0(T)\text{ and }\ell(0)=\ell(T).$$
\end{proof}
\section{Compactly supported initial data}
\label{S:compact}
In this section we prove Proposition \ref{P:nonradi_compact} and Theorem \ref{T:support}. Let $(u_0,u_1)\in \HHH_{\rad}\setminus\{(0,0)\}$ with compact support. Let
Let $u$ be the solution of \eqref{NLW} (or \eqref{LW}) with initial data $(u_0,u_1)$, and 
$$\rho(t)=\min\left\{ \rho\;:\; \int_{\rho}^{+\infty} (\partial_{t,r}u)^2r^{N-1}\,dr=0\right\},\quad \rho(t)=\rho_0.$$
By finite speed of propagation:
$$\forall t\in I_{\max}(u),\quad \rho(t)\leq \rho_0+|t|.$$
By the small data theory, if $\eps$ is small enough, then $u$ is well-defined in $\{r>\rho_0-\eps+|t|\}$ and
\begin{equation}
\label{bound_norm}
\sup_{t\in \RR}\|\vec{u}(t)\|_{\HHH(\rho_0-\eps+|t|)}<\infty. 
\end{equation} 
Furthermore, $u$ satisfies,
$\partial_t^2u-\Delta u=V u$,
where $V(t,r)=|u|^{\frac{4}{N-2}}$, and it follows from \eqref{bound_norm} and the radial Sobolev inequality that
$$r\geq \rho_0-\eps+|t|\Longrightarrow |V(t,r)|\lesssim \frac{C(u)}{r^2},$$
for some constant $C(u)$ depending on $u$. By Proposition \ref{P:support}, if $\rho_0-\eps<R<\rho_0$ (taking a smaller $\eps$ if necessary), the following holds for all $t\geq 0$ or for all $t\leq 0$:
\begin{equation*}
 \int_{R+|t|}^{+\infty} (\partial_{t,r}u(t,r))^2r^{N-1}\,dr\geq \frac{1}{8}\int_{R}^{+\infty} (\partial_{t,r}u(0,r))^2r^{N-1}\,dr>0.
\end{equation*} 
This concludes the proof of Proposition \ref{P:nonradi_compact}, since the preceding lower bound is independent of $t$. We see also that this lower bound imply $\rho(t)\geq R+|t|$, whenever $R<\rho_0$. This yields the conclusion of Theorem \ref{T:support}.

\appendix

\section{Sequences with geometric growth}
\label{A:sequences}
In this appendix we prove Claim \ref{Cl:sequences}. In all the proof $C$ (respectively $\eps$) will denote a large (respectively small) constant, that may change from line to line and is allowed to depend on $r$, $q$, $c_0$ and $\beta$, but not on the other parameters.

We first assume $c_0=0$. Thus we have
$$ \forall n\geq 0,\quad \mu_{n+1}\leq q\mu_n+\nu_0r^n.$$
By a straightforward induction, we obtain
\begin{equation}
\label{geometric}
\forall n\geq 0,\quad \mu_{n}\leq q^n\mu_0+\nu_0 r^{n-1}\sum_{j=0}^{n-1} \left( \frac{q}{r} \right)^{j}. 
\end{equation} 
If $q\neq r$ we deduce
$$\mu_n\leq q^n \mu_0+\nu_0r^{n-1}\frac{1-\left( \frac{q}{r} \right)^n}{1-\frac{q}{r}}.$$
In the case where $q<r$, this yields
$$\mu_n\leq q^n\mu_0+\frac{\nu_0}{1-\frac{q}{r}} r^{n-1}\leq C(\mu_0+\nu_0)r^n.$$
When $q>r$, we have 
$$\mu_n\leq q^n\mu_0+\nu_0r^{n-1}\frac{\left( \frac{q}{r} \right)^n}{\frac{q}{r}-1}\leq C(\mu_0+\nu_0)q^n.$$
Finally, in the case $q=r$, the inequality \eqref{geometric} is
$$ \mu_n\leq r^n\mu_0+\nu_0 n r^n\leq C(\mu_0q^n+n\nu_0r^n).$$

We next treat the general case. We first note that the assumptions \eqref{AZ10} and \eqref{AZ11} imply
\begin{equation}
 \label{AZ20}
 \mu_{n+1} \leq \left(q+c_0\eps^{\beta-1}\right)\mu_n+\nu_0r^n.
\end{equation} 
If $q<r$, we choose $\eps$ so small, so that $(q+c_0\eps^{\beta-1})\leq \frac{q+r}{2}<r$. Using the case $c_0=0$ treated previously, we obtain
$$\mu_k\leq C(\mu_0+\nu_0)r^k.$$
If $q\geq r$, we have $q+c_0\eps^{\beta-1}>r$, and \eqref{AZ20} implies, using the case $c_0=0$, 
\begin{equation}
 \label{AZ21}
 \mu_n\leq C \left(q+c_0\eps^{\beta-1}\right)^{n}(\mu_0+\nu_0).
\end{equation} 
Plugging this into \eqref{AZ10} we deduce,
$$\mu_{n+1}\leq q\mu_n+\nu_0r^n+ C^{\beta}(\mu_0+\nu_0)^{\beta}c_0\left( q+c_0\eps^{\beta-1} \right)^{n\beta}.$$
We choose $\eps$ small, so that $q'=(q+c_0\eps^{\beta-1})^{\beta}<q$, which is possible since $\beta>1$ and $q<1$.  We deduce
$$\mu_{n+1}\leq q\mu_n+C (\nu_0+\mu_0) (\max(q',r))^n.$$
We use again the case $c_0=0$. 
If $r<q$, we obtain \eqref{AZ12}. If $q=r$, we have $\max(q',r)=r=q$ and \eqref{AZ13} follows. \qed

\bibliographystyle{acm}
\bibliography{toto}

\begin{thebibliography}{10}

\bibitem{BaGe99}
{\sc Bahouri, H., and G{\'e}rard, P.}
\newblock High frequency approximation of solutions to critical nonlinear wave
  equations.
\newblock {\em Amer. J. Math. 121}, 1 (1999), 131--175.

\bibitem{BerghLofstrom76BO}
{\sc Bergh, J., and L{\"o}fstr{\"o}m, J.}
\newblock {\em Interpolation spaces: an introduction}, vol.~223 of {\em
  Grundlehren der Mathematischen Wissenschaften [Fundamental Principles of
  Mathematical Sciences]}.
\newblock Springer-Verlag, Berlin, 1976.

\bibitem{Bulut10}
{\sc Bulut, A.}
\newblock Maximizers for the {S}trichartz inequalities for the wave equation.
\newblock {\em Differential Integral Equations 23}, 11/12 (2010), 1035--1072.

\bibitem{BuCzLiPaZh13}
{\sc Bulut, A., Czubak, M., Li, D., Pavlovi{\'c}, N., and Zhang, X.}
\newblock Stability and unconditional uniqueness of solutions for energy
  critical wave equations in high dimensions.
\newblock {\em Comm. Partial Differential Equations 38}, 4 (2013), 575--607.

\bibitem{CoKeSc14}
{\sc C{\^o}te, R., Kenig, C.~E., and Schlag, W.}
\newblock Energy partition for the linear radial wave equation.
\newblock {\em Math. Ann. 358}, 3-4 (2014), 573--607.

\bibitem{DuJiKeMe17b}
{\sc Duyckaerts, T., Jia, H., Kenig, C., and Merle, F.}
\newblock Universality of blow up profile for small blow up solutions to the
  energy critical wave map equation.
\newblock {\em Int. Math. Res. Not. IMRN 2018}, 22 (05 2017), 6961--7025.

\bibitem{DuKeMe11a}
{\sc Duyckaerts, T., Kenig, C., and Merle, F.}
\newblock Universality of blow-up profile for small radial type {II} blow-up
  solutions of the energy-critical wave equation.
\newblock {\em J. Eur. Math. Soc. (JEMS) 13}, 3 (2011), 533--599.

\bibitem{DuKeMe12}
{\sc Duyckaerts, T., Kenig, C., and Merle, F.}
\newblock Universality of the blow-up profile for small type {II} blow-up
  solutions of the energy-critical wave equation: the nonradial case.
\newblock {\em J. Eur. Math. Soc. (JEMS) 14}, 5 (2012), 1389--1454.

\bibitem{DuKeMe13}
{\sc Duyckaerts, T., Kenig, C., and Merle, F.}
\newblock Classification of radial solutions of the focusing, energy-critical
  wave equation.
\newblock {\em Cambridge Journal of Mathematics 1}, 1 (2013), 75--144.

\bibitem{DuKeMe19Pa}
{\sc Duyckaerts, T., Kenig, C., and Merle, F.}
\newblock Exterior energy bounds for the critical wave equation close to the
  ground state.
\newblock Preprint, 2019.

\bibitem{DuKeMe19Pb}
{\sc Duyckaerts, T., Kenig, C., and Merle, F.}
\newblock Soliton resolution for the critical wave equation with radial data in
  odd space dimensions.
\newblock Preprint, 2019.

\bibitem{FaXiCa11}
{\sc Fang, D., Xie, J., and Cazenave, T.}
\newblock Scattering for the focusing energy-subcritical nonlinear
  {S}chr\"odinger equation.
\newblock {\em Sci. China Math. 54}, 10 (2011), 2037--2062.

\bibitem{GiSoVe92}
{\sc Ginibre, J., Soffer, A., and Velo, G.}
\newblock The global {C}auchy problem for the critical nonlinear wave equation.
\newblock {\em J. Funct. Anal. 110}, 1 (1992), 96--130.

\bibitem{GiVe95}
{\sc Ginibre, J., and Velo, G.}
\newblock Generalized {S}trichartz inequalities for the wave equation.
\newblock {\em J. Funct. Anal. 133}, 1 (1995), 50--68.

\bibitem{KeLaLiSc15}
{\sc Kenig, C., Lawrie, A., Liu, B., and Schlag, W.}
\newblock Channels of energy for the linear radial wave equation.
\newblock {\em Adv. Math. 285\/} (2015), 877--936.

\bibitem{KeMe08}
{\sc Kenig, C.~E., and Merle, F.}
\newblock Global well-posedness, scattering and blow-up for the energy-critical
  focusing non-linear wave equation.
\newblock {\em Acta Math. 201}, 2 (2008), 147--212.

\bibitem{LiSo95}
{\sc Lindblad, H., and Sogge, C.~D.}
\newblock On existence and scattering with minimal regularity for semilinear
  wave equations.
\newblock {\em J. Funct. Anal. 130}, 2 (1995), 357--426.

\bibitem{Rodriguez16}
{\sc Rodriguez, C.}
\newblock Profiles for the radial focusing energy-critical wave equation in odd
  dimensions.
\newblock {\em Adv. Differential Equations 21}, 5/6 (05 2016), 505--570.

\bibitem{St77a}
{\sc Strichartz, R.~S.}
\newblock Restrictions of {F}ourier transforms to quadratic surfaces and decay
  of solutions of wave equations.
\newblock {\em Duke Math. J. 44}, 3 (1977), 705--714.

\bibitem{Triebel83}
{\sc Triebel, H.}
\newblock {\em Theory of function spaces}, vol.~78 of {\em Monographs in
  Mathematics}.
\newblock Birkh\"{a}user Verlag, Basel, 1983.

\end{thebibliography}
\end{document}